\newtheorem{theorem}{Theorem}[section]
\newtheorem{corollary}[theorem]{Corollary}
\newtheorem{lemma}[theorem]{Lemma}
\theoremstyle{definition}
\newtheorem{definition}[theorem]{Definition}
\newtheorem{remark}[theorem]{Remark}
\numberwithin{equation}{section}
\newcommand{\R}{\mathbb{R}}
\newcommand{\Cn}{\mathbb{C}^n}
\newcommand{\C}{\mathbb{C}}
\newcommand{\fr}{\frac}
\DeclareMathOperator{\esssup}{ess\, sup}
\begin{document}

\title[Fredholm Toeplitz operators on doubling Fock spaces]{Fredholm Toeplitz operators on doubling Fock spaces}

\author{Zhangjian Hu}
\address{Department of Mathematics, Huzhou University, Huzhou, Zhejiang, China}
\email{huzj@zjhu.edu.cn}

\author{Jani A. Virtanen}
\address{Department of Mathematics and Statistics, University of Reading, England {\rm and} Department of Mathematics and Statistics, University of Helsinki, Finland}
\email{j.a.virtanen@reading.ac.uk {\rm and} jani.virtanen@helsinki.fi}

\keywords{Toeplitz operators, Fredholm properties, doubling Fock spaces, vanishing mean oscillation, quasi-Banach spaces}

\thanks{Hu was supported in part by   the
National Natural Science Foundation of China (12071130, 12171150) and Virtanen was supported in part by Engineering and Physical Sciences Research Council
grant EP/T008636/1.}

\subjclass[2010]{Primary 47B35; Secondary 30H20}

\begin{abstract}
Recently the authors characterized the Fredholmn properties of Toeplitz operators on weighted Fock spaces when the Laplacian of the weight function is bounded below and above. In the present work the authors extend their characterization to doubling Fock spaces with a subharmonic weight whose Laplacian is a doubling measure. The geometry induced by the Bergman metric for doubling Fock spaces is much more complicated than that of the Euclidean metric used in all the previous cases to study Fredholmness, which leads to considerably more involved calculations.
\end{abstract}

\maketitle

\section{Introduction}

Let  $\Omega$ be a   domain of the complex plane ${\C}$, and let $\mu$ be  a positive  Borel measure  on $\Omega$. For $0<p<\infty$, the space $L^p(\Omega, d\mu)$ consists of all Lebesgue measurable functions $f$ on $\Omega$ for which
$$
\|f\|_{L^p(\Omega, d\mu)}=\left(\int_{\Omega}|f(z)|^{p} d\mu(z)\right)^{\frac{1}{p}}<\infty.
$$
We denote by $H({\Omega})$ the family of all holomorphic functions on  $\Omega$.

A positive  Borel measure $\nu$ on ${\C}$ is called doubling if there exists some constant $M>0$ such that
$$
	\nu(D(z, 2r))\leq M\nu(D(z, r))
$$
for $z \in{\C}$ and $r>0$, where $D(z,r)=\left\{w\in{\C}: |w-z|<r \right\}$. The smallest value of $M$ in (1.1) is called the doubling constant of $\nu$.
We denote by $dA$ the Lebesgue area measure on ${\C}$. For a subharmonic function   $\phi$, not identically zero on $\C$,  with  $\nu= \Delta\phi\, dA$    doubling,  we denote by  $\rho(z)$   the positive radius such that $\nu\left(D(z, \rho(z)\right)=1$.
 The function $\rho^{-2}$ can be viewed  as a regularized version of $\Delta\phi $, see \cite{Ch91} and \cite{MMO03}.

For $0< p<\infty$, we write $L^p_{\phi}$ instead of $L^p({\C}, e^{-p\phi} dA)$ for simplicity, that is, we say $f\in L^p_{\phi}$ if $f$ is Lebesgue measurable on $\C$ and
$$
	\|f\|_{p, \phi}^p=\int_{{\C}}|f(z)|^{p}  e^{-p\phi(z)} dA(z)<\infty.
$$
By $L^{\infty}_{\phi}$ we denote the set of all Lebesgue measurable functions $f$ on ${\C}$ for which
$$
	\|f\|_{\infty, \phi}=\esssup\limits_{z\in {\C}}|f(z)|  e^{-\phi(z)} <\infty.
$$
The doubling Fock space $F^p_\phi$  is defined by
$$
 	F^p_{\phi}= L^p_{\phi}\cap H(\C).
$$
Under $\|\cdot\|_{p, \phi}$, $F^p_{\phi}$ is a Banach space when $1\le p\le \infty$  and it is a quasi-Banach  space when $0<p< 1$. It is worth noting that $F^p_\phi$ is of infinite dimension if $\Delta\phi\,dA$ is a nontrivial doubling measure (see, e.g., \cite{Ch91, MMO03, MO09}); for further details on the dimension of weighted Fock spaces, see~\cite{BLY2021}. In a larger framework, the doubling Fock spaces $F^2_\phi$ can be viewed as Bergman spaces with admissible weights in the sense of~\cite{P-W1990}, but this connection will not be exploited in our present work.

The Fock spaces considered in the present paper cover a great deal in the literature. In particular, when $\phi(z)=\frac{\alpha}{2}|z|^2$ with  $\alpha>0$,  $F^2_{\phi}$ is  the classical Fock space $F^p_\alpha$, which has been studied by many authors, see, e.g. \cite{ JPR87, Tu05, Zh12}  and the
references therein.   If $\phi(z)=m\log|z|+|z|^2$, $m$ is a positive integer, $F^2_{\phi}$ is just the Fock-Sobolev space discussed in \cite{CCK12} and \cite{CZ12}. For  $\phi(z)=|z|^m$, $F^2_{\phi}$ is  the weighted Fock space  studied in \cite{SS09} and \cite{SS11}. Doubling Fock spaces also include the so-called generalized Fock spaces $F^p_\phi$, where $0<m<\Delta\phi<M$ (see~\cite{HV19} and the references therein).

Let $K(\cdot, \cdot)$ be the reproducing kernel of the Hilbert space $F^2_{\phi}$.
 The asymptotic behavior of  $K(\cdot, \cdot)$ has  been  studied in  \cite{Ch91} and \cite{MO09} for example.
For $0< p<\infty$ and $z\in {\C}$, set  $k_{p, z}(w)= {K(w, z)}/{\|K(\cdot, z)\|_{p,\phi}} $ to be the normalized Bergman kernel for $F^{p}_{\phi}$. We simply write $k_{z}$ for $k_{2, z}$. It is easy to see that $ k_{z}(w)= {K(w,z)}/{\sqrt{K(z,z)}} $.  With $K(\cdot, \cdot)$, we define an integral operator $P$ (known as the Bergman projection) by
$$
	Pf(z)=\int_{{\C}}K(z,w)f(w)e^{-2\phi(w)}dA(w)
$$
for $z\in \C$. With this integral representation, given a Lebesgue  measurable function $f$ on $\C$, we define the Toeplitz operator $T_f$ and the Hankel operator $H_f$ on $F^p_\phi$, respectively, as
$$
	T_fg =P (fg)
$$
and
$$
	H_fg=  (I-P)(fg) = fg - P(fg)
$$
provided that the corresponding integral $P(fg)$ makes sense, where $I$ is the identity operator on functions. The function $f$ generating Toeplitz and Hankel operators is referred to as the symbol of the operator under consideration.

In the present paper, we investigate the properties of Toeplitz and Hankel operators acting on doubling Fock spaces with symbols of bounded and vanishing mean oscillation. We denote these symbol classes by $BMO^p$ and $VMO^p$, respectively---their precise definitions and basic properties, such as the decompositions $BMO^p = BO \cap BA^p$ and $VMO^p = VO + VA^p$, will be given in the next section.

Our main focus is on the study of Fredholmness of Toeplitz operators on doubling Fock spaces. A bounded operator $T$ on a vector space $X$ is said to be Fredholm if its kernel $\ker T$ and cokernel $X/T(X)$ are both finite-dimensional. Fredholm operators are important for a variety of reasons, such as their role in global analysis, spectral theory and numerical analysis, and in many other branches of mathematics and mathematical physics. The study of the Fredholm properties of Toeplitz operators on Hardy spaces with continuous symbols is the culmination of works of several authors in the 1950s, including Gohberg, Simonenko and Widom, depending on the precise setting of the underlying Hardy space $H^p$. Since then there has been a steady increase in the study of spectral properties of Toeplitz operators on Hardy spaces and Bergman spaces, and in particular the Fredholm properties of Toeplitz operators on these spaces are well understood for several classes of symbols. Regarding Toeplitz operators on Fock spaces, most efforts have been focused on the study of boundedness, compactness, and Schatten class properties, and until recently there were only two articles on their Fredholm properties (see~\cite{BC87, MR1137884}, which both deal with the classical Fock space $F^2$ and bounded symbols of vanishing mean oscillation). In the past three years, a series of works have appeared (see~\cite{AHV, AV18, FH17, HV19}), which has brought the level closer to that of the other two spaces. Indeed, the following characterization was obtained independently in~\cite{AV18} and~\cite{FH17} using different methods, which we discuss in some detail below.

\begin{theorem}\label{classical Fock}
Let $f\in L^\infty \cap VO$, $1<p<\infty$ and $\alpha>0$. Then $T_f$ is Fredholm on the classical Fock space $F^p_\alpha$ if and only if there are positive numbers $\epsilon$ and $\delta$ such that
\begin{equation}\label{e:classic}
	|f(z)| \geq \epsilon\quad {\rm whenever}\ |z|\geq \delta.
\end{equation}
\end{theorem}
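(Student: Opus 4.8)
The plan is to treat the two implications separately, and in both directions the key analytic input is the decomposition $f = g + h$ coming from $VO \subseteq VMO^p$ together with the compactness criterion: $T_h$ is compact on $F^p_\alpha$ whenever $h$ has vanishing mean oscillation and vanishing Berezin transform at infinity. For the easier direction, suppose \eqref{e:classic} holds. Then $1/f$ is bounded outside the disc $|z|<\delta$, and one can modify $f$ on this compact set to obtain $\tilde f \in L^\infty \cap VO$ with $\tilde f$ bounded away from zero on all of $\C$ and $\tilde f = f$ outside a compact set. Since $T_{\tilde f}T_{1/\tilde f} - I = -H^*_{\overline{\tilde f}}H_{1/\tilde f}$ and the Hankel operators with symbols in $VO$ (in fact in $VMO^p$) are compact, $T_{\tilde f}$ is Fredholm; and $T_f - T_{\tilde f} = T_{f - \tilde f}$ is compact because $f - \tilde f$ is compactly supported and bounded, hence in $VMO^p$ with vanishing Berezin transform. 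Therefore $T_f$ is Fredholm.

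For the harder direction, assume $T_f$ is Fredholm on $F^p_\alpha$ but \eqref{e:classic} fails; I aim for a contradiction. The failure of \eqref{e:classic} produces a sequence $z_j \to \infty$ with $f(z_j) \to 0$. The standard device is to test $T_f$ against the normalized reproducing kernels $k_{p,z_j}$, which form a sequence converging weakly to $0$ in $F^p_\alpha$. Using $f \in VO$ one shows $\|T_f k_{p,z_j} - f(z_j) k_{p,z_j}\|_{p,\alpha} \to 0$ — this is where the oscillation control is essential: the Berezin-type estimate bounds this difference by (a constant times) the mean oscillation of $f$ over a ball centered at $z_j$, which tends to $0$. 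Consequently $\|T_f k_{p,z_j}\|_{p,\alpha} \to 0$ while $\|k_{p,z_j}\|_{p,\alpha}=1$, so $T_f$ is not bounded below modulo compacts on any finite-codimensional subspace — more precisely, since the $k_{p,z_j}$ converge weakly to zero, no compact perturbation can restore a lower bound, contradicting the semi-Fredholm property of $T_f$. One must also rule out the cokernel side, but for $1<p<\infty$ this follows by the same argument applied to the adjoint $T_f^* = T_{\bar f}$ on $F^{p'}_\alpha$, since $\bar f$ also lies in $L^\infty \cap VO$ and satisfies the same hypothesis.

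The main obstacle I anticipate is the quantitative estimate $\|T_f k_{p,z} - f(z) k_{p,z}\|_{p,\alpha} \to 0$ as $|z|\to\infty$ for $f\in VO$: this requires good pointwise and integral control of the reproducing kernel $K(w,z)$ of the classical Fock space (its Gaussian off-diagonal decay) together with a careful splitting of the integral into the region near $z$, where $f(w)-f(z)$ is small by vanishing oscillation, and the region far from $z$, where the kernel decay wins. In the $p\neq 2$ setting one additionally needs the correct normalization and $L^p$ mapping properties of the kernel, so a uniform-in-$z$ version of the relevant kernel estimates is what makes the argument go through; everything else is a fairly standard Fredholm/weak-convergence packaging once that estimate is in hand.
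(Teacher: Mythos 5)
Your necessity argument is fine for the classical Fock space with $1<p<\infty$: producing $z_j\to\infty$ with $f(z_j)\to0$, proving $\|T_fk_{p,z_j}-f(z_j)k_{p,z_j}\|_{p,\alpha}\to0$ by splitting near/far from $z_j$ (vanishing oscillation near, Gaussian off-diagonal decay far), and then contradicting semi-Fredholmness via the weak-null normalized kernels is a correct, more elementary route than the one taken here, where Theorem~\ref{classical Fock} is obtained as an immediate consequence of Theorem~\ref{generalized Fock} (and of Theorem~\ref{Fredholm}) through the Berezin transform: by Lemma~\ref{LeBMO1}, $f-\widetilde f\to0$ at infinity for $f\in VO$, so for bounded $f$ condition \eqref{e:classic} is equivalent to \eqref{e:condition}. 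The Berezin-transform formulation (together with the dual-rich quasi-Banach Fredholm theory) is what lets the paper also cover $0<p\le1$ and unbounded $VMO^1$ symbols, which your weak-convergence and adjoint packaging would not reach; but for the statement as given your scheme is legitimate.

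The genuine gap is in your sufficiency step. You claim that $f$ can be modified on a compact set to produce $\tilde f\in L^\infty\cap VO$ (hence continuous, as $VO$ is defined here) with $\tilde f=f$ outside a compact set and $|\tilde f|\ge c>0$ on all of $\C$. This is impossible in general: if $f$ restricted to a large circle $|z|=R$ has nonzero winding number --- e.g.\ $f(z)=z/\max(|z|,1)$, which lies in $L^\infty\cap VO$ and satisfies \eqref{e:classic} --- then no continuous nonvanishing extension to the disk $|z|\le R$ exists, so no such $\tilde f$ exists; and these are exactly the symbols for which $T_f$ is Fredholm with nonzero index, so the failure is not a peripheral case. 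This is precisely the pitfall recorded in Remark~\ref{remark on AHV}. The repair is cheap and is what the proof of Theorem~\ref{Fredholm} does: drop continuity and do not insist on a global nonvanishing modification of $f$; instead take $g=1/f$ (or $1/\widetilde f$) for $|z|\ge R$ and $g=0$ for $|z|<R$. Then $g$ is bounded with vanishing oscillation at infinity, $H_g$ is compact by Theorem~\ref{hankel} together with Remark~\ref{bo-continuity} (continuity is not needed there), and $T_fT_g=T_{fg}-PM_fH_g=I-T_{\chi_R}-PM_fH_g$ is a compact perturbation of the identity, with the analogous identity on the other side; compactness of Toeplitz and Hankel operators with bounded compactly supported symbols (Lemma~\ref{hankel-1}) covers the remaining pieces. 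With that substitution your argument closes; as written, the construction of your modified symbol is the step that fails.
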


\noindent This result naturally leads to the question of whether the preceding theorem remains true for
\begin{enumerate}[(i)]
\item unbounded symbols;

\item small exponents;

\item more general weights.
\end{enumerate}
Very recently, in~\cite{HV19}, we have managed to make significant progress in answering these questions and in particular found a condition that~\eqref{e:classic} should be replaced with as stated in the following theorem.

\begin{theorem} \label{generalized Fock}
Let $f\in VMO^1$ and $0<p<\infty$.  Then the Toeplitz operator $T_f$ is Fredholm on the generalized Fock space $F^p_\phi$ with $0<m<\Delta \phi<M$ if and only if
\begin{equation}\label{e:condition}
	    0<\liminf_{|z|\to \infty} |\widetilde{f}(z)|
	    \quad {\rm and}\quad
	    \limsup_{|z|\to \infty} |\widetilde{f}(z)|< \infty,
\end{equation}
 where $\tilde f$ is the Berezin transform of the symbol $f$.
 \end{theorem}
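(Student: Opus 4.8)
The plan is to prove the two implications separately. Throughout we exploit the fact that for the generalized Fock space ($0<m<\Delta\phi<M$) the function $\rho$ is bounded above and below, so the Bergman metric is uniformly comparable to the Euclidean one and the reproducing kernel obeys a pointwise estimate of the form $|K(z,w)|\lesssim e^{\phi(z)+\phi(w)}\,e^{-c|z-w|}$; in particular the normalized kernels $k_{p,z}$ are concentrated near $z$ at unit scale. Combined with the decomposition $VMO^1=VO+VA^1$ established in the next section, this yields the localization principle
\begin{equation}\label{e:localization}
	\lim_{|z|\to\infty}\bigl\|\,T_f k_{p,z}-\widetilde f(z)\,k_{p,z}\,\bigr\|_{p,\phi}=0,\qquad f\in VMO^1,
\end{equation}
which drives both directions. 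To prove \eqref{e:localization} one writes $T_f k_{p,z}-\widetilde f(z)k_{p,z}=P\bigl((f-\widetilde f(z))k_{p,z}\bigr)$ and splits $f=g+h$ with $g\in VO$ and $h\in VA^1$: for the oscillation part $|g-\widetilde g(z)|$ is uniformly small on each disc $D(z,R)$ once $|z|$ is large, while $k_{p,z}$ carries little $L^p_\phi$-mass outside $D(z,R)$; for the averaging part one bounds $\|P(h k_{p,z})\|_{p,\phi}$ directly via the $L^1$-smallness of $h$ near $z$ together with the kernel estimate. I expect the unbounded part $h\in VA^1$, treated uniformly down to the quasi-Banach range $0<p<1$ (where series representations of $P$ replace duality), to be the principal technical obstacle of the whole argument.

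\emph{Necessity.} Since $f\in VMO^1\subset BMO^1$, the operator $T_f$ is bounded on $F^p_\phi$ and $\widetilde f$ is bounded, so $\limsup_{|z|\to\infty}|\widetilde f(z)|<\infty$ is automatic. Suppose $\liminf_{|z|\to\infty}|\widetilde f(z)|=0$ and pick $z_k\to\infty$ with $\widetilde f(z_k)\to0$; then $\|\widetilde f(z_k)k_{p,z_k}\|_{p,\phi}\to0$, so by \eqref{e:localization} we get $\|T_f k_{p,z_k}\|_{p,\phi}\to0$. If $T_f$ were Fredholm, then $\ker T_f$ is finite-dimensional and $T_f(F^p_\phi)$ is closed of finite codimension, so by the open mapping theorem (valid for the $F$-spaces $F^p_\phi$, $0<p<1$, as well) there are a closed finite-codimensional subspace $Y$ and $c>0$ with $\|T_f g\|_{p,\phi}\ge c\|g\|_{p,\phi}$ for $g\in Y$. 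Write $k_{p,z_k}=y_k+w_k$ with $y_k\in Y$ and $w_k$ in a fixed finite-dimensional complement. Since $k_{p,z_k}\to0$ uniformly on compacta and point evaluations are continuous on $F^p_\phi$, the quasi-norm on the finite-dimensional complement forces $\|w_k\|_{p,\phi}\to0$, hence $\|y_k\|_{p,\phi}$ stays bounded away from $0$; but then $\|T_f y_k\|_{p,\phi}\ge c\|y_k\|_{p,\phi}$ contradicts $\|T_f y_k\|_{p,\phi}\lesssim\|T_f k_{p,z_k}\|_{p,\phi}+\|T_f\|\,\|w_k\|_{p,\phi}\to0$.

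\emph{Sufficiency.} Fix $\epsilon>0$ and $R_0$ with $|\widetilde f(z)|\ge2\epsilon$ for $|z|\ge R_0$, and $C$ with $|\widetilde f|\le C$ on $\C$. Choose $\eta\in C^\infty(\C)$ with $0\le\eta\le1$, $\eta\equiv0$ on $D(0,R_0)$ and $\eta\equiv1$ off $D(0,2R_0)$, and set $g=\eta/\widetilde f$; this is a bounded function, and since standard properties of the Berezin transform give $\widetilde f\in VO$, one checks $g\in VMO^1$, and likewise $gf-1,fg-1\in VMO^1$. The proof now rests on three facts, each ultimately a consequence of \eqref{e:localization} and the kernel estimates: (a) for symbols in $VMO^1$ the Hankel operators are compact on $F^p_\phi$, hence the semicommutators $T_gT_f-T_{gf}$ and $T_fT_g-T_{fg}$, which factor through Hankel operators, are compact; (b) the Berezin transform is approximately multiplicative on $VMO^1$, i.e.\ $\widetilde{uv}-\widetilde u\,\widetilde v\to0$ as $|z|\to\infty$ for $u,v\in VMO^1$, which yields $\widetilde{gf-1}\to0$ and $\widetilde{fg-1}\to0$; (c) a $VMO^1$ symbol whose Berezin transform vanishes at infinity generates a compact Toeplitz operator on $F^p_\phi$, so $T_{gf-1}=T_{gf}-I$ and $T_{fg-1}=T_{fg}-I$ are compact. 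Combining (a)--(c) gives $T_gT_f=I+(\text{compact})$ and $T_fT_g=I+(\text{compact})$, so $T_f$ is Fredholm on $F^p_\phi$. Here (b) is immediate from \eqref{e:localization} applied to $\bar u$ and the compactness of $H_u,H_v$, while (a) and (c) for genuinely unbounded symbols reduce to the same localization estimate, whose $VA^1$ component (and its quasi-Banach form) remains the crux.
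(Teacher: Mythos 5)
Your overall architecture is the same as the paper's (split $VMO^1=VO+VA^1$, absorb the $VA^1$ part by compactness, test $T_f$ on normalized kernels for necessity, and build a regularizer from $1/\widetilde f$ off a disc), but two steps in your necessity half are wrong as written. First, the assertion that $\limsup_{|z|\to\infty}|\widetilde f(z)|<\infty$ is ``automatic'' because $f\in VMO^1\subset BMO^1$ makes $T_f$ and $\widetilde f$ bounded is false: $VO$ (hence $VMO^1$) contains unbounded functions, e.g. $f(z)=\beta(z,0)^{1/2}$, whose Berezin transform is unbounded and for which $T_f$ is not even bounded. The $\limsup$ bound is a genuine part of the necessity and must be extracted from the boundedness of $T_f$ itself; the paper does this by pairing $T_fk_{z,p}$ against $\rho(z)^{2/p-1}k_z$ and using $\|K(\cdot,z)\|_{p,\phi}\simeq\rho(z)^{2/p-2}e^{\phi(z)}$ together with Corollary~\ref{L-infty-norm} (H\"older for $p>1$, the duality of \cite{LZZ19} for $0<p\le 1$). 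Second, the deduction ``$k_{p,z_k}\to0$ uniformly on compacta and point evaluations are continuous, hence $\|w_k\|\to0$'' does not follow: the projection onto the finite-dimensional complement is only norm-continuous, and its coordinate functionals are arbitrary elements of $(F^p_\phi)^*$, not point evaluations. For $0<p\le1$ this is a real obstruction, since by Theorem 5.1 of \cite{LZZ19} the dual is a growth space whose elements $g$ pair with $k_{p,z}$ like $|g(z)|e^{-\phi(z)}\rho(z)^{2-2/p}$, a quantity that is bounded but need not vanish as $z\to\infty$; so $k_{p,z}$ need not tend to zero against general continuous functionals. Your conclusion can be rescued (take the complement to be $\ker T_f$, deduce $\|y_k\|\to0$ from $T_fw_k=0$ and the lower bound on $Y$, then a norm-convergent subsequence of $w_k$ with nonzero limit contradicts locally uniform convergence to zero), but that is a different argument, and in the range $0<p<1$ you also need the dual richness of $F^p_\phi$ (Lemma~\ref{dual rich Fock}) even to complement $\ker T_f$; the paper instead avoids all of this by running the contradiction through Berezin transforms, using a regularizer from Theorem~\ref{dual rich Fredholm} together with Lemma~\ref{fredholm-lamma} and Lemma~\ref{compact-berezin}.

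In the sufficiency half, the ingredient you rely on in step (a)---compactness of Hankel operators (equivalently of the semicommutators $T_gT_f-T_{gf}$) for genuinely unbounded $VMO^1$ symbols, uniformly down to $0<p<1$---is precisely what you flag as ``the crux'' and never prove, and it is not available from the paper's toolkit: Theorem~\ref{hankel} gives compactness of $H_f$ only for $f\in VO$. The paper arranges the proof so that this is never needed: $T_{f_2}$ with $f_2\in VA^1$ is compact because $|f_2|\,dA$ is a vanishing Fock--Carleson measure (Theorem 3.3 of \cite{HL17}); then $f_1\in VO$ is replaced by $\widetilde{f_1}\in VO\cap L^\infty$ via Theorem~\ref{cor-1}; and the regularizer $g$, equal to $1/\widetilde f$ off $D(0,R)$ and $0$ on it, satisfies the exact identity $\widetilde f g=1-\chi_R$, so $T_{\widetilde f}T_g=I-T_{\chi_R}-PM_{\widetilde f}H_g$ and the only Hankel operator appearing is $H_g$ with a bounded symbol of vanishing oscillation (Remark~\ref{bo-continuity} disposes of the lack of continuity). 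With this arrangement your items (b)--(c), and in particular the approximate multiplicativity of the Berezin transform and the $VA^1$ Hankel estimate, are not needed at all. As it stands, your proposal leaves genuine gaps at the $\limsup$ bound, at the finite-dimensional-component step for $0<p\le1$, and at the compactness input for unbounded symbols in the quasi-Banach range.
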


Notice that it is not difficult to see that Theorem~\ref{classical Fock} follows immediately from Theorem~\ref{generalized Fock}. To completely settle the theory of Fredholm Toeplitz operators $T_f$ with symbols in $VMO^1$ on {\it all} Fock spaces $F^p_\phi$ with $f\in VMO^1$, we prove that Theorem~\ref{generalized Fock} remains true even for weights whose Laplacian is a doubling measure. This is our main result---see Theorem~\ref{Fredholm}. Of course, there are Fock spaces $F^p_\phi$ that cannot be viewed as doubling Fock spaces and hence further work related to (iii) remains. However, before such study can proceed, we would first need to characterize bounded (and compact) Toeplitz operators on more general Fock spaces than those with doubling weights. Here the main obstacle is that the Bergman kernel estimates are only known up to the doubling weights. Another natural problem is the study of higher dimensional setting---we discuss this and other types of Fock spaces briefly in Section~\ref{open problems and conclusions}.

We should note that recently, in~\cite{AHV}, an attempt was made to address (iii) when $p=2$ and in particular to prove Theorem~\ref{classical Fock} in the Hilbert space setting $F^2_\phi$ with doubling weights $\phi$. However, while this generalization is true as seen from our main theorem, its proof contains a gap in the construction of a regularizer because Proposition~4.1 of~\cite{AHV} is only known to be true when the vanishing (mean) oscillation is defined with respect to the Euclidean metric as opposed to the Bergman metric---see Remark~\ref{remark on AHV} for further details. A correct argument to this problem is given in the proof of Theorem~\ref{Fredholm}. This further demonstrates the complexities and pitfalls one encounters when dealing with the geometry of doubling Fock spaces.

For further details about the history of the problem, see~\cite{AV18, FH17, HV19} and the references therein. We limit our discussion here to the methodologies that have been used to treat the problem previously.

The first approach for the Hilbert space $F^2_\alpha$ was based heavily on Hilbert space and $C^*$-algebra techniques, the use of the Weyl operator, the Berezin transform, and the Heisenberg group; see~\cite{BC87}. Aside from the Berezin transform, these are not suitable for operators acting on Banach spaces---whether one can employ these techniques for doubling Fock spaces $F^2_\phi$ may be an interesting question but we do not address it in our present work.

Recently developed limit operator techniques due to Hagger and his collaborators offer an efficient way of dealing with Toeplitz operators on Banach spaces that possess sufficiently ``nice bases'' (see~\cite{FH17} and the references therein). Briefly, the idea is to densely embed $\C^n$ into the maximal ideal space $\mathcal M$ of $BUC$ (the space of bounded uniformly continuous functions), and view $\mathcal M\setminus \C^n$ as the boundary of $\C^n$. Then the boundary values of an operator $T$ in the Toeplitz algebra $\mathcal{T}_{p,\alpha}$ at $\mathcal{M}\setminus \C^n$ are obtained by ``shifting'' $T$ to the boundary (for the precise meaning, see~\cite{FH17}). This way, for each $x\in \mathcal{M}\setminus \C^n$, one obtains the so-called limit operator $T_x$. Using limit operator techniques, the following result was proved in~\cite{FH17}.

\begin{theorem}
Let $1<p<\infty$. Then an operator $T$ in the Toeplitz algebra $\mathcal{T}_{p,\alpha}$ on $F^p_\alpha$ is Fredholm if and only if all of its limit operators are invertible.
\end{theorem}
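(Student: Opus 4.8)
The plan is to obtain this as an instance of the limit operator theory for band-dominated operators, transplanted from $\ell^p(\mathbb{Z}^n)$ (in the spirit of Rabinovich--Roch--Silbermann) to $F^p_\alpha$. The starting point is the translation structure: the Weyl operators $W_z$, $z\in\Cn$, act isometrically on $F^p_\alpha$, and relative to these one defines the algebra $A^p$ of \emph{band-dominated operators}---norm limits of finite-propagation operators with respect to a lattice partition of unity on $\Cn$. Two structural facts underpin everything. First, $\mathcal{T}_{p,\alpha}\subseteq A^p$: the Gaussian off-diagonal decay of the reproducing kernel forces bounded Toeplitz operators, and hence the algebra they generate, to be band-dominated. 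Second, every $T\in\mathcal{T}_{p,\alpha}$ is \emph{rich}, meaning that along any sequence $z_k\to\infty$ some subsequence of $W_{-z_{k_j}}TW_{z_{k_j}}$ converges $*$-strongly; the limits, after identifying such sequences with points of $\mathcal{M}\setminus\Cn$, are the limit operators $T_x$. Richness follows from normal-family arguments applied to the generating symbols together with uniform control of their Berezin transforms.

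Given this, the forward implication is routine. If $T$ is Fredholm, pick a parametrix $S\in A^p$ with $TS-I$ and $ST-I$ compact. Compact operators have only the zero limit operator, and the passage $T\mapsto T_x$ respects products on $A^p$, so $T_xS_x=S_xT_x=I$ for every $x\in\mathcal{M}\setminus\Cn$; hence each $T_x$ is invertible.

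The converse is the real content. Suppose every $T_x$ is invertible. The main obstacle is to promote this to \emph{uniform} invertibility, $\sup_x\|T_x^{-1}\|<\infty$; this is precisely where $1<p<\infty$ is used, and it is handled by adapting the Lindner--Seidel treatment of the uniform boundedness problem for band-dominated operators, which requires that $F^p_\alpha$ be $\ell^p$-decomposable along the lattice---available through the atomic decomposition of Fock spaces. Once uniform invertibility is in hand, one builds a band-dominated parametrix \emph{at infinity}: along a sufficiently sparse lattice $\{z_k\}$ with a subordinate partition of unity $\{\psi_k\}$, one patches together translated local inverses of the relevant limit operators. Band-dominatedness of $T$ bounds the commutators $[T,\psi_k]$, uniform invertibility bounds the local errors, and the standard gluing estimate shows that the patched operator $S$ satisfies $TS-I,\ ST-I\in\mathcal{K}(F^p_\alpha)$ modulo an operator concentrated on a bounded set. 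Absorbing that remainder yields Fredholmness of $T$. The crux is thus the migration of the band-dominated operator calculus---especially the uniform boundedness of inverses of limit operators---to the Fock space setting; the remainder is bookkeeping with the Weyl operators and the kernel estimates.
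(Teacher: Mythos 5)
This theorem is not proved in the paper at all: it is quoted as background, with the proof attributed to Fulsche and Hagger~\cite{FH17}, so there is no internal argument to compare against. Your outline is essentially the strategy of that cited work---Weyl/shift operators, the maximal ideal space of $BUC$, the inclusion of $\mathcal{T}_{p,\alpha}$ in the band-dominated operators via the Gaussian kernel decay, richness, and the reduction of the converse to uniform invertibility of the limit operators handled by a Lindner--Seidel type argument transplanted through the lattice/atomic decomposition---so you have identified the right route. Be aware, though, that what you present is a program rather than a proof: the steps you defer (band-dominatedness of the Toeplitz algebra, richness, and above all the automatic uniform boundedness of the inverses, without which the patching of local inverses does not close) are precisely the substantive content of~\cite{FH17}, not routine bookkeeping.
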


As a corollary of the preceding result, Theorem~\ref{classical Fock} was obtained in~\cite{FH17}. Limit operator techniques were further developed in a more abstract setting in~\cite{HS19} and applied to Toeplitz operators acting on the Fock-Sobolev spaces. However, it is currently not known whether the limit operator techniques can be used to treat more general Fock (or Bergman) spaces that do not possess nice bases or explicit formulas for the reproducing kernels. In particular, one of the central objects is the weighted shift operator $C_z : L^p_\alpha \to L^p_\alpha$ defined for each $z\in\C^n$ by
$$
	C_zf(w) = f(w-z) e^{\alpha \langle w,z \rangle - \tfrac{\alpha}2|z|^2} \qquad(w\in\C^n).
$$
Now observe that the shift operators are connected to the Weyl operator, which cannot be defined in the more general setting of Fock spaces $F^2_\phi$, and hence the limit operator techniques do not seem to be suitable for more general weights without further adjustments. We should say that when they can be used, they offer additional benefits, such as the treatment of Toeplitz algebras and certain other more abstract characterizations of Fredholm properties. On the other hand, there appears no way of using limit operators to obtain the usual formula for the index of Fredholm Toeplitz operators on $F^p_\alpha$ (see, e.g., Theorem~20 of~\cite{AV18}).

This brings us to the approach of the present work, which can be described as function theoretic and a more direct way of dealing with the problem at hand. More precisely, we use upper pointwise estimates for the Bergman kernel of the doubling Fock space~\cite{MO09}, a certain auxiliary integral operator whose kernel is related to the reproducing kernel and the Bergman metric, and provide characterizations of bounded and compact Hankel operators, which may be of interest in their own right.

The use of the Bergman metric in the context of the doubling Fock spaces $F^2_\phi$ originates in the work on interpolation and sampling~\cite{MMO03} and no other metric has ever been used to deal with $F^2_\phi$. This is indeed natural and expected because the Bergman metric corresponds to the Bergman kernel of the doubling Fock space $F^2_\phi$ in the same way as the Euclidean metric corresponds to the Bergman kernel of the standard Fock space $F^2_\alpha$. The main difficulty in our work is caused by the geometry induced by the Bergman metric for doubling Fock spaces, which is much more complicated than that of the Euclidean metric used previously in the study of Fredholm properties, and in particular the role of the Euclidean disks $D(z,r)$ is played by the sets of the form $D^r(z) = D(z, r\rho(z))$, where $\rho$ is not constant in general. Finally, we note that the nesting property $F^p_\phi \subset F^q_\phi$ ($p\leq q$) known for weights whose Laplacian is bounded below and above is no longer true for doubling Fock spaces, which is yet another complication that we have to deal with.

\section{Preliminaries}\label{preliminaries}
In this section we recall and prove some key lemmas on the Bergman metric, the reproducing kernel and their related integral and norm estimates.

\textit{Notation.} Throughout, we use $C$ to denote positive constants
whose value may change from line to line but does not depend  on the
functions being considered. We say that two quantities $A$ and $B$ are equivalent, and write $A\simeq B$, if
there exists some positive number $C$ such that $C^{-1 }A\le B \le C A$.

Suppose that $d\nu=\Delta\phi\, dA$ is a doubling measure. Then, for all $z\in \C$ and $r>0$,
$$
	\mu\left(\partial(D(z,r)\right) = \mu(\{z\}) = 0
$$
(so that doubling measures have no mass on circles), and, since $\nu$ is a locally finite nonzero doubling measure on $\C$,
$$
	0<\mu\left(D(z,r)\right) < \infty.
$$
It follows that, for each $z\in\C$,
\begin{equation}\label{e:limit}
	\lim_{r\to\infty} \mu\left(D(z,r)\right) = \infty,
\end{equation}
and the function $r\mapsto \mu\left(D(z,r)\right)$ is an increasing homeomorphism from the interval $(0,\infty)$ onto itself. Thus, for every $z\in \C$, there is a unique positive radius $\rho(z)$ such that
\begin{equation*}
	\mu(D(z,\rho(z)) = 1.
\end{equation*}
We also note that~\eqref{e:limit} implies that $F^p_\phi$ is of infinite dimension.

For $r>0$ and $z\in {\mathbb C}$, we write $D^{r}(z)=D(z, r\rho(z))$,  and $D(z)=D^1(z)$ for short. By  \cite{MMO03},  there exist some absolute constants $\theta$ and $C>0$ such that,
for  $z\in {\mathbb C}$ and  $w\in D^{r}(z)$,
\begin{equation}\label{eq:1}
	\rho(w)\simeq\rho(z) \textrm{ if }r\leq 1,
	\textrm{  and  }\frac{1}{Cr^{\theta}}\leq\frac{\rho(w)}{\rho(z)}
	\leq Cr^{\theta}  \textrm{ if }r>1.
\end{equation}
 Then, for $r>0$, there exists some constant $\alpha>0$ such that
\begin{equation}\label{rho-equivalence}
\frac{1}{\alpha}\rho(z)\leq \rho(w)\leq \alpha\rho(z)
\end{equation}
for $z\in {\mathbb C}$ and  $w\in D^{r}(z)$. From \eqref{rho-equivalence} and the triangle inequality,   for    $r>0$, it follows that there are $m_1=m_1(r)$ and $m_2=m_2(r)$ so that
\begin{equation}\label{rho-equivalence-a}
	D^{r}(z)\subseteq D^{m_1r}(w)  \  \textrm{ and }  \  D^{r}(w)\subseteq D^{m_2r}(z)  \  \textrm{ whenever } w\in D^{r}(z).
\end{equation}
Clearly,  $m_j>1$ for $j=1, 2$,  and further, we have
$$
	\tau= \sup_{0<r\le 1} \left[m_1(r)+ m_2(r)\right]<\infty.
$$

Given a sequence $\{a_j\}_{j=1}^{\infty}\subset{\C}$ and $r>0$,  we call $\{a_j\}_{j=1}^{\infty}$  an $r$-lattice if $\left\{D^{r}(a_j)\right\}_{j=1}^{\infty}$ covers ${\C}$ and the disks of  $\left\{D^{\frac{r}{5}}(a_j)\right\}_{j=1}^{\infty}$ are pairwise disjoint. The existence of $r$-lattice follows from a standard covering lemma---see Theorem~2.1 of~\cite{Ma95} and also~\cite{MMO03}. In addition, if $\{a_j\}_{j=1}^{\infty}$ is an $r$-lattice,
for $m> 0$,  there   exists an integer $N$  such that,  for each $z\in {\C}$, $D^{mr}(z)$ can intersect at most $N$ discs  of  $\left\{D^{r}(a_j)\right\}$.

For $z,w \in {\C}$, the distance $d_\phi$ induced by the metric $\rho^{-2} dz\otimes d\overline z$ is given by
$$
	d_{\phi}(z, w) =\inf\limits_{\gamma}\int_{0}^{1}\frac{|\gamma'(t)|}{\rho(\gamma(t))}dt,
$$
where the infimum is taken over all piecewise $C^{1}$ curves $\gamma: [0,1]\rightarrow {\C}$ with $\gamma(0)=z$ and $\gamma(1)=w$.  It is known that the Bergman metric 
$$
	\fr {\partial^2 \log K(z, z)}{\partial z\partial {\overline z}}dz\otimes d\overline z,
$$
which is given by the solution to the extremal problem
$$
	\frac{\sup\{ |f'(x)| : f\in F^2_\phi, f(z) = 0, \|f\|_{2,\phi} = 1\}}{\sqrt{K(z,z)}},
$$
is comparable to the metric $\rho^{-2} dz\otimes d\overline z$ (see page 355 of \cite{CO11} for further details). We write $\beta(\cdot, \cdot)$ for the Bergman distance on $\C$.  Therefore,  there are two positive constants $C_1$ and $C_2$ such that
\begin{equation}\label{equi-Bergman-dist}
	C_1 d_\phi(z, w)\le \beta(z, w)\le C_2 d_\phi(z, w).
\end{equation}
for $z, w \in \C$. We refer to~\cite{Krantz1, Krantz2} for further details on the basic properties of the Bergman metric and distance.

We need the following estimates for the Bergman distance $\beta$, instead of $d_\phi$, because $\beta$ is more suitable for the analysis of operators on Fock spaces.

\begin{lemma}\label{distance-d}
There exists $\delta \in (0,1)$  such that for every $r>0$ there exists $C_r>0$ such that
$$
	C_r^{-1}\frac{|z-w|}{\rho(z)}\leq \beta (z, w)
	\leq C_r\frac{|z-w|}{\rho(z)},\qquad w\in D^{r}(z)
$$
and
$$
	C_r^{-1}\left(\frac{|z-w|}{\rho(z)}\right)^{\delta}\leq \beta(z, w)
	\leq C_r\left(\frac{|z-w|}{\rho(z)}\right)^{2-\delta},\qquad w \in {\C}\backslash D^{r}(z).
$$
\end{lemma}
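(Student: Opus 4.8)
The plan is to derive both chains of inequalities from the comparability \eqref{equi-Bergman-dist} of $\beta$ with the intrinsic distance $d_\phi$ associated to the conformal metric $\rho^{-2}\,dz\otimes d\overline z$, so that the whole problem reduces to estimating $d_\phi(z,w)$ using only the control \eqref{rho-equivalence} on $\rho$. First I would treat the near-diagonal case $w\in D^r(z)$. For the upper bound one takes the straight segment $\gamma(t) = z + t(w-z)$ as a competitor; by \eqref{rho-equivalence} (applied with a slightly larger radius so that the whole segment stays in a region where $\rho$ is comparable to $\rho(z)$) one has $\rho(\gamma(t))\geq \alpha^{-1}\rho(z)$ along the path, and integrating gives $d_\phi(z,w)\leq \alpha\,|z-w|/\rho(z)$. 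For the lower bound, any competing curve $\gamma$ from $z$ to $w$ either stays inside $D^{m}(z)$ for a suitable $m=m(r)$, in which case $\rho(\gamma(t))\leq \alpha\rho(z)$ gives $\int |\gamma'|/\rho(\gamma)\geq (\alpha\rho(z))^{-1}\,\mathrm{length}(\gamma)\geq (\alpha\rho(z))^{-1}|z-w|$; or $\gamma$ exits $D^m(z)$, in which case its length is at least $\sim \rho(z)$ while $|z-w|\lesssim \rho(z)$, so the desired bound still holds after adjusting constants. Combining and invoking \eqref{equi-Bergman-dist} yields the first displayed estimate.

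Next I would handle the far-diagonal case $w\in\C\setminus D^r(z)$, which is where the genuine difficulty lies and where the exponents $\delta$ and $2-\delta$ enter. The mechanism is that $\rho$ is not constant but, by \eqref{eq:1}, satisfies the polynomial growth/decay bound $\rho(w)/\rho(z)\asymp |z-w|^{\pm\theta}$ for $|z-w|$ large compared with $\rho(z)$ — more precisely $C^{-1}t^{-\theta}\leq \rho(w)/\rho(z)\leq Ct^{\theta}$ when $w$ lies on the circle of radius $t\rho(z)$ about $z$. To get the \emph{upper} bound on $d_\phi$, integrate the radial metric along the segment from $z$ to $w$: writing $s = |w-z|/\rho(z)$, one has $d_\phi(z,w)\lesssim \int_0^{s} \frac{\rho(z)\,dt}{\inf_{|\zeta - z| = t\rho(z)}\rho(\zeta)} \lesssim \int_0^{s}(1+t)^{\theta}\,dt \lesssim s^{1+\theta}$, giving the exponent $2-\delta$ with $\delta := 1-\theta$ (after arranging, by the standard doubling-measure estimate, that $\theta<1$; the value of $\theta$ from \eqref{eq:1} can be taken strictly less than $1$, or one absorbs the linear part for small $t$ into the constant). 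For the \emph{lower} bound, one shows that any curve from $z$ to $w$ must pass through all the intermediate annuli $\{t\rho(z)\leq |\zeta - z|\leq (t+1)\rho(z)\}$, and on the annulus at radius $\sim t\rho(z)$ the metric density is at least $\sim (t\rho(z))^{-1}\cdot t^{-\theta}$ (using $\rho(\zeta)\lesssim \rho(z)t^{\theta}$ there); summing the radial contributions gives $d_\phi(z,w)\gtrsim \int_1^{s} t^{-1-\theta}\,dt \cdot$(const)$\, + \,$(const)$\, \gtrsim s^{1-\theta} = s^{\delta}$ for $s$ large, and a separate elementary argument covers bounded $s$ (i.e. $w$ just outside $D^r(z)$), where $\beta(z,w)\asymp |z-w|/\rho(z)$ anyway by continuity with the first estimate. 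Applying \eqref{equi-Bergman-dist} once more converts these into the stated bounds for $\beta$.

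The main obstacle I anticipate is the \emph{lower} bound in the far case: one must rule out the possibility that a clever curve exploits regions where $\rho$ is unexpectedly large to make $\int|\gamma'|/\rho(\gamma)$ small. The honest way to do this is a layer-cake / annular-crossing argument — decompose $\C$ into dyadic-in-$\rho(z)$ annuli centered at $z$, use \eqref{eq:1} to get a uniform lower bound for the metric density on each annulus, and note that any path from $z$ to $w$ has, on each annulus it crosses, Euclidean length at least the annulus width; the subtlety is that the bound in \eqref{eq:1} is only for $r>1$ and controls $\rho(\zeta)$ from \emph{above} on large circles, which is exactly what is needed to bound the density from below, so this goes through, but the bookkeeping with the two regimes $r\leq 1$ and $r>1$ in \eqref{eq:1}, and the matching of constants $C_r$ as $r$ varies, requires care. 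A secondary point is verifying that one may take a single $\delta\in(0,1)$ working for all $r$ simultaneously: this follows because $\delta$ depends only on the exponent $\theta$ in \eqref{eq:1}, which is absolute, while all $r$-dependence is pushed into the constants $C_r$.
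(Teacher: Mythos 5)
Your proposal and the paper diverge in an instructive way: the paper does not prove these estimates at all, but simply quotes the completely analogous statement for $d_\phi$ from Lemma~4 of \cite{MMO03} and then transfers it to $\beta$ via the comparability \eqref{equi-Bergman-dist}. You keep exactly that transfer step but replace the citation by a direct proof of the $d_\phi$ estimates; the argument you sketch (straight-segment competitor plus an ``escape'' alternative near the diagonal; radial integration for the far upper bound; annulus-crossing for the far lower bound) is in substance the argument of \cite{MMO03} itself, so the route is sound and more self-contained than the paper's one-line reduction.

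The one genuine soft spot is your handling of the exponent $\theta$. As stated in \eqref{eq:1}, $\theta$ is merely ``some absolute constant,'' and your far-field computation needs $\theta<1$ essentially: the radial upper bound gives exponent $1+\theta$, and the crossing sum is $\sum_{1\le j\le s} j^{-\theta}$, which grows like $s^{1-\theta}$ only if $\theta<1$ (for $\theta=1$ you get $\log s$, and for $\theta>1$ a bounded quantity), so both $\delta=1-\theta\in(0,1)$ and the lower bound disappear otherwise. Your fallback ``absorb the linear part for small $t$ into the constant'' does not address this at all. The claim $\theta<1$ is true, but it is itself a theorem about doubling measures that must be justified (and is precisely what \cite{MMO03} prove and use): a nontrivial doubling measure $\nu$ on $\C$ charges every disk and $\C$ is unbounded, so $\nu$ satisfies reverse doubling, $\nu(D(z,tr))\ge C^{-1}t^{a}\nu(D(z,r))$ for $t\ge1$ with some $a>0$, alongside the forward bound with exponent $b=\log_2 M$; comparing the masses of $D(z,\rho(z))$, $D(w,\rho(w))$ and $D(w,C|z-w|)$ then yields $\rho(w)\lesssim\rho(z)\bigl(|z-w|/\rho(z)\bigr)^{1-a/b}$, i.e.\ \eqref{eq:1} with $\theta=1-a/b<1$. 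With that ingredient supplied (or quoted in its sharp form from \cite{MMO03}), your argument closes, including the matching regime $r\le|z-w|/\rho(z)\le1$ for small $r$, which, as you note, is covered by the near-diagonal estimate applied with an enlarged radius and constants absorbed into $C_r$.
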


\begin{proof}
A completely analogous result was proved for $d_\phi(\cdot, \cdot)$ in~\cite[Lemma~4]{MMO03}. Using \eqref{equi-Bergman-dist}, it can be seen that all remain valid if $d_\phi(z, w)$ is replaced by $\beta(z, w)$.
\end{proof}

For $z\in \C$ and $r>0$, we set $B(z, r)= \{w: \beta(w, z)<r\}$. The estimate in the following technical lemma will be very useful in our analysis.

\begin{lemma}\label{Lma:2.1}  For $p, \epsilon>0$ and $k,l\in\R$, there is some $C>0$ such that
\begin{equation}\label{rho-integral}
	\int_{ {\mathbb C}}\left(\beta(z, w)+1\right)^l  \rho(w)^{k}e^{ - p\left(\frac{|z-w|}{\rho(z)}\right)^{\epsilon} }dA(w)\leq C\rho(z)^{k+2} \qquad\textrm{ for }\,  z\in {\mathbb C},
\end{equation}
and
\begin{equation}\label{rho-integral-m}
	\lim_{R\to \infty} \sup_{z\in \C} \fr 1 {\rho(z)^{k+2}}\int_{ {\mathbb C}\setminus B(z, R)}\left(\beta(z, w)+1\right)^l  \rho(w)^{k}e^{ - p\left(\frac{|z-w|}{\rho(z)}\right)^{\epsilon} }dA(w)=0.
\end{equation}
\end{lemma}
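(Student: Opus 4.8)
The plan is to reduce both statements to a single decay estimate by splitting $\C$ into the "Bergman annuli" around $z$ determined by an $r$-lattice, and then to use Lemma~\ref{distance-d} to convert Bergman distances into Euclidean distances on each annulus so that the Gaussian-type factor $e^{-p(|z-w|/\rho(z))^\epsilon}$ can be exploited. First I would fix $z\in\C$ and decompose
$$
	\C = D^1(z) \cup \bigcup_{j=0}^\infty \big(D^{2^{j+1}}(z)\setminus D^{2^j}(z)\big),
$$
so that on the $j$th shell we have $2^j\rho(z)\le |z-w| < 2^{j+1}\rho(z)$. On this shell the second inequality of Lemma~\ref{distance-d} gives $\beta(z,w)+1 \le C_r\big(2^{j+1}\big)^{2-\delta}+1 \le C\, 2^{j(2-\delta)}$ (for $j$ large; the bounded terms are harmless), while \eqref{eq:1} controls the ratio $\rho(w)/\rho(z)$ by $C\,2^{j\theta}$, so $\rho(w)^k \le C\,\rho(z)^k 2^{j|k|\theta}$. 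Finally the exponential factor contributes $e^{-p\,2^{j\epsilon}}$, which decays faster than any power of $2^j$. Summing over $j$, together with the trivial bound $\int_{D^{2^{j+1}}(z)\setminus D^{2^j}(z)}dA \le \pi (2^{j+1}\rho(z))^2 = C\rho(z)^2 2^{2j}$, yields
$$
	\int_{\C} (\beta(z,w)+1)^l \rho(w)^k e^{-p(|z-w|/\rho(z))^\epsilon}\,dA(w)
	\le C\rho(z)^{k+2}\sum_{j=0}^\infty 2^{j(2+2|l|+|k|\theta)} e^{-p\,2^{j\epsilon}},
$$
and the series converges to a finite constant independent of $z$. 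This proves \eqref{rho-integral}. (The contribution of $D^1(z)$ itself is bounded by $C\rho(z)^{k+2}$ using \eqref{rho-equivalence} and the trivial bound on $\beta$ there.)

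For \eqref{rho-integral-m} I would observe that $B(z,R)$ contains $D^{r}(z)$ for a suitable $r=r(R)$ with $r(R)\to\infty$ as $R\to\infty$; more precisely, by the first inequality of Lemma~\ref{distance-d}, $\beta(z,w)\le C_1 |z-w|/\rho(z)$ on $D^1(z)$, and by the lower bound in the second inequality, $\beta(z,w)\ge C_r^{-1}(|z-w|/\rho(z))^\delta$ outside, so $\beta(z,w)<R$ forces $|z-w|/\rho(z)\le C\,R^{1/\delta}$, i.e. $w\in D^{CR^{1/\delta}}(z)$. Hence
$$
	\C\setminus B(z,R) \subseteq \C\setminus D^{cR^{1/\delta}}(z)
$$
for some $c>0$, and the integral over $\C\setminus B(z,R)$ is dominated by the tail of the very same convergent series above, namely $C\rho(z)^{k+2}\sum_{j\ge J(R)} 2^{j(2+2|l|+|k|\theta)}e^{-p\,2^{j\epsilon}}$ where $2^{J(R)}\simeq R^{1/\delta}$. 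Since this tail tends to $0$ as $R\to\infty$ uniformly in $z$, dividing by $\rho(z)^{k+2}$ gives \eqref{rho-integral-m}.

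The main obstacle I anticipate is bookkeeping the interaction between the two competing growth factors $\rho(w)^k$ and $(\beta(z,w)+1)^l$ against the exponential: the estimate \eqref{eq:1} only gives $\rho(w)/\rho(z)\le Cr^\theta$ with a genuine power $\theta$ (which can be large), so one must be careful that $e^{-p\,2^{j\epsilon}}$ really does beat $2^{j(2+2|l|+|k|\theta)}$ for \emph{all} admissible $k,l$ — which it does, since $\epsilon>0$ is fixed and the exponential of an exponential dominates any geometric series. A secondary subtlety is the case $k<0$, where $\rho(w)^k$ is largest when $w$ is \emph{far} from $z$ (since $\rho(w)$ can be large there), but this is precisely the regime killed by the exponential, so no separate argument is needed; one only has to use $|k|$ rather than $k$ in the exponent bookkeeping. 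The passage between $\beta$ and $d_\phi$ is free via \eqref{equi-Bergman-dist}, and the replacement of $B(z,R)$ by a dilate $D^{r}(z)$ is the only place where the non-Euclidean geometry enters, but Lemma~\ref{distance-d} handles it cleanly.
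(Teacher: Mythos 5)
The first half of your argument is correct and gives \eqref{rho-integral}: the dyadic shells $D^{2^{j+1}}(z)\setminus D^{2^j}(z)$, with $(\beta(z,w)+1)^l$ controlled by Lemma~\ref{distance-d}, $\rho(w)^k$ by \eqref{eq:1}, the area bound, and the super-exponential factor $e^{-p2^{j\epsilon}}$, is essentially a hands-on version of the paper's proof. (The paper instead absorbs $(\beta(z,w)+1)^l$ pointwise into the exponential, using $\sup_{r\ge 1} r^{l(2-\delta)}e^{-\frac p2 r^{\epsilon}}<\infty$, and then quotes Lemma~2.1 of \cite{HL17} for $\int_\C \rho(w)^k e^{-\frac p2(|z-w|/\rho(z))^{\epsilon}}dA(w)\le C\rho(z)^{k+2}$; your annular decomposition simply reproves that cited estimate, and the paper itself uses unit annuli $D^{j+1}(z)\setminus D^j(z)$ for the tail, so the two routes are interchangeable here.)

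There is, however, a genuine flaw in your proof of \eqref{rho-integral-m}: the set inclusion is derived in the wrong direction. From the lower bound $\beta(z,w)\ge C^{-1}\left(|z-w|/\rho(z)\right)^{\delta}$ you correctly deduce that $\beta(z,w)<R$ forces $|z-w|/\rho(z)\le CR^{1/\delta}$, i.e.\ $B(z,R)\subseteq D^{CR^{1/\delta}}(z)$; but what you need is $\C\setminus B(z,R)\subseteq \C\setminus D^{r(R)}(z)$, i.e.\ $D^{r(R)}(z)\subseteq B(z,R)$, and that is the \emph{reverse} containment, which does not follow from what you proved. Moreover, with the exponent $1/\delta$ the claimed inclusion is not available from Lemma~\ref{distance-d} at all: on $D^{cR^{1/\delta}}(z)$ the lemma only gives $\beta(z,w)\le C\left(|z-w|/\rho(z)\right)^{2-\delta}\le C'R^{(2-\delta)/\delta}$, which exceeds $R$ for large $R$ since $(2-\delta)/\delta>1$. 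The correct step, which is the one the paper uses, comes from the \emph{upper} half of Lemma~\ref{distance-d}: if $\beta(z,w)\ge R$ with $R$ large, then $w\notin D^1(z)$ and $R\le C_1\left(|z-w|/\rho(z)\right)^{2-\delta}$, hence $\C\setminus B(z,R)\subseteq \C\setminus D^{(R/C_1)^{1/(2-\delta)}}(z)$. Once you replace your inclusion by this one (so $2^{J(R)}\simeq R^{1/(2-\delta)}$ rather than $R^{1/\delta}$), your tail-of-the-series argument goes through uniformly in $z$, since all constants in Lemma~\ref{distance-d} and \eqref{eq:1} are independent of $z$; so the defect is a local, fixable slip rather than a wrong overall strategy.
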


\begin{proof}
Since $ \beta(z, w)+1\ge 1 $, to prove (\ref{rho-integral}) and (\ref{rho-integral-m}), we may assume $l>0$. We claim first that for $z,w\in \C$, we have
\begin{equation}\label{basic-integral}
	\left(\beta(z, w)+1\right)^l \le C  e^{\fr p 2\left(\frac{|z-w|}{\rho(z)}\right)^{\epsilon}}.
\end{equation}
Observe that, for  $w\in \C \backslash D(z)$ (i.e., $|w-z|\geq \rho(z)$),  by Lemma~\ref{distance-d}, we get
\begin{align*}
	\beta(z, w)^l  e^{-\fr p 2\left(\frac{|z-w|}{\rho(z)}\right)^{\epsilon}}
	&\le C \left(\frac{|z-w|}{\rho(z)}\right)^{(2-\delta)l} e^{- \fr p 2\left(\frac{|z-w|}{\rho(z)}\right)^{\epsilon}}\\
	&\le C  \sup_{\left \{\xi : |z-\xi|/\rho(z) \ge 1  \right \}} 
     \left(\frac{|z-\xi|}{\rho(z)}\right)^{(2-\delta)l} e^{- \fr p 2\left(\frac{|z-\xi|}{\rho(z)}\right)^{\epsilon}},
\end{align*}
where the positive constants $C$ are independent of $z, w$. Thus,
\begin{equation}\label{y-estimate}
	\sup_{w\in \C\setminus D(z)} \beta(z, w)^l  e^{-\fr p 2\left(\frac{|z-w|}{\rho(z)}\right)^{\epsilon}}
	\le C \sup_{r\ge 1 }  r^{l(2-\delta)} e^{- \fr p 2 r^\epsilon}= C.
\end{equation}
In addition, it is trivial that $\sup_{w\in   D(z)} \beta(z, w)^l  e^{-\fr p 2\left(\frac{|z-w|}{\rho(z)}\right)^{\epsilon}}\le C$. Therefore,
$$
	\sup_{z, w\in \C}  \left(\beta(z, w)+1\right)^l e^{-\fr p 2\left(\frac{|z-w|}{\rho(z)}\right)^{\epsilon}}   \le  C\sup_{z, w\in \C} \left(\beta(z, w)^l+1 \right) e^{-\fr p 2\left(\frac{|z-w|}{\rho(z)}\right)^{\epsilon}}\\
  <\infty,
$$
which gives (\ref{basic-integral}). Now for all $z\in \C$, (\ref{basic-integral}) and Lemma 2.1 of \cite{HL17} imply
\begin{align*}
	\int_{ {\mathbb C}}\left(\beta(z, w)+1\right)^l  \rho(w)^{k}e^{ - p\left(\frac{|z-w|}{\rho(z)}\right)^{\epsilon} }dA(w)
	&\leq C \int_{ {\mathbb C}}   \rho(w)^{k}e^{ -\fr 12  p\left(\frac{|z-w|}{\rho(z)}\right)^{\epsilon} }dA(w)\\
	&\leq C \rho(z)^{k+2}
\end{align*}
and hence (\ref{rho-integral}) follows.

Choosing $r=1$ in Lemma~\ref{distance-d}, for $\beta(w,z)\geq R$ with $R$ sufficiently large, we see that the following inclusion holds:
$$
   	{\mathbb C}\setminus B(z, R)\subset  {\mathbb C}\setminus D^{(R/{C_1})^{\fr 1{2-\delta}}}(z).
$$
We write $N=N(R)$ for the integer part of $(R/{C_1})^{\fr 1{2-\delta}}$. Then by  (\ref{y-estimate}) and (\ref{eq:1}), we get
\begin{align*}
	&\int_{ {\mathbb C}\setminus B(z, R)}\left(\beta(z, w)+1\right)^l  \rho(w)^{k}e^{ - p\left(\frac{|z-w|}{\rho(z)}\right)^{\epsilon} }dA(w)\\
	& \le C    \int_{{\mathbb C}\setminus D^{(R/{C_1})^{\fr 1{2-\delta}}}(z)}
    \rho(w)^{k}e^{ - \fr p 2 \left(\frac{|z-w|}{\rho(z)}\right)^{\epsilon} }dA(w)\\
	& \le C    \int_{{\mathbb C}\setminus D^{N}(z)}
    \rho(w)^{k}e^{ - \fr p 2 \left(\frac{|z-w|}{\rho(z)}\right)^{\epsilon} }dA(w)\\
	&\le C  \sum_{j=N}^\infty  \int_{ D^{j+1}(z) \backslash D^{j}(z)}
    \rho(w)^{k}e^{ - \fr p 2 \left(\frac{|z-w|}{\rho(z)}\right)^{\epsilon} }dA(w)\\
	& \le C \rho(z)^{k+2} \sum_{j=N}^\infty (j+1)^{|k|\theta +2} e^{-\fr p2 j^\epsilon}.
\end{align*}
This gives  (\ref{rho-integral-m}) because  $\sum_{j=1}^\infty (j+1)^{|k|\theta +2} e^{-\fr p2 j^\epsilon}<\infty$ and $N(R)\to \infty$ as $R\to \infty$.
The proof is completed.
\end{proof}

In 2009, Marzo and  Ortega-Cerd\`{a} \cite{MO09} obtained the following pointwise estimates on the Bergman kernel.

\begin{lemma}\label{basic-est}
{\rm (A)} There exist $C, \epsilon>0$ such that
\begin{equation}\label{eq:02}
|K(w, z)|\leq C\frac{e^{\phi(w)+\phi(z)}}{\rho(w)\rho(z)}e^{ - \left(\frac{|z-w|}{\rho(z)}\right)^{\epsilon} },\qquad w,z\in {\C}.
\end{equation}

{\rm (B)} There exists  some $r_0>0$ such that for $z\in\mathbb{C}$ and  $w\in D^{r_0}(z)$, we have
\begin{equation}\label{eq:03}
	|K(w, z)|\simeq\frac{e^{\phi(w)+\phi(z)}}{\rho(z)^2}.
\end{equation}
\end{lemma}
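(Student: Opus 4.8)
The estimates are due to Marzo and Ortega-Cerd\`a~\cite{MO09} (building on Christ~\cite{Ch91}), and the plan is to reconstruct the two-pillar argument behind them: H\"ormander's $L^2$-estimate for the $\bar\partial$-operator against subharmonic weights, and the sub-mean value inequality combined with the bounded oscillation of $\phi$ over each disk $D(z)=D(z,\rho(z))$. First I would record this oscillation bound, $\sup_{D(z)}\phi-\inf_{D(z)}\phi\le C$ with $C$ absolute, which follows from $\nu(D(z,\rho(z)))=1$ and the doubling property (hence $\nu(D(z,2\rho(z)))\le M$) via a Green-potential estimate for the local potential part of $\phi$ and Harnack's inequality for its harmonic part; this is the workhorse for converting weighted integral norms into pointwise values.

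\textbf{Step 1: the diagonal estimate $K(z,z)\simeq e^{2\phi(z)}/\rho(z)^2$.} For the upper bound I would use the extremal description $K(z,z)=\sup\{|f(z)|^2:f\in F^2_\phi,\ \|f\|_{2,\phi}\le1\}$: for such $f$, applying the sub-mean value inequality to the subharmonic function $|f|^2$ on $D(z)$ and then the oscillation bound on $\phi$ gives $|f(z)|^2e^{-2\phi(z)}\le C\rho(z)^{-2}\int_{D(z)}|f|^2e^{-2\phi}\,dA\le C\rho(z)^{-2}$. For the lower bound I would solve $\bar\partial v = g$, where $g=\bar\partial(\chi_z)\cdot(\text{normalized bump at }z)$ with $\chi_z$ supported off $D(z)$ and $\|v\|_{2,\phi}\lesssim1$, to correct the bump into an entire $f\in F^2_\phi$ with $|f(z)|e^{-\phi(z)}\gtrsim\rho(z)^{-1}$. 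Since $|K(w,z)|\le\sqrt{K(z,z)K(w,w)}$ by Cauchy--Schwarz, Step~1 already delivers \eqref{eq:02} \emph{without} the exponential factor.

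\textbf{Step 2: part (B).} With $r_0$ to be chosen small and $w\in D^{r_0}(z)$, I would combine the crude bound from Step~1 with a Cauchy estimate for the holomorphic function $\zeta\mapsto K(\zeta,z)$ on a disk of radius $\simeq\rho(z)$ to get $|K(w,z)-K(z,z)|\le C\,\tfrac{|w-z|}{\rho(z)}\,e^{2\phi(z)}/\rho(z)^2$, using \eqref{rho-equivalence} and the oscillation bound to replace $e^{\phi(\zeta)}$ by $e^{\phi(z)}$. Shrinking $r_0$ so that this is at most $\tfrac12K(z,z)$ yields $|K(w,z)|\simeq e^{2\phi(z)}/\rho(z)^2\simeq e^{\phi(w)+\phi(z)}/\rho(z)^2$ on $D^{r_0}(z)$, which is \eqref{eq:03}.

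\textbf{Step 3: the decay factor in (A), and the main obstacle.} Fix $z,w$, put $t=|z-w|/\rho(z)$, and assume $t$ large. My plan is to rerun the H\"ormander construction against the \emph{perturbed} weight $\phi+\sigma$, where $\sigma=\sigma_z\ge0$ is smooth, vanishes on $D(z)$, grows like $\min\{(|\xi-z|/\rho(z))^\epsilon,\,t^\epsilon\}$, and --- the delicate point --- is chosen with $\epsilon>0$ and the implied constants small enough that $\phi+\sigma$ is still subharmonic; this is feasible because $\Delta\phi=\rho^{-2}$ dominates $\Delta\sigma$ away from $D(z)$, by the regularity estimates \eqref{eq:1} for $\rho$. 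Taking a cutoff $\chi$ equal to $1$ near $w$, supported in $D^{r_0}(w)$, with $|\bar\partial\chi|\lesssim\rho(w)^{-1}$, I would solve $\bar\partial u=(\bar\partial\chi)k_z$ with the weighted estimate $\int_\C|u|^2e^{-2\phi-2\sigma}\,dA\le C\int_\C|(\bar\partial\chi)k_z|^2\rho^2e^{-2\phi-2\sigma}\,dA$; the right side lives where $|\xi-z|\simeq|z-w|$, so $\sigma\gtrsim t^\epsilon$ there while $|k_z|^2e^{-2\phi}\lesssim\rho^{-2}$ by Step~1, making it $\lesssim e^{-ct^\epsilon}$. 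Then $F=\chi k_z-u$ is entire with $\|F\|_{2,\phi}\lesssim1$, and the sub-mean value inequality applied to the holomorphic $u$ near $w$ (where $\sigma\gtrsim t^\epsilon$) gives $|u(w)|e^{-\phi(w)}\lesssim e^{-ct^\epsilon}\rho(w)^{-1}$; combining this with the reproducing identity and part (B), and iterating over a dyadic sequence of Bergman radii to globalize the local gain, yields $|k_z(w)|e^{-\phi(w)}\lesssim e^{-ct^\epsilon}\rho(w)^{-1}$, and multiplying by $\sqrt{K(z,z)}\simeq e^{\phi(z)}/\rho(z)$ gives \eqref{eq:02}. I expect Step~3 to be the hard part: the tension between making $\sigma$ large enough to produce genuine exponential-type decay and keeping $\phi+\sigma$ subharmonic is exactly what forces $\epsilon$ to be small, and it is precisely where the doubling hypothesis on $\Delta\phi$ and the regularity of $\rho$ are indispensable --- over a general subharmonic weight no such $\sigma$ need exist.
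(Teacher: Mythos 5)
The paper does not actually prove this lemma: it cites Theorem~1.1 and Proposition~2.11 of \cite{MO09} verbatim, so your attempt is really a reconstruction of Marzo--Ortega-Cerd\`a's argument, and as such it contains a genuine error at its foundation. Your ``workhorse'' claim that $\sup_{D(z)}\phi-\inf_{D(z)}\phi\le C$ with $C$ absolute is false: for the classical weight $\phi(z)=\tfrac{\alpha}{2}|z|^2$ one has $\rho\simeq 1$, and the oscillation of $\phi$ over $D(z,\rho(z))$ grows like $|z|$ (Harnack controls positive harmonic functions, not the oscillation of the harmonic part, which here is unbounded). What is actually true, and what \cite{MMO03} (and the paper's Lemma~\ref{Lemma 19}) use, is the existence of a function $H_z$ harmonic on $D^r(z)$ with $|\phi-H_z|\le C_r$ there; one must always work with $fe^{-H_z}$, never with $e^{\phi(w)}\simeq e^{\phi(z)}$ on $D(z)$, which simply fails.

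This error propagates into Step~2, which is broken even in the model case: for $K(w,z)=e^{\alpha w\bar z}$ the ratio $|K(w,z)|/K(z,z)=e^{\alpha\,\mathrm{Re}((w-z)\bar z)}$ is unbounded above and below on $D^{r_0}(z)$, so the intermediate conclusion $|K(w,z)|\simeq K(z,z)\simeq e^{2\phi(z)}/\rho(z)^2$ is false, as is the Cauchy-estimate bound $|K(w,z)-K(z,z)|\lesssim \tfrac{|w-z|}{\rho(z)}e^{2\phi(z)}\rho(z)^{-2}$ (the supremum of $|K(\cdot,z)|$ on $D(z)$ carries the unbounded factor $e^{\sup_{D(z)}\phi-\phi(z)}$). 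Estimate \eqref{eq:03} is stated with $e^{\phi(w)+\phi(z)}$ precisely because it is not equivalent to $e^{2\phi(z)}$, and its proof in \cite{MO09} (Proposition~2.11) is a genuinely different and more delicate argument than a first-order Taylor comparison with the diagonal; in particular one must rule out zeros of $K(\cdot,z)$ near $z$, which your sketch does not address. Your Step~1 upper bound and the Cauchy--Schwarz reduction are fine once rewritten with $H_z$ in place of the bounded-oscillation claim, and the perturbed-weight H\"ormander scheme in Step~3 is indeed the mechanism behind the decay in \eqref{eq:02}, but as written it leans on the same invalid pointwise conversions and leaves the key globalization (``iterating over dyadic Bergman radii'') unproved, so the proposal cannot stand as a proof; citing \cite{MO09}, as the paper does, or following its argument with the harmonic approximation is the correct route.
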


\begin{proof}
For (A), see Theorem 1.1 of \cite{MO09}, and for (B), see Proposition~2.11 of~\cite{MO09}.
\end{proof}

\begin{corollary}\label {K-integral}  
For $p >0$ and  $l\in\R$, there is some $C>0$ such that
\begin{equation}\label{rho-integral-a}
	\int_\C\left( \beta(z, \xi) +1\right) ^l |K(z, \xi)|^p  e^{-p\phi(\xi)}dA(\xi)\le C \rho(z)^{ 2( 1-p )}e^{p\phi(z)}\, \textrm{ for } \, z\in \C,
\end{equation}
and
\begin{equation}\label{rho-integral-z}
	\sup_{z\in \C}\rho(z)^{ 2( p-1 )}e^{-p\phi(z)}\int_{ {\mathbb
   C}\setminus B(z, R)}\left( \beta(z, \xi) +1\right) ^l |K(z, \xi)|^p  e^{-p\phi(\xi)}dA(\xi)  \to 0
\end{equation}
as $R\to \infty$.
\end{corollary}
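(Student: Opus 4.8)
The plan is to reduce the statement to Lemma~\ref{Lma:2.1} by means of the pointwise Bergman kernel estimate in Lemma~\ref{basic-est}(A).

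First I would apply \eqref{eq:02}: raising it to the $p$-th power and multiplying by $e^{-p\phi(\xi)}$ gives
\begin{equation*}
	|K(z,\xi)|^p e^{-p\phi(\xi)} \le C\,\frac{e^{p\phi(z)}}{\rho(z)^p\rho(\xi)^p}\,e^{-p\left(\frac{|z-\xi|}{\rho(z)}\right)^\epsilon}.
\end{equation*}
Hence the left-hand side of \eqref{rho-integral-a} is bounded by
\begin{equation*}
	C\,\frac{e^{p\phi(z)}}{\rho(z)^p}\int_\C \left(\beta(z,\xi)+1\right)^l \rho(\xi)^{-p}\,e^{-p\left(\frac{|z-\xi|}{\rho(z)}\right)^\epsilon}\,dA(\xi).
\end{equation*}
Now I would invoke Lemma~\ref{Lma:2.1}, \eqref{rho-integral}, with the parameters $k=-p$ (and the same $l$, $\epsilon$, and $p$ as the decay rate), which bounds the integral by $C\rho(z)^{-p+2}$. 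Multiplying back, the right-hand side becomes $C\,e^{p\phi(z)}\rho(z)^{-p}\cdot\rho(z)^{2-p}=C\,\rho(z)^{2(1-p)}e^{p\phi(z)}$, which is exactly \eqref{rho-integral-a}. The tail estimate \eqref{rho-integral-z} follows in exactly the same way: the same chain of inequalities reduces it to \eqref{rho-integral-m} of Lemma~\ref{Lma:2.1} with $k=-p$, after dividing through by $\rho(z)^{2(p-1)}e^{-p\phi(z)}$ (equivalently, multiplying by $\rho(z)^{k+2}=\rho(z)^{2-p}$ inside the supremum), so that the supremum over $z$ tends to $0$ as $R\to\infty$.

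I do not anticipate a serious obstacle here, since the corollary is essentially a direct substitution of the kernel bound into the already-established integral lemma. The only mild point to be careful about is bookkeeping of the powers of $\rho$: one should note that $p$ may be less than $1$, so $k=-p$ is negative and one should not accidentally assume $k\ge 0$ when citing Lemma~\ref{Lma:2.1}, but that lemma is stated for all $k\in\R$, so this causes no difficulty. A second minor point is that the decay exponent $\epsilon$ in \eqref{eq:02} is the one that must be fed into Lemma~\ref{Lma:2.1}; since that lemma holds for every $\epsilon>0$, this is automatic. Thus the proof is short: state the pointwise bound, pull out the $z$-dependent factors, apply \eqref{rho-integral} and \eqref{rho-integral-m} with $k=-p$, and collect the powers of $\rho(z)$.
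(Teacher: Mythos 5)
Your proposal is correct and matches the paper's own proof essentially verbatim: the paper also substitutes the pointwise bound \eqref{eq:02} raised to the $p$-th power, pulls out the factor $e^{p\phi(z)}\rho(z)^{-p}$, and applies \eqref{rho-integral} (with $k=-p$) to get \eqref{rho-integral-a}, noting that \eqref{rho-integral-z} follows similarly from \eqref{rho-integral-m}. Your bookkeeping of the powers of $\rho(z)$ and the remark that Lemma~\ref{Lma:2.1} covers negative $k$ are exactly the right checks.
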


\begin{proof}
By Lemma \ref{basic-est} and the estimate (\ref{rho-integral}),
\begin{align*}
	&\int_\C\left( \beta(z, \xi) +1\right) ^l |K(z, \xi)|^p  e^{-p\phi(\xi)}dA(\xi)\\
	&\le C \fr {e^ {p \phi(z) }}{\rho(z)^p}    \int_\C\left( \beta(z, \xi) +1\right) ^l  {\rho(\xi)^{-p}}  e^{ - p \left(\frac{|z-\xi|}{\rho(z)}\right)^{\epsilon} }dA(\xi)\\
	&\le C \rho(z)^{2(1-p)} e^ {p \phi(z) },
\end{align*}
which shows (\ref{rho-integral-a}). The equality in (\ref{rho-integral-z}) can be proved similarly.
\end{proof}

\begin{corollary}\label{L-infty-norm}
For $s\in\R$, there are positive constants $C_1$ and $C_2$ such that
$$
	C_1 \rho(z)^{s-2} e^{2\phi(z)}\le     \| \rho(\cdot)^s K(\cdot, z)\|_{\infty, \phi}  \le C_2 \rho(z)^{s-2} e^{ \phi(z)}
$$
for all $z\in \C$.
\end{corollary}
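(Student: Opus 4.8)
The plan is to unwind the weighted sup-norm,
$$\bigl\|\rho(\cdot)^{s}K(\cdot,z)\bigr\|_{\infty,\phi}=\esssup_{w\in\C}\ \rho(w)^{s}\,|K(w,z)|\,e^{-\phi(w)},$$
and to read off both inequalities directly from the pointwise kernel bounds in Lemma~\ref{basic-est}. The upper estimate will come from part~(A) together with the comparison \eqref{eq:1} between $\rho(w)$ and $\rho(z)$, while the lower estimate will come from part~(B) by simply testing the essential supremum at the single point $w=z$, where Lemma~\ref{basic-est}(B) gives $K(z,z)\simeq e^{2\phi(z)}/\rho(z)^{2}$. The one mildly delicate point is that $\rho(w)$ is comparable to $\rho(z)$ only on $D(z)$; for $w$ far from $z$ one must absorb the resulting power of $|z-w|/\rho(z)$ into the Gaussian-type decay $e^{-(|z-w|/\rho(z))^{\epsilon}}$ of \eqref{eq:02}, which is exactly the elementary estimate already recorded in \eqref{y-estimate}.

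\emph{Upper bound.} Inserting \eqref{eq:02} and cancelling the factor $e^{\phi(w)}$, one obtains for all $w,z\in\C$
$$\rho(w)^{s}\,|K(w,z)|\,e^{-\phi(w)}\ \le\ C\,\rho(w)^{s-1}\,\rho(z)^{-1}\,e^{\phi(z)}\,e^{-(|z-w|/\rho(z))^{\epsilon}},$$
so it suffices to check that $\rho(w)^{s-1}e^{-(|z-w|/\rho(z))^{\epsilon}}\le C\,\rho(z)^{s-1}$ uniformly in $z,w$. For $w\in D(z)$ this is immediate from $\rho(w)\simeq\rho(z)$ (first part of \eqref{eq:1}). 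For $w\notin D(z)$, set $r=|z-w|/\rho(z)>1$; the second part of \eqref{eq:1} gives $(Cr^{\theta})^{-1}\le\rho(w)/\rho(z)\le Cr^{\theta}$, whence $\rho(w)^{s-1}\le C\,\rho(z)^{s-1}r^{|s-1|\theta}$ regardless of the sign of $s-1$, and $r^{|s-1|\theta}e^{-r^{\epsilon}}$ is bounded on $[1,\infty)$ just as in \eqref{y-estimate}. Taking the essential supremum over $w$ yields $\|\rho(\cdot)^{s}K(\cdot,z)\|_{\infty,\phi}\le C_{2}\,\rho(z)^{s-2}e^{\phi(z)}$.

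\emph{Lower bound.} Since $z\in D^{r_{0}}(z)$, Lemma~\ref{basic-est}(B) applied at $w=z$ gives $K(z,z)\simeq e^{2\phi(z)}/\rho(z)^{2}$. Because $w\mapsto\rho(w)^{s}|K(w,z)|e^{-\phi(w)}$ is a product of a continuous function and a lower semicontinuous one, it is lower semicontinuous, so its essential supremum dominates its value at $w=z$; hence
$$\bigl\|\rho(\cdot)^{s}K(\cdot,z)\bigr\|_{\infty,\phi}\ \ge\ \rho(z)^{s}\,K(z,z)\,e^{-\phi(z)}\ \ge\ C_{1}\,\rho(z)^{s-2}e^{\phi(z)}.$$
Combining the two displays completes the argument. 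As this shows, the statement is genuinely a corollary of Lemma~\ref{basic-est}: the only real work is the two-regime bookkeeping ($r\le 1$ versus $r>1$) in \eqref{eq:1} weighed against the Gaussian decay, and that is the step I would expect to be the (minor) crux.
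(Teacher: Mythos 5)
Your argument is correct and is essentially the paper's own proof: the upper bound comes from Lemma~\ref{basic-est}(A) plus the two-regime comparison \eqref{eq:1}, with the power $(|z-w|/\rho(z))^{|s-1|\theta}$ absorbed into $e^{-(|z-w|/\rho(z))^{\epsilon}}$ (the paper discretizes over the annuli $D^{m}(z)\setminus D^{m-1}(z)$ while you keep the continuous parameter $r=|z-w|/\rho(z)$, which is no real difference), and the lower bound comes from testing the weighted essential supremum at $w=z$ via Lemma~\ref{basic-est}(B); your lower-semicontinuity remark justifying that the essential supremum dominates the value at $z$ is in fact slightly more careful than the paper, which simply evaluates there. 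The one divergence is the exponent in the lower bound: you obtain $C_1\rho(z)^{s-2}e^{\phi(z)}$, whereas the corollary as printed asserts $C_1\rho(z)^{s-2}e^{2\phi(z)}$. Your version is the correct one: the printed lower bound is incompatible with the upper bound $C_2\rho(z)^{s-2}e^{\phi(z)}$ unless $e^{\phi}$ is bounded, which fails for any non-constant subharmonic weight, and the paper's proof produces the spurious extra factor only because its display $\|\rho(\cdot)^{s}K(\cdot,z)\|_{\infty,\phi}\ge\rho(z)^{s}K(z,z)$ drops the weight $e^{-\phi(z)}$ at the test point. So the statement should be read as $\|\rho(\cdot)^{s}K(\cdot,z)\|_{\infty,\phi}\simeq\rho(z)^{s-2}e^{\phi(z)}$, which is exactly what you prove and what is actually used later (only the upper bound enters the proof of Theorem~\ref{Fredholm}).
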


\begin{proof}
From  (\ref{eq:03}) we see that
\begin{equation}\label{infty-norm-a}
	\| \rho(\cdot)^s K(\cdot, z)\|_{\infty, \phi}  \ge \rho(z)^s K(z, z) \ge C  \rho(z)^{s-2} e^{ 2\phi(z)}.
\end{equation}
To prove the reverse inequality, for $\xi\in D^m(z)\setminus D^{m-1}(z)$ with some $m=1, 2, \ldots$, by \eqref{eq:1} we have
$$
      \rho(\xi)^s   = \rho(z)^s\left(\fr  {\rho(\xi)} {\rho(z)}\right)  ^s  \le C \rho(z)^s(m^\gamma) ^{|s|}.
$$
Notice that $\{ m^{\gamma |s-1|} e^{- (m-1)^\epsilon} \}_{m=1}^\infty$ is bounded, and so
\begin{align*}
	\rho(\xi)^s | K(\xi, z)| e^{-\phi(\xi)}
	&\le C \fr {e^{\phi(z)}}{\rho(z)} \rho(\xi)^{s-1} e^{-\left( \fr {|\xi-z|}{\rho(z)} \right)^\epsilon}\\
	&\le C   \rho(z)^{s-2}{e^{\phi(z)}} m^{\gamma |s-1|} e^{- (m-1)^\epsilon}\\
	&\le C   \rho(z)^{s-2} {e^{\phi(z)}}
\end{align*}
This implies
$$
	\| \rho(\cdot)^s K(\cdot, z)\|_{\infty, \phi}  \le C \rho(z)^{s-2} e^{ \phi(z)},
$$
which completes the proof.
\end{proof}

In our analysis, we need  some auxiliary function spaces. Given a Lebesgue measurable function $f$ on $\C$, we set
\begin{equation}\label{e:local oscillation1}
	\omega (f)(z)=\sup\left\{|f(z)-f(w)|:  \beta(z, w)<r \right\}.
\end{equation}
When defining the following function spaces, we suppress $r$, because the spaces are independent of it, and often choose $r=1$. We denote by $BO$ the class  of all  continuous functions $f$ on ${\C}$ for which $\omega (f)$ is bounded on  ${\C}$ and let $VO$ denote the class of all  continuous functions $f$ for which
$$
	\lim\limits_{z\rightarrow\infty}\omega (f)(z)=0.
$$
For $f\in BO$, set $\|f\|_{BO }=\sup\limits_{z\in {\C}}\omega (f)(z)$. With Lemma \ref{distance-d}, for a function $f$ continuous on $\C$, it is easy to verify that  $f\in BO $ if and only if
\begin{equation}\label{e:local oscillation2}
	\omega_{r}^*(f)(z)=\sup\left\{|f(z)-f(w)|:  w\in D^{r}(z)\right\} \in L^\infty
\end{equation}
with the equivalent semi-norm $ \sup_{z\in \C} \omega_{r}^*(f)(z)$, and $f\in VO $ if and only if $\lim_{z\to \infty} \omega_{r}^*(f)(z)=0$.

Given a Lebesgue measurable function $f$, write
$$
	\widehat{f} (z)=\frac{1}{A\left(D (z)\right)}\int_{D (z)}fdA
$$
for the average of $f$ over $D (z)$. For $0<p<\infty$, we use ${BA} ^p$ to denote the space of all  $f\in L_{loc}^p({\C})$ such that
$$
	\|f\|_{{BA} ^p}=\sup\limits_{z\in {\C}}\left[\widehat{\left(|f|^p\right)} (z)\right]^{\frac{1}{p}}<\infty.
$$
The space ${VA} ^p$ consists of all  $f\in BA^p $ such that
$$
	\lim\limits_{z\rightarrow\infty}\widehat{\left(|f|^p\right)} (z)=0.
$$
Given a Lebesgue measurable function $f$ which is positive or satisfies the condition $f|k_z|^2\in L^1_{2\phi}$ for all $z\in \C$, we define the Berezin transform $\widetilde{f}$ of $f$ by
\begin{equation}\label{berezing}
	\widetilde{f}(z)=\int_{{\C}}f(w)\left|k_{z}(w)\right|^2e^{-2\phi(w)}dA(w).
\end{equation}
For $f\in L_{loc}^p({\C})$, take $d\mu= |f|^p dA$, $p=q$ and $t=2$ in Theorems 3.2 and 3.3 of~\cite{HL17} to obtain
$$
	\sup\limits_{z\in {\C}}\widehat{(|f|^p)} (z)\simeq \sup\limits_{z\in {\C}}\widetilde{(|f|^p)}(z)
$$
and
$$
	\lim\limits_{z\rightarrow\infty}\widehat{(|f|^p)} (z)=0
	\iff \lim\limits_{z\rightarrow\infty}\widetilde{(|f|^p)}(z)=0.
$$
For  $1\leq p<\infty$,  $BMO ^p$ is defined as   the space of all $f\in L_{loc}^p({\C})$ such that
$$
	\|f\|_{BMO ^{p}}=\sup\limits_{z\in {\C}}MO_{p }(f)(z)<\infty,
$$
where
$$
	MO_{p }(f)(z)=\left(\frac{1}{A\left(D (z)\right)}\int_{D (z)}\left|f-\widehat{f} (z)\right|^pdA\right)^{\frac{1}{p}}.
$$
We say $f\in VMO^p$ if  $f\in BMO ^{p}$ and $\lim\limits_{z\rightarrow\infty}MO_{p }(f)(z)=0$. Finally, we observe that
\begin{equation}\label{e:decomposition}
	BMO^p = BO + BO^p\quad{\rm and}\quad VMO^p = VO + VA^p,
\end{equation}
which can be obtained by writing $f = \tilde f + (f-\tilde f)$; for further details, see Theorem~2.5 of~\cite{HL16}.

\section{Hankel operators with $BO$ symbols}
In this section, we show that for $0<p<\infty$, the Hankel operator $H_f: F^p_\phi \to L^p_\phi$ is bounded if $f\in BO$ and compact if $f\in VO$. In addition to the intrinsic interest, these properties will be needed for our characterization of Fredholm Toeplitz operators.

We start with an auxiliary operator. For $l\in\R$, with   kernel $(\beta(\cdot, \cdot)+1)^l |K(\cdot, \cdot)|$ we define an integral operator $G_l$ by
$$
	G_l(f)(z)= \int_{\C} f(\xi) (\beta(z, \xi)+1)^l |K(z, \xi)| e^{-2\phi(\xi)}dA(\xi).
$$
To show that $G_l$ is bounded from $F^p_\phi$ to $L^p_\phi$ also for $0<p<1$, we need the following generalization of a result in~\cite{MMO03}.

\begin{lemma}\label{Lemma 19}
Let $0<p<\infty$. For any $r>0$, there is a $C>0$ such that
\begin{equation}\label{subharmonicity}
	\left |f(z)e^{-\phi(z)}\right|^ p \le C  \fr 1{
  A(D^r(z))}\int_{D^r(z) } \left |f(\xi)e^{-\phi(\xi)}\right|^ p dA(\xi)
\end{equation}
for all $f\in H(\C)$ and $z\in \C$.
\end{lemma}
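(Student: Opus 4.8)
The plan is to reduce to the case of small $r$, localize the weight $e^{-\phi}$ near $z$ by means of the Riesz decomposition of the subharmonic function $\phi$, apply the ordinary sub-mean value inequality to a holomorphic function, and absorb the error coming from the potential part of $\phi$ into the constant using the doubling hypothesis on $\nu$.

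First I would note that it is enough to prove \eqref{subharmonicity} for all sufficiently small $r$: if it holds for some $r_0$, then since $D^{r_0}(z)\subseteq D^{r}(z)$ and $A(D^{r_0}(z))/A(D^{r}(z))=(r_0/r)^2$, it also holds for every $r\ge r_0$, with the constant multiplied by $(r/r_0)^2$. So fix $z\in\C$ and $r>0$ small enough that $3r\le 1$ and that $\rho(w)\simeq\rho(z)$ for $w\in D^{2r}(z)$ (possible by \eqref{eq:1}). On the simply connected domain $D^{2r}(z)$ I would write $\phi=\phi_0+h$, where
$$
	\phi_0(w)=\frac1{2\pi}\int_{D^{2r}(z)}\log|w-\zeta|\,d\nu(\zeta)
$$
is the logarithmic potential of $\nu|_{D^{2r}(z)}$, so that $\Delta\phi_0=\nu$ on $D^{2r}(z)$ and hence $h=\phi-\phi_0$ is harmonic there. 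Picking $H\in H(D^{2r}(z))$ with $\operatorname{Re}H=h$, the function $F:=fe^{-H}$ is holomorphic on $D^{2r}(z)$, satisfies $|F|=|f|e^{\phi_0-\phi}$, and $|F|^p$ is subharmonic for every $p>0$. Since $D^{r}(z)$ is a Euclidean disk centred at $z$ with $\overline{D^{r}(z)}\subset D^{2r}(z)$, the sub-mean value inequality for $|F|^p$ over $D^{r}(z)$, after multiplying through by $e^{-p\phi_0(z)}$, becomes
$$
	\left|f(z)e^{-\phi(z)}\right|^p\le \frac1{A(D^{r}(z))}\int_{D^{r}(z)}\left|f(w)e^{-\phi(w)}\right|^p e^{p(\phi_0(w)-\phi_0(z))}\,dA(w).
$$
Thus the whole lemma reduces to showing that $\phi_0(w)-\phi_0(z)$ is bounded above, uniformly in $z$, by a constant depending only on $r$, for $w\in D^{r}(z)$.

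This last step is the only non-routine point. For $w\in D^{r}(z)$ and $\zeta\in D^{2r}(z)$ one has $|w-\zeta|<3r\rho(z)$ and $|z-\zeta|<2r\rho(z)$, so $\phi_0(w)-\phi_0(z)\le\frac1{2\pi}\int_{D^{2r}(z)}\log\frac{3r\rho(z)}{|z-\zeta|}\,d\nu(\zeta)$, and the layer-cake formula turns the right-hand side into $\frac1{2\pi}\int_0^{3r\rho(z)}\frac{\nu(D(z,t))}{t}\,dt$ (here $3r\le1$ is used so that the relevant disks stay inside $D(z,\rho(z))$). To bound this integral I would invoke the reverse-doubling property of $\nu$: iterating the doubling inequality together with a disjointness argument gives absolute constants $C_0,\gamma>0$ with $\nu(D(z,t))\le C_0\,(t/\rho(z))^{\gamma}\,\nu(D(z,\rho(z)))=C_0\,(t/\rho(z))^{\gamma}$ for $0<t\le\rho(z)$; hence the integral is at most $C_0\rho(z)^{-\gamma}\int_0^{3r\rho(z)}t^{\gamma-1}\,dt=C_0(3r)^{\gamma}/\gamma$, a finite constant depending only on $r$. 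Everything else is the standard combination of the Riesz decomposition of a subharmonic function with sub-mean value estimates; the only reason the scaling reduction in the first step is needed is to keep the argument on disks $D^{2r}(z)$ on which $\rho$ is comparable to $\rho(z)$.
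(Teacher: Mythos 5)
Your proof is correct and is in substance the same argument the paper relies on: the paper simply cites Lemma 19(a) of \cite{MMO03} (whose proof is exactly this local Riesz decomposition $\phi=\phi_0+h$ on a disk, passage to a harmonic conjugate, and the sub-mean value inequality for the subharmonic function $|fe^{-H_z}|^p$, with the doubling hypothesis controlling the oscillation of the local logarithmic potential) and remarks that subharmonicity of $|fe^{-H_z}|^p$ extends it to $0<p<1$. Your write-up just makes that cited construction explicit—including the reverse-doubling bound $\nu(D(z,t))\le C_0(t/\rho(z))^{\gamma}$—and treats all $p>0$ uniformly, which is a perfectly sound self-contained version of the same approach.
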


\begin{proof}
When $1\le p<\infty$, estimate (\ref{subharmonicity}) is just Lemma 19 (a) of \cite{MMO03}. To deal with the other values of $p$, notice that the function $f e^{-H_z}$ with $H_z$ defined as in \cite{MMO03} is holomorphic in $D^r(z)$. Then $\left|f e^{-H_z}\right|^p$ is subharmonic for $0<p<1$ so that the sub-mean value estimate can be applied as in the case $1\le p<\infty$. This shows that the proof of Lemma 19 (a) in \cite{MMO03} works for all $0<p<\infty$.
\end{proof}

\begin{lemma} \label{extened-Bergman-projection}
For  $1\le p\le \infty$, the operator $G_l$ is bounded on $L^p_\phi$; and for  $0< p\le \infty$, $G_l$  is bounded from $F^p_\phi$ to $L^p_\phi$.
\end{lemma}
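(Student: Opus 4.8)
The plan is to treat the two claims separately and to use Schur's test for the $L^p_\phi$-boundedness of $G_l$ and a combination of Schur's test with the subharmonicity estimate of Lemma~\ref{Lemma 19} for the boundedness of $G_l$ from $F^p_\phi$ to $L^p_\phi$ in the quasi-Banach range $0<p<1$. For the first statement, fix $1\le p\le\infty$ and consider the kernel $N(z,\xi)=(\beta(z,\xi)+1)^l|K(z,\xi)|e^{-2\phi(\xi)}$. By Corollary~\ref{K-integral} applied with exponent $p=1$ (and with the weight $\rho(\cdot)^l$ absorbed into the exponent $l$ there), and by symmetry of $\beta$ and $|K|$, one checks that the test function $h(z)=\rho(z)^{2/p'-2/p}e^{\phi(z)}$ — or more simply $h(z)=e^{\phi(z)}$ together with a power of $\rho$ chosen to balance the two Schur integrals — satisfies
\[
	\int_\C N(z,\xi)\,h(\xi)^{p'}\,dA(\xi)\le C\,h(z)^{p'}
	\quad\text{and}\quad
	\int_\C N(z,\xi)\,h(z)^{p}\,dA(z)\le C\,h(\xi)^{p},
\]
after multiplying through by the unweighted measure; concretely these are exactly the estimates \eqref{rho-integral-a} with $p=1$, once one tracks the weights $e^{\pm\phi}$ and the powers of $\rho$. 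Schur's test then gives boundedness of $G_l$ on $L^p_\phi$ for $1\le p<\infty$, and the cases $p=1,\infty$ follow directly from \eqref{rho-integral-a} with $p=1$ (integrating in $\xi$, respectively taking the supremum).

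For the second statement in the range $0<p<1$, Schur's test is unavailable, so the plan is to dominate $|G_l(f)(z)|$ pointwise and then use Lemma~\ref{Lemma 19} to pass to an $L^p$ average of $|f|^p$. Fix an $r$-lattice $\{a_j\}$ with $r$ small, write $\C=\bigcup_j D^{r}(a_j)$, and estimate
\[
	|G_l(f)(z)|\le\sum_j\int_{D^r(a_j)}|f(\xi)|(\beta(z,\xi)+1)^l|K(z,\xi)|e^{-2\phi(\xi)}\,dA(\xi).
\]
On each $D^r(a_j)$ one has $\rho(\xi)\simeq\rho(a_j)$, $\beta(z,\xi)+1\simeq\beta(z,a_j)+1$ up to an additive constant, and by Lemma~\ref{basic-est}(A) and Lemma~\ref{Lemma 19} (applied to the holomorphic function $f$ on a slightly larger disk $D^{2r}(a_j)$)
\[
	\int_{D^r(a_j)}|f(\xi)|e^{-\phi(\xi)}\,dA(\xi)
	\le C\rho(a_j)^{2}\Bigl(\frac{1}{A(D^{2r}(a_j))}\int_{D^{2r}(a_j)}|f e^{-\phi}|^{p}\,dA\Bigr)^{1/p}.
\]
This yields $|G_l(f)(z)|e^{-\phi(z)}\le C\sum_j(\beta(z,a_j)+1)^l\rho(a_j)^{-?}e^{-(\dots)^\epsilon}\,b_j$ with $b_j=(\int_{D^{2r}(a_j)}|fe^{-\phi}|^p)^{1/p}/\rho(a_j)^{2/p}$; raising to the $p$-th power, using the quasi-triangle inequality $(\sum|c_j|)^p\le\sum|c_j|^p$ for $0<p<1$, integrating in $z$, and applying \eqref{rho-integral} (with $k$ and $l$ adjusted) to the resulting single sum over $j$, followed by the finite-overlap property of the lattice, gives $\|G_l f\|_{p,\phi}^p\le C\sum_j\int_{D^{2r}(a_j)}|fe^{-\phi}|^p\,dA\le C\|f\|_{p,\phi}^p$.

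The main obstacle is the geometric bookkeeping in the subunit-range argument: one must choose the Schur/test powers of $\rho$ so that the exponents match after using \eqref{eq:1}, which introduces the loss $\rho(w)/\rho(z)\le Cr^\theta$ over the dyadic annuli, and one must verify that after raising the lattice sum to the power $p<1$ the remaining sum is still controlled by \eqref{rho-integral} — here the key point is that the factor $(\beta(z,a_j)+1)^{l}$ survives at full strength while the Gaussian-type decay $e^{-p((|z-a_j|/\rho(z))^\epsilon)}$ is strong enough to beat the polynomial growth coming both from $l$ and from the $\rho$-comparison constant $r^\theta$. Everything else — the decomposition over the lattice, the use of Lemma~\ref{Lemma 19}, and the finite-overlap property — is routine once the exponents are pinned down; the endpoint $p=\infty$ for $G_l:F^\infty_\phi\to L^\infty_\phi$ again reduces to \eqref{rho-integral-a} with $p=1$ and Corollary~\ref{L-infty-norm}.
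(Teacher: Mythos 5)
Your argument is correct, and for the quasi-Banach range it is essentially the paper's own proof: the paper likewise decomposes over an $r$-lattice, uses Lemma~\ref{Lemma 19} to replace the supremum on each lattice disk by a $p$-th power average on a dilated disk, applies $(\sum_j c_j)^p\le\sum_j c_j^p$, and finishes with Fubini and the kernel estimate of Corollary~\ref{K-integral}; your version keeps the lattice sum explicit and closes it with \eqref{rho-integral} plus finite overlap, which is the same mechanism in slightly different packaging. For the record, the exponent you left as ``$\rho(a_j)^{-?}$'' does come out right: the $j$-th term is bounded by $C(\beta(z,a_j)+1)^l\,\rho(a_j)\rho(z)^{-1}e^{-c(|z-a_j|/\rho(a_j))^{\epsilon}}\,b_j$, and after raising to the power $p$ and integrating in $z$, \eqref{rho-integral} with $k=-p$ produces $\rho(a_j)^{2-p}$, so all powers of $\rho(a_j)$ cancel against $b_j^p=\rho(a_j)^{-2}\int_{D^{2r}(a_j)}|fe^{-\phi}|^p\,dA$ and the finite-overlap property gives $\|G_lf\|_{p,\phi}^p\le C\|f\|_{p,\phi}^p$. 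Where you genuinely depart from the paper is the Banach range: the paper establishes the $L^1_\phi$ and $L^\infty_\phi$ bounds directly and then invokes complex interpolation (Theorem~3.5 of~\cite{OP15}), whereas you use Schur's test for $1<p<\infty$ together with the two endpoint computations. This works: after conjugating by $e^{\phi}$ the kernel becomes $(\beta(z,\xi)+1)^l|K(z,\xi)|e^{-\phi(z)-\phi(\xi)}$, which is symmetric, and its row and column integrals are bounded by a constant precisely by \eqref{rho-integral-a} with $p=1$ (using $|K(z,\xi)|=|K(\xi,z)|$ to put $\rho$ of the fixed variable in the exponential), so the constant test function $h\equiv 1$ (equivalently $h=e^{\phi}$ in the weighted formulation) already satisfies both Schur conditions --- your proposed $h(z)=\rho(z)^{2/p'-2/p}e^{\phi(z)}$ is unnecessarily elaborate, but the reduction you state is the right one. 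The trade-off is minor: Schur's test avoids citing the interpolation theorem for these weighted spaces, at the cost of the (easy) symmetry bookkeeping, while the paper's route reuses its $L^1$ and $L^\infty$ computations verbatim.
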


\begin{proof}
We begin with the case $1\le p\le \infty$. For $f$ Lebesgue measurable, it follows from Lemmas~\ref{Lma:2.1} and~\ref{basic-est} that
 \begin{align*}
       \|G_lf\|_{L^1_\phi} &\le  \int_{\C} e^{-\phi(z)}\, dA(z)
          \int_{\C} |f(\xi)|  (\beta(z, \xi) + 1)^l |K(z, \xi)| e^{-2\phi(\xi)}\, dA(\xi)\\
  &=\int_{\C} |f(\xi)| e^{-2\phi(\xi)}\, dA(\xi) \int_{\C} e^{-\phi(z)}
          (\beta(z, \xi)+1)^l  |K(z, \xi)|\, dA(z)\\
 &\le  C  \int_{\C} |f(\xi)| e^{-\phi(\xi)}\, dA(\xi) \int_{\C}
         (\beta(z, \xi)+1)^l  \frac{1}{\rho(\xi)\rho(z)}e^{ - \left(\frac{|z-\xi|}{\rho(z)}\right)^{\epsilon} }   dA(z)\\
  &\le  C \|f\|_{L^1_\phi}.
\end{align*}
Similarly, we have
\begin{align*}
	\|G_lf\|_{L^\infty_\phi} &\le \sup_{z\in \C} e^{-\phi(z)}
          \int_{\C} |f(\xi)|  (\beta(z, \xi)+1)^l  |K(z, \xi)| e^{-2\phi(\xi)}dA(\xi)\\
	&\le \|G_lf\|_{L^\infty_\phi} \sup_{z\in \C} e^{-\phi(z)}
          \int_{\C}   (\beta(z, \xi)+1)^l  |K(z, \xi)| e^{-\phi(\xi)}dA(\xi)\\
	&\le \|G_l f\|_{L^\infty_\phi}.
\end{align*}
By interpolation (see Theorem~3.5 of \cite{OP15}), we know that  $G_l$ is bounded on $L^p_\phi$ when $1\le p\le \infty$.

Next we treat the case $0<p<1$. By Lemma \ref{distance-d}, we get $\beta(w, \xi)\le   C_1 $ for $\xi\in D(w)$. Then, for $z, w\in \C$ and $\xi, \zeta \in D^2(w)$, we have
$$
	\beta(z, \xi)\le \beta(z, \zeta)+\beta(\zeta, w)+\beta(w, \xi) \le \beta(z, \zeta) +2C_2
$$
and so
\begin{equation}\label{d-estimate}
   \sup_{\xi\in D(w)}\beta(z, \xi)\le (2C_2+1) \inf_{\xi\in D^2 (w)} \left( \beta(z, \xi) +1 \right)
\end{equation}
for all $z, w\in \C$. Therefore, for an $r$-lattice $\{z_j\}_{j=1}^\infty$ and $f\in H(\C)$, using Lemma~\ref{Lemma 19}, we get
\begin{align*}
     &|G_l(f)(z)|^p \le \left( \sum_{j=1}^\infty \int_{D (z_j)}  |f(\xi)|
        \beta(z, \xi)  |K(z, \xi)|  e^{-2\phi(\xi)} dA(\xi)\right)^p \\
 &\le \sum_{j=1}^\infty \left(  \int_{D (z_j)}  |f(\xi)|
        \beta(z, \xi)  |K(z, \xi)|  e^{-2\phi(\xi)} dA(\xi)\right)^p\\
 &\le C\sum_{j=1}^\infty  \sup_{\xi\in D (z_j)} \left(  |f(\xi)|
        \beta(z, \xi)  |K(z, \xi)|  e^{-2\phi(\xi)} \right)^p \rho(z_j)^{2p}\\
 &\le C   \sup_{\xi\in D (z_j)}\int_{D^{2 }(z_j)}
        \left(  |f(\xi)|
        \left( \beta(z, \xi)+1\right)  |K(z, \xi)|  e^{-2\phi(\xi)} \right)^p
         \rho(\xi)^{2p-2}dA(\xi)\\
 &\le C   \int_{\C}
        \left(  |f(\xi)|
       \left( \beta(z, \xi)+1\right)  |K(z, \xi)|  e^{-2\phi(\xi)} \right)^p \rho(\xi)^{2p-2}dA(\xi).
\end{align*}
Thus, for $f\in H(\C)$ and $0<p<1$,
\begin{equation}\label{extened-Bergman-projection-1}
	|G_l(f)(z)|^p\le C \int_\C \left( |f(\xi)| \left( \beta(z, \xi)+1\right) |K(z, \xi)|  e^{-2\phi(\xi)}\right)^p \rho(\xi)^{2p-2} dA(\xi).
\end{equation}
Applying \eqref{rho-integral-a} and Fubini's theorem, we get
\begin{align*}
  \|G_l(f)\|^p_{L^p_\phi}
  &\le C\int_\C e^{-p\phi(z)}dA(z) \int_{\C}
        \left(  |f(\xi)|
        \left(\beta(z, \xi)+1\right)  |K(z, \xi)|  e^{-2\phi(\xi)} \right)^p\\ 
        &\qquad\qquad\qquad\qquad\qquad\qquad\qquad\times \rho(\xi)^{2p-2}dA(\xi)\\
  &=C \int_{\C}
        \left(  |f(\xi)|
          e^{-2\phi(\xi)} \right)^p \rho(\xi)^{2p-2}dA(\xi) \int_\C \left(\beta(z, \xi)+1\right) ^p\\	&\qquad\qquad\qquad\qquad\qquad\qquad\qquad\times |K(z, \xi)|^p  e^{-p\phi(z)}dA(z) \\
 &\le C \int_{\C}
          |f(\xi)|^p
          e^{-p\phi(\xi)}  dA(\xi),
\end{align*}
which completes the proof.
\end{proof}

Relative to the Bergman projection $P$, there is another operator $P_+$, known as the absolute Bergman projection, which has been studied in the Bergman space case. In the context of Fock spaces, we define $P_+ : L^p_\phi \to F^p_\phi$ analogously by setting
$$
	P_+f(z)= \int_{\Cn} f(\xi)|K(z, \xi)| e^{-2\phi(\xi)} dv(\xi), \quad z\in \C.
$$
The following boundedness properties of $P_+$ follow directly from the preceding lemma.

\begin{corollary}\label {alsolute-projection}
The operator $P_+$ is bounded on $L^p_\phi$ for $1\le p\le \infty$, and it is bounded from $F^p_\phi$ to $L^p_\phi$ for  $0<p<1$.
\end{corollary}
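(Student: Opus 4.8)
The plan is to observe that $P_+$ is nothing other than the operator $G_l$ in the special case $l=0$. Comparing the two definitions, the kernel of $G_0$ is $(\beta(z,\xi)+1)^0|K(z,\xi)|=|K(z,\xi)|$, so that
$$
G_0(f)(z)=\int_\C f(\xi)|K(z,\xi)|e^{-2\phi(\xi)}\,dA(\xi)=P_+f(z)
$$
for every $z\in\C$ and every $f$ for which the integral makes sense (reading the measure $dv$ in the definition of $P_+$ as the area measure $dA$). Thus $P_+=G_0$ as operators, and nothing further is needed beyond invoking the preceding lemma.

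Concretely, I would apply Lemma~\ref{extened-Bergman-projection} with $l=0$. Its first assertion then shows that $P_+=G_0$ is bounded on $L^p_\phi$ for every $1\le p\le\infty$, while its second assertion shows that $P_+=G_0$ maps $F^p_\phi$ boundedly into $L^p_\phi$ for every $0<p\le\infty$, and in particular for $0<p<1$. This is exactly the content of the corollary.

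There is essentially no obstacle here: the corollary is just Lemma~\ref{extened-Bergman-projection} with the auxiliary weight $(\beta+1)^l$ switched off, the weighted kernels $G_l$ with $l>0$ having been introduced only because they will be needed later in the analysis of Hankel operators. The one point worth recording is that $P_+f$ is genuinely well defined for $f\in F^p_\phi$ when $0<p<1$: this is implicit in the proof of Lemma~\ref{extened-Bergman-projection}, where the pointwise estimate \eqref{extened-Bergman-projection-1} is established for every $f\in H(\C)$ via the sub-mean-value estimate of Lemma~\ref{Lemma 19}, and that estimate in turn yields the absolute convergence of the defining integral for $P_+f(z)$.
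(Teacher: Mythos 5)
Your proposal is correct and is exactly the paper's argument: the corollary is stated as following directly from Lemma~\ref{extened-Bergman-projection}, which you invoke with $l=0$ after identifying $P_+$ with $G_0$ (the $dv$ in the definition of $P_+$ is indeed just the area measure $dA$, a typographical slip in the paper). Your additional remark on the well-definedness of $P_+f$ for $0<p<1$ via \eqref{extened-Bergman-projection-1} is a harmless and accurate elaboration.
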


\begin{lemma}\label{compact-supported}
 For each $R>0$ and $\varepsilon>0$, there is a function $h_R$ defined on $\C$ such that $\mathrm{supp}\, h_R$, the support of $h_R$,  is compact and
$$
	h_R|_{B(z, R)}\equiv 1, \verb# #\|h_R\|_{L^\infty}= 1, \verb# # \| h_R\|_{BO}<\varepsilon.
$$
\end{lemma}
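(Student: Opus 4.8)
The plan is to realize $h_R$ as a Lipschitz function of the Bergman distance to a fixed base point, where the Lipschitz constant is tuned to $\varepsilon$. Fix $z_0\in\C$ (say $z_0=0$), and for a parameter $L>0$ to be chosen later let $\psi_L\colon[0,\infty)\to[0,1]$ be the continuous, piecewise‑linear function with $\psi_L\equiv 1$ on $[0,R]$, $\psi_L\equiv 0$ on $[R+L,\infty)$, and $\psi_L$ affine of slope $-1/L$ on $[R,R+L]$; note $\psi_L$ is $(1/L)$‑Lipschitz. Set
$$
	h_R(w)=\psi_L\bigl(\beta(w,z_0)\bigr),\qquad w\in\C.
$$
Since $w\mapsto\beta(w,z_0)$ is continuous (by the first estimate in Lemma~\ref{distance-d}) and $\psi_L$ is continuous with $0\le\psi_L\le 1$ and $\psi_L\equiv 1$ near $0$, we get at once that $h_R$ is continuous, $\|h_R\|_{L^\infty}=1$, and $h_R\equiv 1$ on $B(z_0,R)$. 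It then remains to check that $\operatorname{supp}h_R$ is compact and that $\|h_R\|_{BO}$ can be made arbitrarily small.

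For the support, observe that $\operatorname{supp}h_R\subseteq\{w:\beta(w,z_0)\le R+L\}$, so it suffices to see this sublevel set is Euclidean‑bounded. Apply the lower bound of Lemma~\ref{distance-d} with $r=1$ and center $z_0$: for every $w\in\C\setminus D^{1}(z_0)$,
$$
	\beta(w,z_0)\ \ge\ C^{-1}\Bigl(\frac{|w-z_0|}{\rho(z_0)}\Bigr)^{\delta},
$$
and since $\rho(z_0)$ is a fixed positive constant this forces $\beta(w,z_0)\to\infty$ as $|w|\to\infty$; hence $\{w:\beta(w,z_0)\le R+L\}$ is contained in a Euclidean disk and $\operatorname{supp}h_R$ is compact. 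For the oscillation, recall $\|h_R\|_{BO}=\sup_{z}\omega(h_R)(z)$, where $\omega(h_R)(z)=\sup\{|h_R(z)-h_R(w)|:\beta(z,w)<r\}$ for the fixed radius $r$ used to define $BO$. Using that $\psi_L$ is $(1/L)$‑Lipschitz and the triangle inequality $|\beta(z,z_0)-\beta(w,z_0)|\le\beta(z,w)$, we obtain for $\beta(z,w)<r$ that
$$
	|h_R(z)-h_R(w)|=\bigl|\psi_L(\beta(z,z_0))-\psi_L(\beta(w,z_0))\bigr|\le\frac1L\,\bigl|\beta(z,z_0)-\beta(w,z_0)\bigr|\le\frac1L\,\beta(z,w)<\frac{r}{L}.
$$
Thus $\|h_R\|_{BO}\le r/L$, and choosing any $L>r/\varepsilon$ gives $\|h_R\|_{BO}<\varepsilon$, which finishes the construction.

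The argument is largely routine; the one place where the geometry of the doubling Fock space enters is the compactness of $\operatorname{supp}h_R$, i.e.\ the fact that Bergman balls centered at a fixed point are Euclidean‑bounded. This is precisely what the fixed‑center lower estimate of Lemma~\ref{distance-d} supplies, exactly because $\rho$ evaluated at the fixed center $z_0$ is a constant (for a variable center the inhomogeneity of $\rho$ would be an obstruction, but it does not arise here). Beyond that, the only care needed is bookkeeping of which constants depend on $R$, on $L$, and on the auxiliary radius $r$ defining the $BO$ seminorm.
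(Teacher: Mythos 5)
Your construction is correct and is essentially the paper's own proof: the paper also takes a piecewise-linear cutoff of $\beta(\cdot,0)$ (with transition width $L=R$ after assuming $R>1/\varepsilon$), verifies the oscillation bound $1/R$, and asserts compactness of the support. Your version is a mild streamlining---decoupling the transition width $L$ from $R$ and replacing the paper's case-by-case oscillation check by the uniform Lipschitz estimate, while making explicit (via Lemma~\ref{distance-d}) why Bergman balls centered at $0$ are Euclidean-bounded.
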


\begin{proof} Without loss of generality, we may assume $R>\fr 1 \varepsilon$. Define
$$
	h_R(z)=
	\begin{cases}
		1,     &  \beta(z, 0)<R; \\
		2-\fr {\beta(z, 0)} R,   \verb# #& R\le \beta(z, 0)< 2R; \\
		0,  & \beta(z, 0)\ge 2R.
	\end{cases}
$$
We show that this function satisfies the required properties.

Clearly, $\mathrm{supp}\, h_R$ is compact and  $h_R |_{B(z, R)}\equiv 1$.  For $z$ with $ \beta(z, 0)\le R-1$ or $ \beta(z, 0)\ge 2R+1$, we see that $\omega(h_R)(z)=0$. For $z$ with $R+1\le \beta(z, 0)\le 2R-1$  and  $w\in B(z, 1)$, we have
$$
	|h_R(w)-h_R(z)|=\left| \fr {\beta(w, 0) }R - \fr {\beta(z, 0)} R \right| \le  \fr {\beta(w, z)} R  < \fr 1 R.
$$
Now for $z$ with $R-1\le \beta(z, 0)< R+1$, when $R\geq 1$, and $w\in B(z, 1)$, we have
$$
	|h_R(w)-h_R(z)| \le 1- \left(2-\fr {R+1} R\right) = \fr 1R.
$$
Similarly, for $z$ with $2R-1\le \beta(z, 0)< 2R+1$, when $R\geq 1$, and $w\in B(z, 1)$, we have
$$
	|h_R(w)-h_R(z)| \leq \left(2-\fr {2R-1} R -0 \right) = \fr 1R.
$$
It follows from these estimates that we have $\omega(h_R)(z)\le \fr 1 R <\varepsilon$.
\end{proof}

The following lemma is well known in other weighted Fock spaces but requires extra work when the Laplacian of the weight function is a doubling measure.

\begin{lemma} \label{hankel-1}
 Let $0<p<\infty$. If $f\in L^\infty(\C)$ has compact support, then the Hankel operator $H_f$ is compact from $F^p_\phi$ to $L^p _\phi$.
\end{lemma}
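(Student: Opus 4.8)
The natural strategy is to reduce compactness of $H_f$ to an approximation argument combined with the boundedness machinery already established for $G_l$ and $P_+$. Since $f\in L^\infty(\C)$ has compact support $K\subset\C$, both $fg$ and $P(fg)$ are well-defined for $g\in F^p_\phi$, and $H_fg = fg - P(fg)$. I would first show that $H_f$ is bounded from $F^p_\phi$ to $L^p_\phi$: the multiplication $g\mapsto fg$ sends $F^p_\phi$ into $L^p_\phi$ with norm at most $\|f\|_{L^\infty}$ (trivially, since $|f g|^p e^{-p\phi} \le \|f\|_\infty^p |g|^p e^{-p\phi}$), and $P(fg) = T_f g$ is controlled by Corollary~\ref{alsolute-projection} applied to $fg$ together with the pointwise kernel bound $|Pf|\le P_+|f|$; note that for $0<p<1$ one uses that $P_+$ maps $F^p_\phi\to L^p_\phi$, and $fg$ may not be holomorphic, so one must instead estimate $P(fg)$ directly via Lemma~\ref{basic-est} and Lemma~\ref{Lma:2.1}, using that $g\in F^p_\phi$ is holomorphic to invoke the sub-mean-value inequality of Lemma~\ref{Lemma 19} on $g$ before integrating against the kernel. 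Once boundedness is in hand, compactness follows if I can show $H_f$ maps bounded sequences in $F^p_\phi$ that converge to $0$ uniformly on compact sets to sequences converging to $0$ in $L^p_\phi$ — this is the standard weak-compactness criterion and is valid here because $F^p_\phi$ has the property that norm-bounded sequences have subsequences converging uniformly on compacta (a normal-families argument using Lemma~\ref{Lemma 19} to get local uniform bounds from the $L^p_\phi$ norm).

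So let $\{g_n\}$ be bounded in $F^p_\phi$ with $g_n\to 0$ uniformly on compact subsets of $\C$. I would split $\|H_f g_n\|_{p,\phi}^p \lesssim \|f g_n\|_{p,\phi}^p + \|P(f g_n)\|_{p,\phi}^p$ and treat each term. For the first term, since $\operatorname{supp} f = K$ is compact, $\|f g_n\|_{p,\phi}^p \le \|f\|_\infty^p \int_K |g_n|^p e^{-p\phi}\,dA \to 0$ by uniform convergence on $K$. For the second term, $P(f g_n)(z) = \int_K f(\xi) g_n(\xi) K(z,\xi) e^{-2\phi(\xi)}\,dA(\xi)$, and I would again use that $g_n\to 0$ uniformly on $K$: bounding $|P(f g_n)(z)| e^{-\phi(z)} \le \|f\|_\infty \big(\sup_K |g_n| e^{-\phi}\big) \int_K |K(z,\xi)| e^{-\phi(z)} e^{-\phi(\xi)}\,dA(\xi)$, and then I need the function $z\mapsto \int_K |K(z,\xi)|e^{-\phi(z)-\phi(\xi)}\,dA(\xi)$ to lie in $L^p(\C,\,dA)$ — equivalently that $z\mapsto \rho(z)^{-1}\int_K \rho(\xi)^{-1} e^{-(|z-\xi|/\rho(z))^\epsilon}\,dA(\xi)$ is $L^p$. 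Here is where the doubling geometry enters: for $z$ far from $K$, $|z-\xi|\simeq |z| \to\infty$ while $\rho(z)$ grows at most polynomially in $|z|$ (by \eqref{eq:1}), so the exponential factor $e^{-(|z-\xi|/\rho(z))^\epsilon}$ forces rapid decay; integrability over $\C$ then follows from the same kind of computation as in Lemma~\ref{Lma:2.1}. Combining, $\|P(fg_n)\|_{p,\phi}^p \le C \big(\sup_K|g_n|e^{-\phi}\big)^p \to 0$.

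**Main obstacle.** The delicate point is the case $0<p<1$, where $F^p_\phi$ is only quasi-Banach and where the nesting $F^p_\phi\subset F^q_\phi$ fails, so one cannot simply reduce to $p=2$. One must make sure the $L^p$ estimate of the kernel integral $z\mapsto \int_K |K(z,\xi)|e^{-\phi(z)-\phi(\xi)}\,dA(\xi)$ genuinely holds as a function of $z$ over all of $\C$ (not merely locally) with the doubling $\rho$, and this requires careful bookkeeping with \eqref{eq:1}, precisely the sort of estimate isolated in Lemma~\ref{Lma:2.1} and Corollary~\ref{K-integral}. A clean way to organize this: observe that because $K$ is a fixed compact set, choosing any $\xi_0\in K$ one has $\rho(\xi)\simeq \rho(\xi_0)=:c_0$ and $|z-\xi|\simeq |z-\xi_0|$ uniformly for $\xi\in K$ and $z$ outside a fixed large disk, reducing the kernel integral to $\lesssim A(K)\,\rho(z)^{-1} e^{-(|z-\xi_0|/\rho(z))^\epsilon}$, and then $\int_\C \rho(z)^{-p} e^{-p(|z-\xi_0|/\rho(z))^\epsilon}\,dA(z)<\infty$ is exactly \eqref{rho-integral} with $k=-p$, $l=0$. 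This makes the argument uniform in $p\in(0,\infty)$ and sidesteps any need for Hilbert-space structure. The remaining routine points — the normal-families argument extracting locally uniformly convergent subsequences, and the reduction of compactness to the sequential criterion — are standard and I would state them briefly, citing Lemma~\ref{Lemma 19} for the local bound $|g(z)|e^{-\phi(z)} \le C \big(A(D(z))^{-1}\int_{D(z)}|g|^p e^{-p\phi}\big)^{1/p}$ that converts $L^p_\phi$-boundedness into local uniform boundedness.
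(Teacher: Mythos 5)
Your proposal is correct and, in the only genuinely hard case $0<p<1$, it takes a lighter route than the paper. Both arguments share the same frame: normal families via Lemma~\ref{Lemma 19}, reduction to showing $\|fg_n\|_{p,\phi}\to 0$ and $\|P(fg_n)\|_{p,\phi}\to 0$ along bounded sequences tending to $0$ locally uniformly, and the trivial treatment of the first term. For the second term the paper does not argue pointwise as you do: for $1\le p<\infty$ it quotes boundedness of $P$ on $L^p_\phi$, and for $0<p<1$ it sets $d\mu=|f|\,dA$, uses $\rho(z)\le C|z|^s$ ($s<1$) to see that $\widehat{\mu}$ vanishes outside a disk, invokes the Carleson-type estimate of Lemma~2.4 of \cite{HL17}, pushes the $p$-th power inside the integral by the sub-mean-value device of \eqref{extened-Bergman-projection-1}, and finishes with Fubini and Corollary~\ref{K-integral}. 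Your single pointwise bound of $z\mapsto\int_K|K(z,\xi)|e^{-\phi(z)-\phi(\xi)}\,dA(\xi)$, exploiting that $\rho\simeq\mathrm{const}$ and $|z-\xi|\simeq|z-\xi_0|$ on the fixed compact $K$, treats all $p\in(0,\infty)$ at once and avoids the Carleson machinery and the power trick entirely, which is a genuine simplification; what the paper's route buys is that it recycles estimates already set up for Lemma~\ref{extened-Bergman-projection} and never needs any pointwise control of $e^{-\phi}$ on $K$. Two small points in your write-up should be tightened. First, your final integral $\int_\C\rho(z)^{-p}e^{-p(|z-\xi_0|/\rho(z))^\epsilon}dA(z)$ is not literally \eqref{rho-integral}, since there the $\rho$ in the exponent is that of the fixed point, not of the integration variable; use instead \eqref{eq:02} in the form $|K(z,\xi)|\le C e^{\phi(z)+\phi(\xi)}\rho(z)^{-1}\rho(\xi)^{-1}e^{-(|z-\xi|/\rho(\xi))^{\epsilon}}$ (legitimate because $|K(z,\xi)|=|K(\xi,z)|$), replace $\rho(\xi)$ by $\rho(\xi_0)$ on $K$, and then $\int_\C\rho(z)^{-p}e^{-cp(|z-\xi_0|/\rho(\xi_0))^{\epsilon}}dA(z)\le C\rho(\xi_0)^{2-p}$ really is \eqref{rho-integral} with $k=-p$, $l=0$; this also removes your reliance on growth bounds for $\rho(z)$ with $\rho(z)$ sitting in the exponent. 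Second, pulling out the factor $\sup_K|g_n|e^{-\phi}$ tacitly assumes that $e^{-\phi}$ is bounded on $K$, i.e.\ that $\phi$ is locally bounded below; this does hold when $\Delta\phi\,dA$ is doubling (doubling measures on $\C$ satisfy a reverse-doubling estimate, which makes the local logarithmic potential finite), but it deserves a word, or can be sidestepped by pulling out only $\sup_K|g_n|$ and keeping $e^{-2\phi(\xi)}$ inside the $\xi$-integral, which needs no more than local integrability of $e^{-\phi}$.
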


\begin{proof}
We denote by  $B(F^p_\phi)$  the unit ball of $F^p_\phi$. To show that $H_f$ is compact from $F^p_\phi$ to $L^p _\phi$, we only need to show that $H_f(B(F^p_\phi))$  is relatively compact in $L^p_\phi$. By Lemma~\ref{Lemma 19}, there is some constant $C$ such that, for $f\in B(F^p_\phi)$ and $z\in \C$,
$$
	|f(z)e^{-\phi(z)}|^p\le C \rho(z)^{-2} \|f\|_{p,\phi}^p\le C \rho(z)^{-2}.
$$
This implies that $B(F^p_\phi)$ are uniformly bounded on any compact subset of $\C$. Thus, $B(F^p_\phi)$ is a normal family. Hence, to prove  $H_f(B(F^p_\phi))$  is relatively compact in $L^p_\phi$, it suffices to prove that
\begin{equation}\label{comp-supp-a}
	\lim_{j\to \infty} \|H_f(g_j)\|_{p, \phi}= \lim_{j\to \infty}\| fg_j-P (f g_j) \|_{p, \phi}=0
\end{equation}
for any bounded sequence $\{g_j\}_{j=1}^\infty$ in $F^p_\phi$ converging to 0 uniformly on any compact subset of  $\C$. But this will be an easy consequence of the limits
\begin{equation}\label{comp-supp-c}
	\lim_{j\to \infty} \|f g_j \|_{p, \phi}= 0
\end{equation}
and
\begin{equation}\label{comp-supp-b}
	\lim_{j\to \infty} \|P(f g_j )\|_{p, \phi}= 0.
\end{equation}

The equality(\ref {comp-supp-c}) is trivial for $f\in L^\infty$ with compact support and $\{g_j\}_{j=1}^\infty$ bounded in $F^p_\phi$ converging to 0 uniformly  on compact subsets. As a simple consequence, observe that (\ref{comp-supp-c}) implies  (\ref{comp-supp-b}) for $1\le p<\infty$ because $P$ is bounded on $L^p_\phi$.

It remains to prove (\ref{comp-supp-b}) for $0< p<1$. Without loss of generality, we may assume that the support of $f$ is contained in some $D(0, \sigma)$. Write $d\mu =|f| dA$. As stated on page~869 of~\cite{MMO03}, there are some $C>0$ and $0<s<1$ such that $\rho(z)\le C|z|^s$ for $|z|>1$. Hence
\begin{equation} \label{growth-of-rho}
	\lim_{|z|\to \infty} |z|-\rho(z) =\infty,
\end{equation}
and so there is an $R>0$ so that $\widehat{\mu}(\xi) = \int_{D( \xi)}d\mu/ {A(D(\xi)) }=0$ when $|\xi| \ge R$. Then applying Lemma 2.4 of~\cite{HL17} gives the following estimate:
\begin{align*}
	| P(f g_j)(z) | &\le \int_{\C} | f(\xi)  g_j(\xi) K(z, \xi)|e^{-2\phi(\xi)} dA(\xi)\\
	&= \int_{\C} |   g_j(\xi) K(\xi, z)|e^{-2\phi(\xi)} d\mu(\xi)\\
	&\le C \int_{\C} |   g_j(\xi) K(\xi, z)|e^{-2\phi(\xi)}  \widehat{\mu} (\xi)dA(\xi)\\
	&\le |f\|_{L^\infty} \int_{D(0, R)} |   g_j(\xi) K(\xi, z)|e^{-2\phi(\xi)}   dA(\xi).
\end{align*}
Now, with the same approach as that of (\ref{extened-Bergman-projection-1}) we obtain
$$
	|P(f g_j)(z) |^p \le C \|f\|_{L^\infty}^p \int_{D(0, 2R)} |   g_j(\xi) K(\xi,
        z)|^p e^{-2p \phi(\xi)} \rho(\xi)^{2p-2}  dA(\xi)
$$
where we have  chosen $R$ large enough so  that $|\xi|+ \rho(\xi) \le  2R$ for $|\xi |\le R$. Therefore, by Corollary~\ref{K-integral}, we get
\begin{align*}
	\| P(f g_j)\|_{p, \phi}^p 
	&\le C \|f\|_{L^\infty} ^p\int_{\C} e^{-p \phi(z)} dA(z)  \int_{D(0, 2R)} |g_j(\xi) K(\xi,
        z)|^p \\
        &\qquad\qquad\qquad\qquad\qquad\qquad\qquad\times e^{-2p \phi(\xi)} \rho(\xi)^{2p-2}  dA(\xi)\\
	&=  C\|f\|_{L^\infty}^p \int_{D(0, 2R)} |   g_j(\xi)|^p\rho(\xi)^{2p-2} e^{-2p
   \phi(\xi)}  dA(\xi)\int_{\C}| K(\xi, z)|^p\\ 
	&\qquad\qquad\qquad\qquad\qquad\qquad\qquad\times e^{-p \phi(z)} dA(z)\\
	&\le C \|f\|_{L^\infty}^p \int_{D(0, 2R)} |g_j(\xi)^p  e^{-p \phi(\xi)}  dA(\xi)\to 0
\end{align*}
as $j\to \infty$, which completes the proof.
\end{proof}

The next theorem provides useful properties of Hankel operators with $BO$ (and $VO$) symbols and it plays an important role in the study of Fredholmness of Toeplitz operators. While we give the proof for all values of $p$, we note that for $1\le p<\infty$, the result is also a consequence of Theorem~3.3 of~\cite{HL16}.

\begin{theorem} \label{hankel} 
Suppose that $0<p<\infty$.
\begin{itemize}
\item[(i)] If $f\in BO$, then   $H_f$ is bounded from $F^p_\phi$ to $L^p _\phi$ and we have the following norm estimate:
\begin{equation}\label{hankel-norm}
   \| H_f \|_{F^p_\phi \to L^p_\phi }\le C \|f\|_{BO}.
 \end{equation}

\item[(ii)]   If $f\in VO$, then   $H_f$ is compact from $F^p_\phi$ to $L^p _\phi$.
\end{itemize}
\end{theorem}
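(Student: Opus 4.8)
The plan is to establish (i) by a pointwise estimate that bounds $|H_f g(z)| = |f(z)g(z) - P(fg)(z)|$ by a weighted integral of $|g|$ against the kernel of the operator $G_l$ from Lemma~\ref{extened-Bergman-projection}, exploiting the reproducing property $g(z) = \int_{\C} g(\xi) K(z,\xi) e^{-2\phi(\xi)}\, dA(\xi)$ to write
\[
	H_f g(z) = \int_{\C} \big(f(z) - f(\xi)\big) g(\xi) K(z,\xi)\, e^{-2\phi(\xi)}\, dA(\xi).
\]
The key idea is that the oscillation factor $|f(z) - f(\xi)|$ is controlled by $\|f\|_{BO}$ times a power of the Bergman distance: since $f \in BO$, an iteration along a chain of unit Bergman balls gives $|f(z) - f(\xi)| \le C \|f\|_{BO} (\beta(z,\xi) + 1)$. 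Substituting this, $|H_f g(z)| \le C\|f\|_{BO}\, G_1(|g|)(z)$, and then Lemma~\ref{extened-Bergman-projection} (for $0 < p \le \infty$, $G_l$ is bounded $F^p_\phi \to L^p_\phi$) yields \eqref{hankel-norm} directly.

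For part (ii), suppose $f \in VO$. The plan is to split $f = f h_R + f(1 - h_R)$ using the cutoff functions $h_R$ from Lemma~\ref{compact-supported}, where $h_R \equiv 1$ on $B(0,R)$ and $\|h_R\|_{BO} < \varepsilon$. Then $H_f = H_{f h_R} + H_{f(1-h_R)}$. The first term $H_{f h_R}$ has a symbol that is bounded (since $f \in VO \subset$ functions of polynomial growth, but more carefully $f h_R$ is bounded because $f$ is continuous hence bounded on the compact support region... actually $f$ need not be bounded, so one instead argues that $f h_R \in L^\infty$ with compact support is false in general) — so instead I would write $H_{f h_R}$ differently. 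The cleaner decomposition: fix $\varepsilon > 0$, choose $R_0$ so that $\omega(f)(z) < \varepsilon$ for $\beta(z,0) \ge R_0$; then for large $R$, the function $g_R := f(1 - h_R)$ satisfies $\|g_R\|_{BO} \le C\varepsilon$ (its oscillation is small where it is supported, away from the origin), so $\|H_{g_R}\|_{F^p_\phi \to L^p_\phi} \le C\varepsilon$ by part (i). The complementary piece $f h_R$ is continuous with compact support, hence in $L^\infty(\C)$ with compact support, so $H_{f h_R}$ is compact by Lemma~\ref{hankel-1}. Since $H_f$ is an operator-norm limit of compact operators, it is compact.

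The main obstacle I anticipate is the oscillation-to-distance estimate $|f(z) - f(\xi)| \le C\|f\|_{BO}(\beta(z,\xi)+1)$ and, more delicately, verifying that $f(1-h_R) \in BO$ with small $BO$-norm. The chaining argument requires connecting $z$ to $\xi$ by roughly $\beta(z,\xi)$ unit Bergman balls; this is where the geometry of the Bergman metric on the doubling Fock space (via Lemma~\ref{distance-d} and the comparison \eqref{equi-Bergman-dist}) must be used carefully, since $\rho$ is non-constant and the relation between $\beta$ and Euclidean distance degenerates at infinity. For the product estimate, one needs $\omega(f(1-h_R))(z) \le \omega(f)(z)\|1-h_R\|_\infty + \|f\|_{L^\infty(\mathrm{supp})} \omega(1-h_R)(z)$, which is fine on the support of $1 - h_R$ (where $\beta(z,0)$ is large so $\omega(f)(z)$ is small) but requires knowing $f$ does not blow up too fast relative to $\|h_R\|_{BO} = 1/R$; controlling this product term is the technical heart of (ii), and it uses that $\omega(1-h_R)$ is supported in an annulus of bounded Bergman width and $f$ grows at most polynomially there while $\omega(1-h_R) \sim 1/R \to 0$.
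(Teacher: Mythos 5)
Your part (i) is essentially the paper's own argument: the reproducing formula \eqref{bergman-proj}, the estimate $|f(z)-f(\xi)|\le \|f\|_{BO}(\beta(z,\xi)+1)$ (the paper's \eqref{bo-function}), and then Lemma~\ref{extened-Bergman-projection}; this part is fine, and your worry about $fh_R$ in part (ii) is unfounded, since $f$ is continuous and $h_R$ has compact support, so $fh_R$ is bounded with compact support and Lemma~\ref{hankel-1} applies. The decomposition $f=fh_R+f(1-h_R)$ is also exactly the paper's.

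The gap is in the one step you yourself flag as the technical heart: the claim $\|f(1-h_R)\|_{BO}\le C\varepsilon$, and specifically the product term $|f(w)|\,\omega(h_R)(z)$ on the annulus where $h_R$ oscillates. Your justification (``$f$ grows at most polynomially there while $\omega(h_R)\sim 1/R\to 0$'') does not work: a $BO$ function can grow linearly in the Bergman distance, $|f(w)|\le |f(0)|+\|f\|_{BO}(\beta(w,0)+1)$, so on the annulus $R\le\beta(z,0)\le 2R$ one only gets $|f(w)|\,\omega(h_R)(z)\approx \|f\|_{BO}\,\beta(w,0)/R\approx \|f\|_{BO}$, which is bounded but not small; the product does not tend to $0$ as $R\to\infty$, and then you only obtain $\|H_f-H_{fh_R}\|\le C\|f\|_{BO}$, which gives no compactness. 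The missing idea (supplied in the paper's proof) is a VO-specific growth bound: chaining from a point $\eta$ with $\beta(\eta,0)=r$ (where $r$ is chosen so that $\omega(f)<\varepsilon$ outside $B(0,r)$) along a path realizing $\beta(w,0)=\beta(w,\eta)+\beta(\eta,0)$ yields $|f(w)-f(0)|\le \|f\|_{BO}(r+1)+\varepsilon(\beta(w,\eta)+1)$, hence $|f(w)|/\beta(w,0)<2\varepsilon$ once $\beta(w,0)>R$ for a suitable $R>r$. With this, on the transition annulus
\begin{equation*}
|f(w)|\,\omega(h_R)(z)\le \fr{|f(w)|}{\beta(w,0)}\cdot\fr{\beta(w,0)}{R}\le C\varepsilon,
\end{equation*}
so indeed $\|f-fh_R\|_{BO}\le C\varepsilon$ and the rest of your argument (norm limit of compact operators) goes through. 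In short: same approach as the paper, but the quantitative fact that the growth slope of $f$ at infinity is controlled by the vanishing oscillation $\varepsilon$ rather than by $\|f\|_{BO}$ is exactly what your sketch is missing, and without it the key estimate fails.
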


\begin{proof}
For $f\in BO$, it is easy to verify that
\begin{equation}\label{bo-function}
	|f(z)-f(\xi)|\le \|f\|_{BO}(\beta(z, \xi)+1).
\end{equation}
As shown in the proof of Theorem~3.2 of~\cite{LZZ19}, we have
\begin{equation}\label{bergman-proj}
	P(g)=g\, \textrm{ for }\, g\in F^p_\phi \, \textrm{ with }\, 0<p\le \infty.
\end{equation}
Then
\begin{align*}
	| H_f(g)(z)| 
	&\le \int_{\C} | f(\xi)-f(z) ||g(\xi)||K(z, \xi)| e^{-2\phi(\xi)} dv(\xi)\\
	& \le C\|f\|_{BO} \int_{\C} \left (\beta(z, \xi) +1\right) |g(\xi)||K(z, \xi)| e^{-2\phi(\xi)} dv(\xi).
\end{align*}
Therefore, Lemma \ref{extened-Bergman-projection} implies that,  for $0<p<\infty$, $H_f$ is bounded from $F^p_\phi$ to $L^p_\phi$ with the norm estimate \eqref{hankel-norm}.

Now we  suppose $f\in VO$.  For $\varepsilon>0$, fix $r>0$ so that $\omega(f)(w)<\varepsilon$ whenever $\beta(w, 0)\ge r$. Then, for $w$ with $\beta(w, 0)>r$, we have some $\eta(w)$, $\beta(\eta, 0) =r$ so that $\beta(w, 0)=\beta(w, \eta)+\beta(\eta, 0)$. Hence, by
$$
	|f(w)-f(0)|\le \|f\|_{BO}(\beta(\eta, 0)+1) +\varepsilon (\beta(w, \eta)+1)
$$
there is an $R>r$   so that, when $\beta(w, 0)>R$,
$$
	\fr {|f(w)|}{\beta(w, 0)} \le \fr {\|f\|_{BO}  (r+1) +|f(0)| } {\beta(w, 0)}+\varepsilon \fr{r+1} {\beta(w, 0)} <2\varepsilon.
$$
Set $f_R= f h_R $ with $h_R$ to be as in Lemma \ref{compact-supported}. For such $f_R$, it is easy to see that $\omega(f-f_R)(z)=0$ when $\beta(z, 0)<R-1$, and  $\omega(f-f_R)(z)=\omega(f)(z)$ when $\beta(z, 0)>2R+1$. For those $z$ that satisfy $R-1\le \beta(z, 0)\le 2R+1$ and $w\in \beta(z, 1)$, we have
\begin{align*}
 	&\left|(f(z)-f_R(z))-(f(w)-f_R(w))\right|\\
 	&\leq |f(w)|\left|h_R(z)-h_R(w)\right|+(1-h_R(z))\left|f(w)-f(z)\right| \\
	&\leq |f(w)|\omega (h_R)(z)+\omega (f)(z) \leq  |f(w)|\frac{1}{R }+\omega (f)(z)\\
	&=\frac{|f(w)|}{\beta(w, 0) }\frac{\beta(w, 0) }{R }+\omega (f)(z) \leq  6\varepsilon+\varepsilon.
\end{align*}
Therefore, by (\ref{hankel}),
\begin{equation}\label {compact-approximation}
	\| H_f-H_{f_R}   \|_{F^p_\phi \to L^p_\phi}\le C \| f-f_R  \|_{BO}\le C \varepsilon,
\end{equation}
where the constant $C$ is independent of  $\epsilon$. Notice that $H_{f_R} $ is compact, and so is $H_f $  since the family of all compact linear operators from $F^p_\phi$ to $L^p_\phi$ is closed under the operator norm. The proof is completed.
\end{proof}

\begin{remark}\label{bo-continuity}
A careful check of  the preceding proof shows that the continuity of $f \in BO$ (or $VO$) is not needed for the conclusion of Theorem~\ref{hankel}.
\end{remark}

\section{Fredholm theory}\label{Fredholm theory}\label{Fredholm section}
A linear mapping $T$ on a topological vector space $X$ is said to be Fredholm if
$$
	\dim \ker T < \infty\ {\rm and}\ \dim X/T(X) < \infty.
$$
When $X$ is a Banach space, it is well known that $T$ is Fredholm if and only if $T+K(X)$ is invertible in the Calkin algebra $B(X)/K(X)$, where $B(X)$ and $K(X)$ stand for the spaces of bounded and compact operators, respectively. From this, it follows that an operator on a Banach space is Fredholm if and only if there are bounded operators $A$ and $B$ on $X$ such that
$$
	AT = I + K_1\ {\rm and}\ TB = I + K_2
$$
for some compact operators $K_1$ and $K_2$ acting on $X$. Because two Toeplitz operators often commute modulo compact operators, the previous characterization for their Fredholmness is almost tailor-made for large classes of symbols.

These characterizations of Fredholm operators are not true in general if $X$ is not a Banach space. However, an adequate theory can still be developed for quasi-Banach spaces under some additional conditions, which is important in certain PDE problems; see, e.g.~\cite{MR1443193}. A pair $(X, \|\cdot\|)$ is said to be a quasi-Banach space if $\|\cdot\|$ satisfies all the properties of a norm except for the triangle inequality and if there is a constant $C>0$ such that
$$
	\|x+y\| \leq C (\|x\| + \|y\|)
$$
for all $x,y\in X$. Observe that all generalized Fock spaces $F^p_\phi$ are quasi-Banach spaces. We now define an additional property for quasi-Banach spaces as in~\cite{MR1419319}.

\begin{definition}\label{quasi-Banach}
A quasi-Banach space $X$ is said to be dual rich if for all nonzero vectors $x\in X$, there is a continuous linear functional $x^*$ such that $x^*(x) = 1$.
\end{definition}

As an example, we mention that every Banach space is dual rich, and so are $\ell^p$ with $0<p<1$, while none of the $L^p(\C^n, dv)$ spaces with $0<p<1$ is dual rich; see~\cite{MR1419319}.

For the Fock space $F^p_\phi$ with a small exponent, we have the following lemma which  is an easy consequence of Theorem 5.1 of~\cite{LZZ19}.

\begin{lemma}\label{dual rich Fock}
If $0<p<1$, then the  Fock space $F^p_\phi$ is a dual rich quasi-Banach space.
\end{lemma}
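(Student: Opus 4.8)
The plan is to produce, for each nonzero $f \in F^p_\phi$, an explicit continuous linear functional taking the value $1$ at $f$; the natural candidate is a normalized point-evaluation functional, and the role of Theorem~5.1 of~\cite{LZZ19} (together with the reproducing property~\eqref{bergman-proj}) is to guarantee that such evaluations act continuously on $F^p_\phi$ even in the quasi-Banach range $0<p<1$.

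First I would observe that by Lemma~\ref{Lemma 19} with $r=1$, applied to $g \in F^p_\phi$, one has $\left|g(z)e^{-\phi(z)}\right|^p \le C\rho(z)^{-2}\|g\|_{p,\phi}^p$, so that for each fixed $z\in\C$ the evaluation map $\delta_z : F^p_\phi \to \C$, $\delta_z(g) = g(z)$, is a bounded, hence continuous, linear functional on $F^p_\phi$, with $|\delta_z(g)|\le C\rho(z)^{-2/p}e^{\phi(z)}\|g\|_{p,\phi}$. Equivalently, as in Theorem~5.1 of~\cite{LZZ19}, $\delta_z$ is represented by the reproducing kernel via $\delta_z(g) = \int_\C g(w)\overline{K(w,z)}\,e^{-2\phi(w)}\,dA(w)$, which follows from $P g = g$ for $g\in F^p_\phi$ (see~\eqref{bergman-proj}), the integral converging absolutely since $K(\cdot,z)\in F^\infty_\phi$ by Corollary~\ref{L-infty-norm}. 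Either way we obtain a family of continuous linear functionals on $F^p_\phi$ indexed by $\C$.

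Next, given a nonzero $f\in F^p_\phi$, holomorphicity together with the identity theorem supplies a point $z_0\in\C$ with $f(z_0)\neq 0$. Setting $f^* = f(z_0)^{-1}\delta_{z_0}$ yields a continuous linear functional on $F^p_\phi$ with $f^*(f) = 1$. Since $f$ was arbitrary among nonzero vectors, and since $F^p_\phi$ is a quasi-Banach space under $\|\cdot\|_{p,\phi}$ as already noted, it follows that $F^p_\phi$ is dual rich in the sense of Definition~\ref{quasi-Banach}, completing the proof.

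There is no serious obstacle here: the only ingredient beyond elementary complex analysis is the continuity of the point evaluations on the quasi-Banach space $F^p_\phi$, which is exactly what Lemma~\ref{Lemma 19} (or Theorem~5.1 of~\cite{LZZ19}) provides. The one subtlety worth stating carefully is that the argument must not appeal to the Hahn--Banach theorem, which can fail on quasi-Banach spaces; this is avoided because $f^*$ is exhibited concretely as a normalized evaluation rather than obtained by an abstract extension.
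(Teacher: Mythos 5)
Your argument is correct, and it is more self-contained than the paper's treatment: the paper simply derives the lemma as a consequence of the duality theorem (Theorem~5.1 of~\cite{LZZ19}), which identifies continuous functionals on $F^p_\phi$ via the kernel pairing, whereas you construct the separating functional directly as a normalized point evaluation, using only the pointwise estimate of Lemma~\ref{Lemma 19} to get $|g(z)|\le C\rho(z)^{-2/p}e^{\phi(z)}\|g\|_{p,\phi}$ and then the identity theorem to find $z_0$ with $f(z_0)\neq 0$. This buys you an elementary proof that bypasses the duality machinery (and, as you rightly note, avoids any Hahn--Banach-type extension, which is unavailable here); what the paper's route buys is brevity, since the heavy lifting is outsourced to~\cite{LZZ19}, and in that reference the functionals are in effect the same kernel/evaluation functionals. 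One small caution about your side remark: the claim that the integral $\int_\C g(w)\overline{K(w,z)}e^{-2\phi(w)}\,dA(w)$ converges absolutely ``since $K(\cdot,z)\in F^\infty_\phi$'' implicitly uses $g\in L^1_\phi$, which is not automatic for $g\in F^p_\phi$ with $p<1$ because the nesting $F^p_\phi\subset F^1_\phi$ can fail for doubling weights; absolute convergence does hold, but one should argue via the pointwise decay of $K(\cdot,z)$ from Lemma~\ref{basic-est} combined with the pointwise bound on $g$ and Lemma~\ref{Lma:2.1}. Since that representation is not needed for your proof (the direct bound on $\delta_z$ already gives continuity), this does not affect the validity of the argument.
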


The following result is needed  when we characterize Fredholm operators on $F^p_\phi$ for $0<p<1$.

\begin{theorem}\label{dual rich Fredholm}
A bounded linear operator on a dual rich quasi-Banach space $X$ is Fredholm if and only if it has a regularizer; that is, there exists a bounded linear operator $S$ on $X$ such that $ST-I$ and $TS-I$ are both compact on $X$.
\end{theorem}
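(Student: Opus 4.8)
The plan is to adapt the classical Atkinson-type argument for Fredholm operators, working around the absence of a well-behaved Calkin algebra by exploiting the dual-rich hypothesis. One direction is trivial: if $T$ has a regularizer $S$, then $ST = I + K_1$ and $TS = I + K_2$ with $K_1, K_2$ compact, and standard functional-analytic reasoning (valid in any topological vector space once one knows $I+K$ has finite-dimensional kernel and cokernel for $K$ compact) shows $\ker T \subseteq \ker(ST) = \ker(I+K_1)$ is finite-dimensional while $T(X) \supseteq TS(X) = (I+K_2)(X)$ has finite codimension; hence $T$ is Fredholm. The content is the forward implication.

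So suppose $T$ is Fredholm on the dual-rich quasi-Banach space $X$. First I would handle the kernel: since $N = \ker T$ is finite-dimensional, it is complemented in $X$. This is where dual richness is essential---for each basis vector of $N$ one produces a continuous functional dual to it, and by a Gram--Schmidt-type adjustment (using that $N$ is finite-dimensional, so only finitely many functionals are involved and linear algebra suffices) one obtains continuous functionals $x_1^*,\dots,x_n^*$ on $X$ with $x_i^*(e_j) = \delta_{ij}$ for a basis $e_1,\dots,e_n$ of $N$. Then $Q = \sum_i e_i \otimes x_i^*$ is a bounded finite-rank projection onto $N$, so $X = N \oplus X_0$ with $X_0 = \ker Q$ closed. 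Second, the range $R = T(X)$ has finite codimension; being of finite codimension it is automatically closed (a quasi-Banach analogue of the standard fact), and one may pick a finite-dimensional algebraic complement $Z$ with $X = R \oplus Z$, with the projection onto $R$ along $Z$ bounded because $Z$ is finite-dimensional (again finitely many functionals suffice, and on a finite-dimensional space all linear functionals are continuous).

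With these splittings in hand, the restriction $T|_{X_0} : X_0 \to R$ is a continuous bijection between quasi-Banach spaces, hence a homeomorphism by the open mapping theorem (valid for quasi-Banach spaces, which are complete metrizable topological vector spaces---$F$-spaces---so the closed graph / open mapping theorems apply). Let $S_0 : R \to X_0$ be its continuous inverse, and extend $S_0$ to all of $X$ by setting it equal to $0$ on $Z$; call the resulting bounded operator $S : X \to X$. Then a direct computation on the two summands gives $ST = I - Q$ on $X$ (since on $X_0$ we have $STx = S_0 T x = x$ and on $N$ we have $STx = 0$), and $TS = I - P_Z$ where $P_Z$ is the bounded projection onto $Z$ along $R$ (since on $R$, $TSx = T S_0 x = x$, and on $Z$, $TSx = 0$). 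Both $Q$ and $P_Z$ are finite-rank, hence compact, so $S$ is the desired regularizer.

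The main obstacle is verifying that the open mapping theorem and the complementation arguments genuinely survive the passage from Banach to quasi-Banach spaces, and in particular that finite-codimensional subspaces of an $F$-space are closed and complemented by a \emph{bounded} projection---the latter is exactly where the dual-rich hypothesis is used (without it one cannot even guarantee enough continuous functionals to build the finite-rank projection $Q$ onto $\ker T$). I would isolate the statement ``a finite-dimensional subspace of a dual-rich quasi-Banach space is complemented via a bounded projection'' as a preliminary lemma, proved by the functional-extraction argument sketched above, and then the rest of the proof is formally identical to the classical Banach-space Atkinson argument. A minor point to check is that $I + K$ for compact $K$ on a quasi-Banach $F$-space still has finite-dimensional kernel and closed finite-codimensional range (the Riesz theory of compact operators extends to $F$-spaces), which secures the easy direction.
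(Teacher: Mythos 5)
The paper does not actually supply a proof of this theorem: it simply refers to Section~3.5.1 of Runst--Sickel~\cite{MR1419319}, so there is no in-text argument to compare yours against. What you propose is the standard Atkinson-type argument adapted to quasi-Banach spaces, and it is correct in outline: the easy direction via the Riesz theory of compact operators (which is valid on complete metrizable, not necessarily locally convex, spaces), and the hard direction by complementing $\ker T$ with a bounded finite-rank projection built from continuous functionals, complementing the (automatically closed) finite-codimensional range through the finite-dimensional Hausdorff quotient $X/T(X)$, and inverting $T|_{X_0}\colon X_0\to T(X)$ by the open mapping theorem for $F$-spaces. You also locate correctly where dual richness enters: only in the kernel step --- the range step needs no functionals on $X$ because every linear map on the finite-dimensional Hausdorff quotient is continuous; this is the genuine content, since for instance in $L^p$ with $0<p<1$ a one-dimensional subspace admits no bounded projection at all. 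Two points deserve tightening. First, producing $x_1^*,\dots,x_n^*$ with $x_i^*(e_j)=\delta_{ij}$ is not quite ``one functional per basis vector plus Gram--Schmidt'': the restrictions to $N=\ker T$ of functionals normalized at the $e_i$ may be linearly dependent, so the correct argument is that dual richness, applied to \emph{every} nonzero vector of $N$, shows the restriction map from the continuous dual of $X$ to the algebraic dual of $N$ has separating, hence (by finite-dimensionality) full, image; the dual system then exists. Second, the quasi-Banach versions of the open mapping theorem, of ``a finite-codimensional range of a bounded operator is closed,'' and of the Riesz theory should be justified or cited (e.g.\ from \cite{MR1419319}), since these are exactly the places where completeness and metrizability replace local convexity. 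With these adjustments your argument is a sound, self-contained substitute for the citation.
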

\begin{proof}
See Section 3.5.1 of~\cite{MR1419319}.
\end{proof}

Before we can embark on the proof of our main theorem, we need a few preliminary results that illustrate the role played by the Berezin transform and the normalized reproducing kernel function.

\begin{lemma} \label{LeBMO1}  Let  $f\in VO$. Then
\begin{equation}\label{vo-z}
	\lim_{z\to \infty}  \left(f- \widetilde{f}\, \right)  (z)=0.
\end{equation}
\end{lemma}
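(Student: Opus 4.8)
The plan is to estimate the difference $f(z) - \widetilde f(z)$ directly using the fact that the Berezin transform is an average against the probability density $|k_z(w)|^2 e^{-2\phi(w)}$, which has total mass $1$ by the reproducing property. First I would write
\[
	f(z) - \widetilde f(z) = \int_{\C} \bigl(f(z) - f(w)\bigr)\, |k_z(w)|^2 e^{-2\phi(w)}\, dA(w),
\]
using $\int_{\C} |k_z(w)|^2 e^{-2\phi(w)}\, dA(w) = 1$. Taking absolute values and applying the bound \eqref{bo-function} for $BO$ functions, namely $|f(z)-f(w)| \le \|f\|_{BO}(\beta(z,w)+1)$, reduces matters to controlling
\[
	\int_{\C} \bigl(\beta(z,w)+1\bigr)\, |k_z(w)|^2 e^{-2\phi(w)}\, dA(w),
\]
which by the definition $|k_z(w)|^2 = |K(w,z)|^2 / K(z,z)$ and the lower bound $K(z,z) \ge C\rho(z)^{-2}e^{2\phi(z)}$ from \eqref{eq:03} is dominated by Corollary~\ref{K-integral} with $p=2$, $l=1$. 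This shows the integral is bounded uniformly in $z$, giving $f - \widetilde f \in L^\infty$ — but that alone is not the claim.

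The point where the $VO$ hypothesis (rather than merely $BO$) must enter is in extracting the \emph{vanishing} at infinity, and this is the step I expect to require the most care. The idea is to split the integral at a Bergman ball $B(z,R)$: on $B(z,R)$ we use that $\omega_{R}^*(f)$ is small for $z$ large (more precisely, $f\in VO$ gives $\sup\{|f(z)-f(w)| : \beta(z,w)<R\} \to 0$ as $z\to\infty$ for each fixed $R$, after reconciling the ball definition with the $D^r$ definition via Lemma~\ref{distance-d}), while on $\C\setminus B(z,R)$ we again invoke \eqref{bo-function} together with the tail estimate \eqref{rho-integral-z} of Corollary~\ref{K-integral}, which says exactly that
\[
	\sup_{z\in\C} \rho(z)^{2}e^{-2\phi(z)} \int_{\C\setminus B(z,R)} \bigl(\beta(z,w)+1\bigr)\, |K(w,z)|^2 e^{-2\phi(w)}\, dA(w) \to 0
\]
as $R\to\infty$. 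Thus, given $\varepsilon>0$, one first chooses $R$ large so that the tail contribution is at most $\varepsilon\|f\|_{BO}$ uniformly in $z$, and then — with $R$ now fixed — uses $f\in VO$ to make the inner-ball contribution at most $\varepsilon$ for all $z$ outside a sufficiently large compact set.

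Combining the two pieces gives $\limsup_{z\to\infty} |f(z)-\widetilde f(z)| \le C\varepsilon$ for every $\varepsilon>0$, hence the limit is zero. The main obstacle is the bookkeeping in the split: one must be careful that the constant $C$ multiplying $\varepsilon\|f\|_{BO}$ in the tail estimate does not depend on $R$ (it does not, since \eqref{rho-integral-z} is a supremum over $z$ with the normalization built in), and one must correctly pass between the local oscillation $\omega(f)$ defined on Bergman balls $B(z,r)$ and the estimates phrased in terms of $D^r(z)$, which is exactly the content of the remark following \eqref{e:local oscillation2} combined with Lemma~\ref{distance-d}. No continuity of $f$ beyond membership in $VO$ is needed, in keeping with Remark~\ref{bo-continuity}.
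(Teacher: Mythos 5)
Your proposal is correct and follows essentially the same route as the paper's proof: write $f-\widetilde f$ as an average of $f(z)-f(\xi)$ against the unit-mass density $|k_z|^2e^{-2\phi}$, split at a Bergman ball $B(z,R)$, use the uniform tail decay (you via \eqref{rho-integral-z} of Corollary~\ref{K-integral} plus the lower bound $K(z,z)\gtrsim \rho(z)^{-2}e^{2\phi(z)}$, the paper equivalently via Lemma~\ref{basic-est} and \eqref{rho-integral-m}) together with the $BO$ bound \eqref{bo-function} on the tail, and then use the $VO$ hypothesis on the inner ball for $|z|$ large. The only difference is which packaging of the same tail estimate is cited, so this is not a genuinely different argument.
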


\begin{proof} We start by applying Lemmas~\ref{Lma:2.1} and~\ref{basic-est} to get
\begin{multline*}
	\sup_{z\in \C} \int_{\C  \setminus B(z,R)} (\beta(z, \xi)+1) |k_z(\xi)|^2 e^{-2 \phi(\xi)} dv(\xi) \\
	\le C \sup_{z\in \C} \int_{\C  \setminus B(z,R)} (\beta(z, \xi)+1)  \rho^{-2}(\xi)   e^{ - 2 \left(\frac{|z-\xi|}{\rho(z)}\right)^{\epsilon} }dA(\xi) \to 0
\end{multline*}
as $R\to \infty$. Thus,
for each $\varepsilon>0$, there is an $R>0$ such that
\begin{equation}\label{vo-c}
	\int_{\C \setminus B(z,R)} (\beta(z, \xi)+1) |k_z(\xi)|^2 e^{-2 \phi(\xi)} dv(\xi)<\varepsilon
\end{equation}
for all $z\in \C$. Since $f\in VO$,  there is some $\rho>0$ such that
$$
	\sup_{\xi\in B(z,R)} |f(\xi)- f(z)|<\varepsilon
$$
whenever $|z|>\rho$. Notice also that $\int_{\C} |k_z(\xi)|^2 e^{-2 \phi(\xi)} dv(\xi)=1$.
Thus, for  $|z|>\rho$,
\begin{align*}
 \left|f- \widetilde{f}\, \right|  (z)&\le  \int_{\C} \left|f(z)- f
      (\xi)\right||k_z(\xi)|^2 e^{-2 \phi(\xi)} dv(\xi)\\
 &\le \left\{ \int_{B(z, R)} +   \int_{\C \setminus B(z, R)}\right\}
      \left|f(z)- f (\xi)\right||k_z(\xi)|^2 e^{-2 \phi(\xi)} dv(\xi)\\
 &\le \varepsilon+  \|f\|_{BO} \int_{\C \setminus  B(z, R)}
       \left(\beta(\xi, z)+1 \right)|k_z(\xi)|^2 e^{-2 \phi(\xi)} dv(\xi)\\
  &\le\left( 1+    \|f\|_{BO} \right)  \varepsilon,
\end{align*}
which gives (\ref{vo-z}) and hence the proof is complete.
\end{proof}

\begin{lemma}\label{fredholm-lamma}
Let $0<p<\infty$ and let $f\in VO$. If $z_j\in \C$,
$$
	\lim_{j\to \infty} z_j=\infty, \verb#  # \lim_{j\to \infty} f( z_j)=0,
$$
then
\begin{equation}\label{vmo-1}
	\lim_{k\to \infty} \|T_{f}(k_{z_j, p})\|_{p, \phi} =0.
\end{equation}
\end{lemma}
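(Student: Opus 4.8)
The plan is to estimate $\|T_f(k_{z_j,p})\|_{p,\phi}$ by splitting $f$ near $z_j$ into its value $f(z_j)$ plus an oscillation term, and to control the oscillation term via the Hankel operator bounds from Theorem~\ref{hankel}. Write $g_j = k_{z_j,p}$ and decompose $f g_j = (f - f(z_j)) g_j + f(z_j) g_j$. Applying the Bergman projection $P$ and using that $P g_j = g_j$ (since $g_j \in F^p_\phi$, by \eqref{bergman-proj}), we get
\begin{equation*}
	T_f(g_j) = P\big((f - f(z_j)) g_j\big) + f(z_j) g_j.
\end{equation*}
The second term has $\|f(z_j) g_j\|_{p,\phi} = |f(z_j)| \to 0$ by hypothesis, using that the normalized kernel has unit norm. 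So the heart of the matter is to show $\|P((f - f(z_j)) g_j)\|_{p,\phi} \to 0$.

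First I would reduce to a localized symbol. Fix $\varepsilon > 0$; since $f \in VO$, there is $r > 0$ with $\omega(f)(w) < \varepsilon$ whenever $\beta(w,0) \geq r$. As in the proof of Theorem~\ref{hankel}(ii), replace $f$ by $f_R = f h_R$ (with $h_R$ as in Lemma~\ref{compact-supported}) for $R$ large: then $\|f - f_R\|_{BO} \leq C\varepsilon$, so $H_{f - f_R}$ has small norm and, since $P((f-f_R-(f-f_R)(z_j))g_j) = (f-f_R)g_j - H_{f-f_R}(g_j)$ has $L^p_\phi$-norm controlled by $\|(f-f_R)g_j\|_{p,\phi} + C\varepsilon$, one still needs to handle the purely local piece. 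The cleaner route is: write $f - f(z_j) = (f - f_R) + (f_R - f(z_j))$ and note $\|P((f-f_R)g_j)\|_{p,\phi}$ is small because $f - f_R \in BO$ with small norm, combined with $T_{f-f_R}$ bounded on $F^p_\phi$ (which follows from boundedness of $H_{f-f_R}$ and of $P$ on the relevant space) — or, more honestly, bound it pointwise by $G_0$ applied to $|f-f_R|\,|g_j|$ using the kernel estimate \eqref{eq:02}, then use Lemma~\ref{extened-Bergman-projection}. What remains is $P(f_R g_j) - f(z_j) P(h_R g_j)$, where $f_R g_j = f_R g_j$ has compact support in a fixed ball $D(0,\sigma_R)$ independent of $j$, and likewise $h_R g_j$. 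For compactly supported symbols, $\|P(f_R g_j)\|_{p,\phi} \to 0$ as $j \to \infty$ by exactly the argument in Lemma~\ref{hankel-1} (the estimate there only uses that $g_j \to 0$ uniformly on compacta and $\{g_j\}$ is bounded in $F^p_\phi$, and $g_j = k_{z_j,p} \to 0$ uniformly on compacta since $z_j \to \infty$ by Lemma~\ref{basic-est}(A)). The same applies to $P(h_R g_j)$, and $f(z_j)$ is bounded (in fact $\to 0$). Combining, $\limsup_j \|T_f(g_j)\|_{p,\phi} \leq C\varepsilon$, and letting $\varepsilon \to 0$ finishes the proof.

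The main obstacle I anticipate is the case $0 < p < 1$: the triangle-inequality splitting of $f g_j$ still works up to the quasi-norm constant, but controlling $\|P((f-f_R)g_j)\|_{p,\phi}$ requires the $F^p_\phi \to L^p_\phi$ boundedness machinery (Lemma~\ref{extened-Bergman-projection} and estimate \eqref{extened-Bergman-projection-1}) rather than boundedness of $P$ on $L^p_\phi$, which fails; one must pointwise dominate $|P((f-f_R)g_j)(z)|$ by $G_0(|f-f_R|\,|g_j|)(z)$ via \eqref{eq:02} and then invoke Lemma~\ref{extened-Bergman-projection} together with $\|f - f_R\|_{BO} \leq C\varepsilon$. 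The compactly supported piece in the $p<1$ case is handled precisely as in Lemma~\ref{hankel-1}, using Corollary~\ref{K-integral} and the growth bound $\rho(z) \leq C|z|^s$ to ensure $\widehat{\mu}(\xi) = 0$ for $|\xi|$ large. With these pieces assembled, the conclusion follows.
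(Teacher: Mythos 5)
Your overall skeleton (subtract $f(z_j)$, localize the symbol, dispose of the compactly supported piece via the argument of Lemma~\ref{hankel-1}, and use $\|f(z_j)k_{z_j,p}\|_{p,\phi}=|f(z_j)|\to0$) is reasonable, and the compact-piece part is fine. The gap is the pivotal step: the claim that $\|P((f-f_R)g_j)\|_{p,\phi}\le C\varepsilon$ \emph{because} $\|f-f_R\|_{BO}\le C\varepsilon$. The $BO$ quantity is a seminorm measuring oscillation only; it controls the Hankel part $\|H_{f-f_R}\|_{F^p_\phi\to L^p_\phi}$ (Theorem~\ref{hankel}), but not the multiplication/Toeplitz part. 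A counterexample to the inference, built entirely from the paper's own objects, is $g=1-h_R$: by Lemma~\ref{compact-supported} its $BO$ seminorm is $<\varepsilon$, yet $P\big(g\,k_{z_j,p}\big)=k_{z_j,p}-P\big(h_R k_{z_j,p}\big)$ has $\|\cdot\|_{p,\phi}\to 1$ as $j\to\infty$, since $\|P(h_Rk_{z_j,p})\|_{p,\phi}\to0$ by the very Lemma~\ref{hankel-1}-type argument you invoke. So no bound of the form $\|P((f-f_R)g_j)\|_{p,\phi}\lesssim\|f-f_R\|_{BO}$ can hold; the smallness must come from the hypotheses $f(z_j)\to0$ and $f\in VO$, which your argument never uses at this point. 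The ``more honest'' fallback you propose (dominate by $G_0(|f-f_R|\,|g_j|)$ via \eqref{eq:02} and invoke Lemma~\ref{extened-Bergman-projection}) has the same defect and an additional one: for $0<p<1$ that lemma is proved only for holomorphic inputs (the subharmonicity trick of Lemma~\ref{Lemma 19} is applied to $f\in H(\C)$), and $|f-f_R|\,|g_j|$ is not holomorphic; even if it applied, it would give boundedness by $\|(f-f_R)g_j\|_{p,\phi}$, whose smallness is exactly what remains unproved.

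What is actually needed — and what the paper does — is a near/far splitting centered at $z_j$, not at the origin: estimate the integral of $|f(\xi)-f(z_j)|\,|k_{z_j,p}(\xi)|\,|K(\xi,z)|e^{-2\phi(\xi)}$ by separating $B(z_j,R)$ from its complement; on $B(z_j,R)$ use the $VO$ modulus, $\sup_{\xi\in B(z_j,R)}|f(\xi)-f(z_j)|<\varepsilon$ for $j$ large, and on the complement use the pointwise bound $|f(\xi)-f(z_j)|\le\|f\|_{BO}(\beta(\xi,z_j)+1)$ from \eqref{bo-function} together with the uniform tail estimate \eqref{rho-integral-z} of Corollary~\ref{K-integral} applied to the normalized kernel (this is \eqref{vo-a} in the paper). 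For $0<p\le1$ this is preceded by a sub-mean-value/lattice reduction in the spirit of \eqref{extened-Bergman-projection-1} (applied to the holomorphic factor $k_{z_j,p}(\cdot)K(\cdot,z)$) and Fubini, and the term $f(z_j)k_{z_j,p}$ is handled with Corollary~\ref{alsolute-projection}; no cutoff $h_R$ at the origin enters at all. Your proof could be repaired by inserting precisely this near/far estimate for $\|(f-f_R)g_j\|_{p,\phi}$ (and then writing $P((f-f_R)g_j)=(f-f_R)g_j-H_{f-f_R}g_j$ with $\|H_{f-f_R}g_j\|_{p,\phi}\le C\varepsilon$), but as written the decisive estimate is missing and the justification offered for it is false.
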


\begin{proof}
We treat the case  $0<p\le 1$ first.
Suppose $f\in VO$.
For $\varepsilon>0$, Corollary  \ref{K-integral} gives some $R>1$ such that
\begin{equation}\label{vo-a}
	\int_{\C \setminus B(z, R)} ( \beta(z, \xi)+1)^p \rho(\xi)^{ 2p-2} |k_{z, p}( \xi)|^p     e^{- p \phi(\xi)} dA(\xi)
	\le \left(\fr {\varepsilon}{2 \| f \|_{BO}+1}\right)^p
\end{equation}
for all $z\in \C$. Furthermore, for the fixed $\varepsilon$ and $R$, we  have some $\rho>0$ so that
$$
	\sup_{\xi\in B(z, R)} | f  (\xi)-  {f }(z)|<\varepsilon
$$
whenever  $|z|>\rho$. Notice also that $(f(\cdot)-f(z_j))k_{z_j, p}(\cdot) K(\cdot, z)\in H(\C)$. With a similar approach as that of proving (\ref{extened-Bergman-projection-1}), we have
\begin{multline*}
	\left\{ \int_{\C} \left |{f }(\xi)-  {f }(z_j) \right| |k_{z_j, p }(\xi)||K(\xi, z)| e^{-2\phi(\xi)} dA(\xi) \right\}^p\\
	\le C \int_{\C} \left | {f }(\xi)-  {f }(z_j) \right|^p |k_{z_j, p }(\xi)|^p |K(\xi, z)|^p  e^{-2p\phi(\xi)}\rho(\xi)^{ 2p-2} dA(\xi).
\end{multline*}
Then for $|z_j|>\rho$, by Corollary \ref{K-integral}, we get
\begin{align*}
	&\int_{\C} \left( \int_{\C} \left |{f }(\xi)-  {f }(z_j) \right| |k_{z_j, p }(\xi)||K(\xi, z)| e^{-2\phi(\xi)} dA(\xi) \right)^p e^{-p\phi(z)} dA(z)\\
	&\leq C \int_{\C} \left | {f }(\xi)-  {f }(z_j) \right|^p |k_{z_j, p }(\xi)|^p e^{-2p\phi(\xi)}\\
	&\qquad\qquad\times \rho(\xi)^{ 2p-2} dA(\xi)\int_{\C} |K(\xi, z)|^p e^{-p\phi(z)} dA(z)\\
	&=C \int_{\C} \left | {f }(\xi)-  {f }(z_j) \right|^p |k_{z_j, p }(\xi)|^p e^{-p\phi(\xi)} dA(\xi)\\
	&\le C \Bigg( \varepsilon^p \left \|  k_{z_j, p }\right\|_{p, \phi}^p\\  
	&\qquad\qquad + \|f\|_{BO}^p \int_{\C\setminus B(z_j, R)} (\beta(\xi, z)+1)^p |k_{z_j, p }(\xi)|^p  e^{-p\phi(\xi)} dA(\xi) \Bigg )\\
   &\le C \varepsilon^p.
\end{align*}
The constants $C$ above are independent of $\varepsilon$. This, together with  Lemma \ref{Lma:2.1}, Corollary \ref{alsolute-projection} and the obvious inequality
$$
	\left|  T_{f}(k_{z_j, p })(z) \right|
	\le  \int_{\C} \Big(|f(\xi)- f(z_j)| +  |f(z_j)| \Big) |k_{z_j, p }(\xi)|K(\xi, z)| e^{-2\phi(\xi)} dv(\xi),
$$
implies
$$
	\limsup_{j\to \infty} \left \| T_f(k_{z_j, p })  \right\|_{p, \phi}^p 
	\le C\varepsilon^p + \limsup_{j\to \infty}  |f(z_j)|^p \left\| P_+(|k_{z_j, p }|\right\|_{p, \phi}^p = C \varepsilon^p,
$$
which gives \eqref{vmo-1} and completes the proof.
\end{proof}

\begin{lemma}\label{compact-berezin}
If $0<p<\infty$ and  $T$ is  compact linear operator on $F^p_\phi$, then
\begin{equation}\label{berezin-trans}
	\lim_{z\to \infty} \widetilde{T}(z)=0,
\end{equation}
where $\tilde T$ is the Berezin transform of the operator $T$ defined by $\widetilde{T}(z)= \langle Tk_z, k_z \rangle$ for $z\in \C$ and $\langle \cdot, \cdot \rangle$ is the inner product on $F^2_\phi$.
\end{lemma}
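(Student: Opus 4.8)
The plan is to test compactness of $T$ against the normalized reproducing kernels $k_z$ and use the fact that $k_z \to 0$ weakly in $F^2_\phi$ as $z\to\infty$. First I would establish this weak convergence: for any fixed $g\in F^2_\phi$ (or more precisely, for $g$ in a dense subset such as finite linear combinations of kernels $K(\cdot,w)$), the pairing $\langle g, k_z\rangle = g(z)/\sqrt{K(z,z)}$ tends to $0$ as $z\to\infty$. This follows from the pointwise growth estimate for $F^2_\phi$ functions (Lemma~\ref{Lemma 19} with $p=2$ gives $|g(z)e^{-\phi(z)}|^2 \le C\rho(z)^{-2}\|g\|_{2,\phi}^2$) together with the lower bound $K(z,z)\ge C\rho(z)^{-2}e^{2\phi(z)}$ from \eqref{eq:03}, which yields $|\langle g, k_z\rangle| \le C\|g\|_{2,\phi}$ uniformly and, combined with density, the desired limit; since $\{k_z\}$ is norm-bounded in $F^2_\phi$, a standard $\varepsilon/3$ argument upgrades this to weak convergence against all of $F^2_\phi$.

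Next, recall that $\widetilde{T}(z) = \langle Tk_z, k_z\rangle$. The main step is the standard fact that a compact operator maps weakly convergent sequences to norm-convergent ones: since $k_z \rightharpoonup 0$ weakly, compactness of $T$ gives $\|Tk_z\|_{2,\phi} \to 0$ as $z\to\infty$. Then by Cauchy--Schwarz,
\begin{equation*}
	|\widetilde{T}(z)| = |\langle Tk_z, k_z\rangle| \le \|Tk_z\|_{2,\phi}\, \|k_z\|_{2,\phi} = \|Tk_z\|_{2,\phi} \longrightarrow 0,
\end{equation*}
which is exactly \eqref{berezin-trans}. One subtlety is that the statement is for $T$ compact on $F^p_\phi$ for arbitrary $0<p<\infty$, whereas the Berezin transform and the inner product $\langle\cdot,\cdot\rangle$ live on $F^2_\phi$; so I would first note that $k_z \in F^p_\phi$ for every $p$ (with norm controlled via Corollary~\ref{L-infty-norm} and the kernel estimates), and that the normalized kernels $k_{z}$ — or rather, a suitable rescaling — form a bounded family in $F^p_\phi$ that converges to $0$ uniformly on compact subsets of $\C$ as $z\to\infty$ (again by the pointwise estimate \eqref{eq:03} showing $|k_z(w)|$ decays on compacta). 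By the characterization of compactness used in Lemma~\ref{hankel-1}, $T$ compact on $F^p_\phi$ then forces $\|Tk_z\|_{p,\phi}\to 0$; but here we need $\|Tk_z\|_{2,\phi}\to 0$, so a cleaner route is to observe $T$ compact on $F^p_\phi$ implies the relevant boundedness of $T$ on $F^2_\phi$ is not automatic, hence one works directly in $F^p_\phi$.

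I expect the main obstacle to be precisely this mismatch of exponents. The honest argument for general $p$ is: $\widetilde{T}(z) = \langle Tk_z, k_z\rangle$ still makes sense because $Tk_z \in F^p_\phi \subset L^1_{\mathrm{loc}}$ pairs against $k_z$ via the reproducing formula, and one shows $|\widetilde{T}(z)| \le C\|Tk_z\|_{p,\phi}\cdot(\text{something bounded})$ using the reproducing property $\langle h, k_z\rangle = h(z)/\sqrt{K(z,z)}$ valid for $h\in F^p_\phi$ by \eqref{bergman-proj}, so that $\widetilde{T}(z) = (Tk_z)(z)/\sqrt{K(z,z)}$, and then Lemma~\ref{Lemma 19} with exponent $p$ plus \eqref{eq:03} bounds $|(Tk_z)(z)/\sqrt{K(z,z)}|$ by a constant times the average of $|Tk_z\, e^{-\phi}|^p$ over $D(z)$ to the power $1/p$, which tends to $0$ since $\|Tk_z\|_{p,\phi}\to 0$ and the averaging is uniform. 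Thus the crux is verifying $k_z \to 0$ appropriately in $F^p_\phi$ (uniform-on-compacta plus bounded norm) and invoking the compactness criterion to conclude $\|Tk_z\|_{p,\phi}\to 0$; once that is in hand, the passage to $\widetilde{T}(z)\to 0$ is a short computation via the reproducing formula and the kernel estimates already assembled in Section~\ref{preliminaries}.
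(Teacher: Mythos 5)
Your endgame---writing $\widetilde T(z)$ via the reproducing formula \eqref{bergman-proj} as a normalized point evaluation of $T$ at a normalized kernel and then controlling it with Lemma~\ref{Lemma 19} and the kernel estimates---is exactly how the paper finishes. The gap is in how you feed compactness into this: everything is reduced to the claim that $T$ compact on $F^p_\phi$ forces $\|Tk_z\|_{p,\phi}\to 0$, justified either by weak convergence of $k_z$ (your $F^2$ argument is moot anyway, since $T$ is only assumed compact on $F^p_\phi$) or by ``the characterization of compactness used in Lemma~\ref{hankel-1}''. Neither works in the only nontrivial range $0<p\le 1$ (the paper itself notes the Banach case is standard). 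First, for doubling weights the family $\{k_z\}=\{k_{2,z}\}$ is not even bounded in $F^p_\phi$ when $p\neq 2$: by Corollary~\ref{K-integral} and \eqref{eq:03} one has $\|k_z\|_{p,\phi}\simeq \rho(z)^{2/p-1}$, and $\rho$ need not be bounded above or below (e.g.\ $\phi(z)=|z|^m$ with $m\neq 2$), so one must work with $k_{z,p}$ and keep track of the factor $\|K(\cdot,z)\|_{p,\phi}/K(z,z)$, as the paper does. Second, and more seriously, even with the correct normalization the implication ``$T$ compact $\Rightarrow \|Tk_{z,p}\|_{p,\phi}\to 0$'' is false for $p\le 1$: already on the classical Fock space $F^1_\alpha$, take the rank-one (hence compact) operator $Tf=\langle f,g\rangle h$ with $g(w)=e^{\alpha w^2/2}\in F^\infty_\alpha=(F^1_\alpha)^*$ and $0\neq h\in F^1_\alpha$; then $\|Tk_{x,1}\|_{1,\alpha}\simeq |g(x)|e^{-\alpha x^2/2}\,\|h\|_{1,\alpha}\simeq\|h\|_{1,\alpha}$ along the real axis, so it does not vanish at infinity, although $\widetilde T$ of course does. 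The underlying reason is that $k_{z,p}$ fails to be weakly null in $F^p_\phi$ when $p\le 1$, and the device in Lemma~\ref{hankel-1} (bounded sequences converging to $0$ locally uniformly get mapped to norm-null sequences) is a tool for proving a \emph{specific} operator compact, not a property enjoyed by every compact operator, so it cannot be invoked in the converse direction.

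The paper avoids this issue altogether: it applies the characterization of relatively compact subsets of $F^p_\phi$ (uniformly small tails; see \eqref{eq31} and \cite{HL17, DFG2002}) to the image of the unit ball, so that compactness of $T$ yields $\sup_{\|f\|_{p,\phi}\le 1}\int_{|\xi|\ge R}|Tf(\xi)e^{-\phi(\xi)}|^p\,dA(\xi)\to 0$ as $R\to\infty$. Combining the reproducing formula with Corollary~\ref{K-integral}, \eqref{eq:03} and Lemma~\ref{Lemma 19} gives $|\widetilde T(z)|^p\le C\int_{B(z,1)}|T(k_{z,p})(\xi)e^{-\phi(\xi)}|^p\,dA(\xi)$, and since $B(z,1)$ eventually lies in $\{|\xi|\ge R\}$ as $z\to\infty$ (by \eqref{growth-of-rho}), only this local piece of $\|Tk_{z,p}\|_{p,\phi}$---not the full norm---needs to be small. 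If you replace your ``$\|Tk_z\|_{p,\phi}\to 0$'' step by this tail argument, the rest of your computation goes through.
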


\begin{proof}
When $1\le p<\infty$, the conclusion (\ref{berezin-trans}) is trivial because $F^p_\phi$ is a Banach space. Our proof here works well for all $0<p<\infty$.
As stated on page~476  of~\cite{HL17} or alternatively using the approach of Theorem~7 in~\cite{DFG2002}, a subset $E \subset F^p_\phi$ is relatively compact in $F^p_\phi$ if and only if for each $\varepsilon>0$ there is some $R>0$ such that
$$
    \sup_{f\in E} \int_{|z|\ge R} \left |f(z)e^{-\phi(z)}\right|^p dA(z)<\varepsilon.
$$
Since $T$ is compact on $F^p_\phi$, we have
 \begin{equation}\label{eq31}
\lim_{R\to \infty} \sup_{f\in F^p_\phi, \|f\|_{p, \phi}\le 1} \int_{|z|\ge R} \left |Tf(z)e^{-\phi(z)}\right|^p dA(z)=0.
    \end{equation}
Hence, by  the reproducing formula for functions in $F^p_\phi$, given in~\eqref{bergman-proj}, we have
$$
	\widetilde{T}(z)=\fr {\|K_z\|_{p, \phi}}{K(z, z)} \langle T(k_{z, p})(\cdot), K(\cdot, z)\rangle=  \fr {\|K_z\|_{p, \phi}}{K(z, z)}T(k_{z, p})(z).
$$
This and \eqref{eq31} imply
\begin{align*}
	| \widetilde{T}(z)|^p &\le C \left|\rho(z)^{\fr 2p} T(k_{z, p})(z) e^{-\phi(z)}\right|^p \\
	&\le C \int_{B(z, 1)} \left|T(k_{z})(\xi) e^{-\phi(\xi)}\right|^p dA(\xi) \to 0
\end{align*}
as $z\to \infty$. This completes the proof.
\end{proof}

\begin{theorem} \label{cor-1}
Let $0<p<\infty$ and  $f\in VO$. Then
\begin{itemize}
\item[(i)]  $T_{f-\widetilde{f}}$ is compact on $F^p_\phi$.

\item[(ii)] The Toeplitz operator $T_{f}$ is compact on $F^p_\phi$ if and only if $\widetilde{f}(z)\to 0$ as $z\to \infty$.
\end{itemize}
\end{theorem}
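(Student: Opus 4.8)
The plan is to reduce everything to the following claim, which carries the analytic weight: \emph{if $g\in VO$ and $\lim_{z\to\infty}g(z)=0$, then $T_g$ is compact on $F^p_\phi$ for every $0<p<\infty$}. Granting it, part~(i) follows because $f-\widetilde f$ satisfies both hypotheses: $(f-\widetilde f)(z)\to 0$ is exactly Lemma~\ref{LeBMO1}, and from it one also gets $\widetilde f\in VO$ (hence $f-\widetilde f\in VO$), since for $w\in B(z,r)$ the identity $\widetilde f(z)-\widetilde f(w)=(\widetilde f-f)(z)-(\widetilde f-f)(w)+(f(z)-f(w))$ gives $\omega(\widetilde f)(z)\le 2\sup_{\beta(\xi,0)\ge\beta(z,0)-r}|(\widetilde f-f)(\xi)|+\omega(f)(z)\to 0$ as $z\to\infty$, while continuity of $\widetilde f$ handles boundedness of $\omega(\widetilde f)$ elsewhere. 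For part~(ii), the ``only if'' direction is immediate from Lemma~\ref{compact-berezin}: if $T_f$ is compact then $\widetilde{T_f}(z)\to 0$, and since $Pk_z=k_z$ we have $\widetilde{T_f}(z)=\langle P(fk_z),k_z\rangle=\langle fk_z,k_z\rangle=\widetilde f(z)$, so $\widetilde f(z)\to 0$. Conversely, if $\widetilde f(z)\to 0$ then Lemma~\ref{LeBMO1} forces $f(z)\to 0$ as well, so the claim applies with $g=f$.

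To prove the claim, fix $\varepsilon>0$ and let $h_R$ be the cutoff of Lemma~\ref{compact-supported}, so $h_R\equiv 1$ on $B(0,R)$, $\|h_R\|_\infty=1$ and $\|h_R\|_{BO}<\varepsilon$. Write $g=gh_R+g(1-h_R)$, hence $T_g=T_{gh_R}+T_{g(1-h_R)}$. The first term is compact: $gh_R\in L^\infty(\C)$ has compact support, so by Lemma~\ref{hankel-1} (and the normal family argument in its proof) both the Hankel operator $H_{gh_R}$ and the multiplication $M_{gh_R}$ are compact from $F^p_\phi$ into $L^p_\phi$; thus $M_{gh_R}-H_{gh_R}$, which is $T_{gh_R}$ followed by the inclusion $F^p_\phi\hookrightarrow L^p_\phi$, is compact, and as $F^p_\phi$ is a closed subspace of $L^p_\phi$ with the same (quasi-)norm, $T_{gh_R}$ is compact on $F^p_\phi$. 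For the second term set $u=g(1-h_R)$; then $u$ is continuous, vanishes on $B(0,R)$, and an elementary estimate of the same kind as in the $VO$ part of the proof of Theorem~\ref{hankel} gives
$$
	\|u\|_\infty\le\sup_{\beta(z,0)\ge R}|g(z)|\qquad\text{and}\qquad \|u\|_{BO}\le\frac1R\sup_{\beta(z,0)\ge R-1}|g(z)|+\sup_{\beta(z,0)\ge R-1}\omega(g)(z),
$$
both tending to $0$ as $R\to\infty$ since $g\in VO$ and $g(z)\to0$. Because $u$ is bounded, the integral defining $P(ug')$ converges (being dominated by $\|u\|_\infty\,P_+(|g'|)\in L^p_\phi$, cf.\ Corollary~\ref{alsolute-projection}) and is holomorphic, so $T_ug'=ug'-H_ug'\in F^p_\phi$ for $g'\in F^p_\phi$, and Theorem~\ref{hankel}(i) yields
$$
	\|T_ug'\|_{p,\phi}\le\|ug'\|_{p,\phi}+\|H_ug'\|_{p,\phi}\le\bigl(\|u\|_\infty+C\|u\|_{BO}\bigr)\|g'\|_{p,\phi}.
$$
Taking $R$ large enough makes $\|T_{g(1-h_R)}\|_{F^p_\phi\to F^p_\phi}<\varepsilon$, so $T_g$ is a limit of compact operators in the operator (quasi-)norm and hence compact.

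The step requiring the most care is the range $0<p<1$, where $P$ need not be bounded on $L^p_\phi$ and duality is unavailable: one must verify by hand at each stage that $T_{gh_R}$ and $T_u$ actually map $F^p_\phi$ into itself and have the stated compactness or smallness, which is precisely what the quasi-Banach estimates in Lemmas~\ref{extened-Bergman-projection} and~\ref{hankel-1}, Corollary~\ref{alsolute-projection} and Theorem~\ref{hankel} provide. The only remaining point to confirm is that $\widetilde f\in VO$ whenever $f\in VO$, which, as noted above, is a short consequence of Lemma~\ref{LeBMO1}.
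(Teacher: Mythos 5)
Your argument is correct, but it reaches the key compactness statement by a different route than the paper. The paper reduces (i) to the assertion that $T_g$ is compact whenever $g\in L^\infty$ and $g(z)\to 0$, and proves that assertion by quoting Theorem~3.2 of \cite{HL17} for the compactly supported piece $g\chi_R$ and then using the bound $\|T_{g-g\chi_R}\|\le C\|g-g\chi_R\|_{L^\infty}$; in (ii) it writes $T_f=T_{\widetilde f}+T_{f-\widetilde f}$. You instead prove the (formally weaker, but here sufficient) claim for $g\in VO$ with $g(z)\to0$ entirely from the paper's own Hankel machinery: compactness of $M_{gh_R}$ and $H_{gh_R}$ from Lemma~\ref{hankel-1}, the smooth cutoff of Lemma~\ref{compact-supported}, and the norm bound of Theorem~\ref{hankel}(i) to make $\|T_{g(1-h_R)}\|\le C(\|g(1-h_R)\|_\infty+\|g(1-h_R)\|_{BO})$ small; and in the converse of (ii) you apply the claim directly to $f$ (since $\widetilde f\to0$ and Lemma~\ref{LeBMO1} give $f\to0$), avoiding $T_{\widetilde f}$ altogether. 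What your route buys is self-containedness (no appeal to \cite{HL17}, Theorem~3.2, for this step) at the cost of needing the $VO$ hypothesis and the cutoff's small $BO$ seminorm; the paper's route yields the stronger intermediate statement for merely bounded vanishing symbols, which it then reuses. The computation $\widetilde{T_f}=\widetilde f$ that you justify via $Pk_z=k_z$ is really a Fubini/self-adjointness argument (one needs $fk_z\in L^2_{2\phi}$, which follows from $|f(\xi)|\le|f(z)|+\|f\|_{BO}(\beta(z,\xi)+1)$ and Corollary~\ref{K-integral}); the paper records this as \eqref{berezin-on-T}. One small point you should not leave implicit: you invoke the continuity (and, implicitly, local boundedness) of $\widetilde f$ to place $f-\widetilde f$ in $VO$, but you never prove it; the paper does this with a short dominated-convergence argument based on Lemma~\ref{basic-est}. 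In your scheme this is harmless --- by Remark~\ref{bo-continuity} only the decay of $\omega(f-\widetilde f)$ and boundedness are actually used, and both follow from Lemma~\ref{LeBMO1} together with $|\widetilde f(z)|\le|f(z)|+C\|f\|_{BO}$ --- but a sentence to that effect (or the dominated-convergence proof of continuity) is needed to make the reduction airtight.
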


\begin{proof}
(i) First  we show  that the Berezin transform $\widetilde{f}$ is continuous on $\C$.  By Lemma~\ref{basic-est}, for $w\in B(z, 1)$,
\begin{align*}
	\left| f(\xi ) \left|k_w(\xi)\right|^2e^{-2\phi(\xi)}\right|
	&\le C \Big(|f(\xi)-f(z)|+|f(z)|\Big)  \rho(w)^{-2} e^{ - 2
     \left(\frac{|\xi-w|}{\rho(w)}\right)^{\epsilon} }\\
	&\le C \Big(\|f\|_{BO} +|f(z)|\Big) \rho(z)^{-2} e^{ -
     \left(\frac{|\xi-z|}{\rho(z)}\right)^{\epsilon} },
\end{align*}
and $\int_{\C}  e^{ - \left(\frac{|\xi-z|}{\rho(z)}\right)^{\epsilon} }dA(\xi)<\infty$. Therefore, by Lebesgue's dominated convergence theorem, it follows that
$$
	\lim_{w\to z} \int _\C f(\xi ) \left|k_w(\xi)\right|^2e^{-2\phi(\xi)} dA(\xi)=
    \int _\C f(\xi ) \left|k_z(\xi)\right|^2e^{-2\phi(\xi)}dA(\xi).
$$
Hence $f-\widetilde{f}\in C(\C)$,  which  together with Lemma \ref{LeBMO1} implies that $ f- \widetilde{f} \in L^\infty\cap VO$.

That $T_{f-\widetilde{f}}$ is compact on $F^p_\phi$ is an easy consequence of  the stronger assertion that $T_g$ is compact if $g\in L^\infty$ and $\lim_{z\to \infty} g(z)=0$. Indeed, to verify this, write $\chi_R$ for the characteristic function of $D(0, R)$.  Theorem~3.2 of~\cite{HL17} tells us that $T_{ | g |\chi_R}$ is compact on $F^p_\phi$, and so is $ T_{g \chi_R}$. Now, since $\|g-g\chi_R\|_{L^\infty} \to 0$ as $r\to \infty$, we have
$$
	\|T_{g\chi_R}-T_g\|_{F^p_\phi\to F^p_\phi} \le C \|g-g \chi_R\|_{L^\infty}\to 0
$$
as $R\to 0$, and hence $T_g$ is compact on $F^p_\phi$.

(ii) Suppose that $T_f$ is compact. Then $\lim_{z\to \infty} \widetilde{T_f}(z)=0$ by Lemma \ref{compact-berezin}. Notice that, for $f\in VO \subseteq BO$, from \eqref{bo-function} and Corollary \ref{K-integral} we obtain
 $$
   \int_\C |k_z(\xi)| e^{-2\phi(\xi)} dA(\xi) \int_\C |f(\zeta)
       k_z(\zeta)K(\xi, \zeta))| e^{-2\phi(\zeta)} dA(\zeta)<\infty.
 $$
 Then, applying  Fubini's theorem,  we get
\begin{equation}\label{berezin-on-T}
\begin{split}
	\widetilde{T_f}(z) &=   \langle T_{f}k_z, k_z  \rangle\\	
	&= \int_\C  f(\zeta) k_z(\zeta)  e^{-2\phi(\zeta)} dA(\zeta) \int_\C \overline{ k_z(\xi)}  K(\xi, \zeta)  e^{-2\phi(\xi)} dA(\xi)= \widetilde{f}(z).
\end{split}
\end{equation}
Therefore, we have $\lim_{z\to \infty} \widetilde{f}(z)=0$. The converse follows from the simple identity
$$
	T_f= T_{\widetilde{f}}+ T_{f-\widetilde{f}},
$$
and the proof is complete.
\end{proof}

Before proving our main result, we comment briefly on the previous attempt to prove it in the Hilbert space setting.

\begin{remark}\label{remark on AHV}
In Proposition~4.1 of~\cite{AHV}, the following statement was given. \textit{Let $f:\C\to\C$ be a continuous function in $A$, where $A = L^\infty \cap VO$ or $A=L^\infty\cap VMO^1$. Then $f$ is bounded away from zero on $\C\setminus {D}(0,R)$, for some $R>0$, if and only if there is a continuous function $g\in A$ such that $f(z)g(z)\to 1$ as $z\to \infty$.}

There are two problems with this statement. First, it is only known in the setting of the standard Fock spaces when the space $VO$ is defined using the Euclidean metric (see~Proposition~9 of~\cite{AV18} and Lemma~17 of~\cite{BC87}); that is, when the Bergman metric is replaced by the Euclidean metric in~\eqref{e:local oscillation1} (or, equivalently, when $D^r(z)$ is replaced by the Euclidean disk $D(z,r)$ in~\eqref{e:local oscillation2}). Second, the claim that $g$ is continuous does not seem correct in general when $f$ is only in $L^\infty\cap VMO^1$. In~\cite{AHV}, the above statement was then used to construct a regularizer of $T_f$ as follows:
$$
	T_f T_g = I + T_{fg-1} - PM_f H_g.
$$
The proof of the following theorem indicates how these problems can be avoided.
\end{remark}

We are now ready to state and prove our main result.

 \begin{theorem} \label{Fredholm}
 Let $f\in VMO^1$ and $0<p<\infty$.  Then the Toeplitz operator $T_f$ is Fredholm on $F^p_\phi$ if and only if
\begin{equation}\label{Fredhol-1}
	0<\liminf_{|z|\to \infty} |\widetilde{f}(z)|\le \limsup_{|z|\to \infty} |\widetilde{f}(z)|< \infty .
\end{equation}
\end{theorem}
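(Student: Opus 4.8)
The plan is to reduce first to a symbol in $VO$, then establish necessity via the Berezin transform and a normal‑family argument, and finally establish sufficiency by exhibiting an explicit two‑sided regularizer. \emph{Reduction to $f\in VO$.} Using the decomposition $VMO^1=VO+VA^1$ from \eqref{e:decomposition}, I would write $f=g+h$ with $g=\widetilde f\in VO$ and $h=f-\widetilde f\in VA^1$. Then $T_h$ is compact on $F^p_\phi$ (this follows from~\cite{HL17}; compare the proof of Theorem~\ref{cor-1}), so $T_f$ is Fredholm if and only if $T_g$ is. Since $\widetilde h(z)\to0$ as $|z|\to\infty$ we have $\liminf_{|z|\to\infty}|\widetilde f(z)|=\liminf_{|z|\to\infty}|\widetilde g(z)|$ and likewise for the $\limsup$; and since $g\in VO$, Lemma~\ref{LeBMO1} gives $(g-\widetilde g)(z)\to0$. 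Hence \eqref{Fredhol-1} is equivalent to requiring that $g$ be bounded and that $|g(z)|\ge\epsilon$ for some $\epsilon>0$ whenever $|z|\ge R$ (for some $R>0$), and the theorem reduces to proving the equivalence of these two conditions with the Fredholmness of $T_g$ for $g\in VO$.

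\emph{Necessity.} Assume $T_g$ is Fredholm, hence bounded. By \eqref{berezin-on-T} the Berezin transform satisfies $\widetilde{T_g}=\widetilde g$, and the pointwise bound on $\widetilde T$ obtained in the proof of Lemma~\ref{compact-berezin} yields $|\widetilde g(z)|^p\le C\|T_g(k_{z,p})\|_{p,\phi}^p\le C\|T_g\|^p$ for all $z$, so $\limsup_{|z|\to\infty}|\widetilde g(z)|<\infty$ and $g$ is bounded. For the other half I would argue by contradiction: if $\widetilde g(z_j)\to0$ along a sequence $z_j\to\infty$, then $g(z_j)\to0$ (because $g-\widetilde g\to0$), so $\|T_g(k_{z_j,p})\|_{p,\phi}\to0$ by Lemma~\ref{fredholm-lamma}. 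Composing with a bounded regularizer $S$ of $T_g$ — which exists by the Calkin‑algebra description recalled in Section~\ref{Fredholm section} when $1\le p<\infty$, and by Theorem~\ref{dual rich Fredholm} together with Lemma~\ref{dual rich Fock} when $0<p<1$ — one gets $\|k_{z_j,p}+K_1k_{z_j,p}\|_{p,\phi}\le\|S\|\,\|T_g(k_{z_j,p})\|_{p,\phi}\to0$, where $K_1=ST_g-I$ is compact. Since $\{k_{z_j,p}\}$ is bounded, $\{K_1k_{z_j,p}\}$ has a convergent subsequence, and hence so does $\{k_{z_j,p}\}$, say $k_{z_{j_k},p}\to v$ in $\|\cdot\|_{p,\phi}$. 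By Lemma~\ref{Lemma 19} point evaluations are bounded on $F^p_\phi$, so $k_{z_{j_k},p}\to v$ pointwise; on the other hand the kernel estimates of Lemma~\ref{basic-est} together with $\|K(\cdot,z)\|_{p,\phi}\simeq\rho(z)^{2/p-2}e^{\phi(z)}$ (which follows from Lemma~\ref{basic-est} and Corollary~\ref{K-integral}) force $k_{z_{j_k},p}\to0$ uniformly on compact sets as $z_{j_k}\to\infty$. Thus $v\equiv0$, contradicting $\|v\|_{p,\phi}\ge C^{-1}\limsup_k\|k_{z_{j_k},p}\|_{p,\phi}>0$.

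\emph{Sufficiency and the main obstacle.} Assume $g\in L^\infty\cap VO$ with $|g(z)|\ge\epsilon$ for $|z|\ge R$. I would fix $R'>R$ and a $C^\infty$ cutoff $\chi$ with $\chi\equiv0$ on $D(0,R)$ and $\chi\equiv1$ on $\C\setminus D(0,R')$, and set $\psi=\chi/g$ on $\{g\ne0\}$ and $\psi=0$ on $\{g=0\}$. The critical step — exactly the point at which Proposition~4.1 of~\cite{AHV} fails, see Remark~\ref{remark on AHV} — is to verify that $\psi\in L^\infty\cap VO$, where $VO$ is taken with respect to the Bergman metric: by Lemma~\ref{distance-d}, $\beta(z,w)<r$ implies $|w-z|\le C_r\rho(z)\le C_r'|z|^s$ with $s<1$, so $|w|\to\infty$ together with $|z|$ and therefore $|g(w)|\ge\epsilon$ for $|z|$ large, which gives $\omega(1/g)(z)\le\epsilon^{-2}\omega(g)(z)\to0$; boundedness of $\psi$ is clear. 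Since $g\psi=\chi=1+(\chi-1)$ with $\chi-1$ bounded and compactly supported, $T_{g\psi}=I+T_{\chi-1}$ with $T_{\chi-1}$ compact (as in the proof of Theorem~\ref{cor-1}). The reproducing identity \eqref{bergman-proj} then yields, on $F^p_\phi$,
$$
	T_\psi T_g=T_{g\psi}-PM_\psi H_g,\qquad T_gT_\psi=T_{g\psi}-PM_g H_\psi,
$$
so it remains to show that $PM_\psi H_g$ and $PM_g H_\psi$ are compact on $F^p_\phi$. Here $H_g$ and $H_\psi$ are compact from $F^p_\phi$ to $L^p_\phi$ by Theorem~\ref{hankel}, and $M_\psi,M_g$ are bounded on $L^p_\phi$; since $P$ is bounded on $L^p_\phi$ for $1\le p<\infty$, compactness follows at once in that range. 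For $0<p<1$, where $P$ need not be bounded on $L^p_\phi$, I would split the Hankel output (which, by Lemma~\ref{Lemma 19}, is uniformly bounded on compact sets for functions in the unit ball of $F^p_\phi$) into a part supported on a large fixed disk — which gives a compact operator via a normal‑family argument combined with the rapid decay of $K(z,\cdot)$ on compact sets as $|z|\to\infty$ from Lemma~\ref{basic-est} and Corollary~\ref{K-integral} — and a tail of small $L^p_\phi$‑norm, estimated as in Lemma~\ref{extened-Bergman-projection}. Then $S=T_\psi$ is a two‑sided regularizer, and $T_g$ is Fredholm by the Calkin‑algebra characterization for $1\le p<\infty$ and by Theorem~\ref{dual rich Fredholm} for $0<p<1$. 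I expect the delicate points to be concentrated in this last part: the \emph{correct} construction and verification of the regularizer in the Bergman‑metric geometry (the gap in~\cite{AHV}), and the quasi‑Banach bookkeeping required when $0<p<1$, where $P$ is no longer bounded on $L^p_\phi$.
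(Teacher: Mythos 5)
Your overall route coincides with the paper's: the reduction to a $VO$ symbol via $f=\widetilde f+(f-\widetilde f)$ and compactness of the $VA^1$ part, the bound $|\widetilde f(z)|\le C\|T_f\|_{F^p_\phi\to F^p_\phi}$ for the $\limsup$, the contradiction argument via Lemma~\ref{fredholm-lamma} together with a regularizer (Theorem~\ref{dual rich Fredholm} and Lemma~\ref{dual rich Fock} when $0<p<1$), and a regularizer $T_\psi$ built from a cut-off inverse of the Berezin transform, whose vanishing oscillation with respect to the Bergman metric you verify exactly as one must (via Lemma~\ref{distance-d} and $\rho(z)\le C|z|^s$, $s<1$) — this is the same repair of the gap in \cite{AHV} that the paper makes. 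Your ending of the necessity direction is a legitimate variant: where the paper evaluates the Berezin transform of $ST_{\widetilde f}$ and invokes Lemma~\ref{compact-berezin}, you extract a norm-convergent subsequence of $k_{z_j,p}$ from the compactness of $K_1=ST_g-I$ and contradict the locally uniform decay of $k_{z_j,p}$; this works because $\|K(\cdot,z)\|_{p,\phi}\simeq\rho(z)^{2/p-2}e^{\phi(z)}$ (Lemma~\ref{basic-est} and Corollary~\ref{K-integral}) and the kernel estimate with $\rho(z)=o(|z|)$ indeed force $k_{z,p}\to 0$ uniformly on compact sets.

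The one genuine gap is the compactness of $PM_\psi H_g$ (and $PM_gH_\psi$) when $0<p<1$. For $1\le p<\infty$ your argument is complete, but for $p<1$ the splitting you sketch does not close: after writing $\psi H_g h$ as a piece supported in a fixed disk plus a tail of small $L^p_\phi$-norm, you must still apply $P$ to the tail, and smallness of the $L^p_\phi$-norm of a \emph{non-holomorphic} function gives no control of $\|P(\cdot)\|_{p,\phi}$, precisely because $P$ is unbounded on $L^p_\phi$ for $p<1$; moreover the estimate \eqref{extened-Bergman-projection-1} in Lemma~\ref{extened-Bergman-projection}, which you invoke for the tail, rests on Lemma~\ref{Lemma 19} and hence applies only to holomorphic inputs (for the same reason, $H_gh$ itself is not covered by Lemma~\ref{Lemma 19}). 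The paper disposes of this step differently, by asserting boundedness of $PM_{\widetilde f}$ from $L^p_\phi$ to $F^p_\phi$ together with compactness of $H_g$. A repair within your scheme is to treat $PM_\psi H_g$ as a single integral operator acting on the holomorphic input: by the reproducing property, $PM_\psi H_gh(z)=\int_\C g(w)h(w)e^{-2\phi(w)}G(z,w)\,dA(w)$ with $G(z,w)=\int_\C K(z,\xi)\bigl(\psi(w)-\psi(\xi)\bigr)K(\xi,w)e^{-2\phi(\xi)}\,dA(\xi)$, so that $|G(z,w)|$ is dominated by the oscillation of $\psi$ times $(\beta(\xi,w)+1)|K(z,\xi)||K(\xi,w)|$, which Corollary~\ref{K-integral} integrates out; one can then run the lattice/subharmonicity argument of Section~3 on the holomorphic factor $h$ and use the vanishing of $\omega(\psi)$ at infinity to get compactness. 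Without an argument of this kind (or the paper's), your sufficiency direction is incomplete for small exponents.
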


\begin{proof}  According to~\eqref{e:decomposition}, we have $VMO^1  = VO+VA^1 $, that is, $f\in VMO^1 $ if and only if there are functions $f_1\in VO$ and $f_2\in VA ^1$ such that
\begin{equation}\label{vmo-a}
  f=f_1+f_2.
\end{equation}
Set $\mu= |f_2|\, dA$. Then
$$
	\lim_{z\to \infty} \fr 1 {|D (z)|} \int_{D (z)} d\mu =0,
$$
which means that $\mu$ is a vanishing Fock-Carleson measure. Thus, Theorem 3.3 of \cite{HL17} (see also Theorem 4.1 of  \cite{OP15} for the case $1\le p<\infty$) yields
\begin{equation} \label{Fredhol-a}
	\lim_{z\to \infty} \widetilde{|f_2|}(z) = 0.
\end{equation}
Then,  $T_{f_2}$ is compact on $F^p_\phi$ for all possible $0<p<\infty$. Consequently, $T_f$ is Fredholm if and only if $T_{f_1}$ is Fredholm. Furthermore, \eqref{Fredhol-a} implies
\begin{equation}\label{vmo-b}
	\liminf_{|z|\to \infty} |\widetilde{f}(z)|  =\liminf_{|z|\to \infty} |\widetilde{f_1}(z)|\, \textrm{ and }\,   \limsup_{|z|\to \infty} |\widetilde{f}(z)|  =\limsup_{|z|\to \infty} |\widetilde{f_1}(z)|.
\end{equation}
Therefore, we need only to prove the desired conclusion for symbols in $VO$.

Now we suppose $f \in VO$ and that   $T_{f }$ is Fredholm on $F^p_\phi$. Then,  $T_{f }$ is  bounded on $F^p_\phi$. We treat the case  $0< p\le 1$ first.
By  Lemma~\ref{basic-est}  and Corollary \ref{L-infty-norm}, we have
$$
      \left \|\rho^{2-\fr 2p}(\cdot) \left(\rho^{\fr 2p -1}(z)\,   k_{z}(\cdot) \right )\right \|_{\infty, \phi}\le C
$$
for all $z\in \C$. Notice that
$$
	\left |\langle T_{f} k_z, k_z\rangle\right| \simeq\left |\langle T_{f} k_{z,p},  \rho^{\fr 2p -1}(z) \,  k_{z}\rangle\right|.
$$
Then by \eqref{berezin-on-T}, as shown in the proof of Theorem 5.1 from \cite{LZZ19},
$$
	|\widetilde{f}(z)|
	\le C \| T_{f} k_{z,p}\|_{p, \phi} \, \left \|\rho^{2-\fr 2p}(\cdot) \left(\rho^{\fr 2p -1}(z)\,   k_{z}(\cdot) \right )\right \|_{\infty, \phi}
	\le C \|T_{f}\|_{F^p_\phi\to F^p_\phi}.
$$
Hence,
\begin{equation}\label{vmo-j}
	\limsup_{z\to \infty}  |\widetilde{f}(z)|<\infty.
\end{equation}
Now for $1< p<\infty$, using \eqref{berezin-on-T} and H\"older's inequality, we have $|\widetilde{f}(z)|\le C \|T_{f}\|_{F^p_\phi\to F^p_\phi}$.

If $\liminf_{z\to \infty}  |\widetilde{f}(z)|>0$ were not true, we would have  some sequence $\{z_j\}_{j=1}^\infty$ in $\C$ such that $\lim_{j\to \infty} z_j=\infty$ and $ \lim_{j\to \infty} \widetilde{f}(z_j)=0$. According to Lemma\ref{fredholm-lamma},
$$
	\lim_{j\to \infty} \|T_{\widetilde{f}}(k_{z_j, p})\|_{p, \phi} =0,
$$
which implies that
$$
	\lim_{j\to \infty} \left \|\left( S T_{\widetilde{f}}\right) k_{z_j, p} \right \|_{p, \phi} =0
$$
for any bounded operator $S$ on $F^p_\phi$. Notice that
$$
	\left| \left(S T_{\widetilde{f}}\right)^{\widetilde{ } }(z_j)\right| 
	\simeq \left |\left \langle\left( S T_{\widetilde{f}}\right) k_{z_j ,p} ,  \rho^{\fr 2p -1} (z_j)\,  k_{z_j}\right \rangle\right|  \ \ \textrm{ if } \ 0<p\le 1,
$$
and
$$
	\left| \left(S T_{\widetilde{f}}\right)^{\widetilde{ } }(z_j)\right|
	\simeq \left |\left \langle\left( S T_{\widetilde{f}}\right) k_{z_j ,p} ,    k_{z_j, p'}\right \rangle\right|   \ \ \textrm{ if } \ 1<p< \infty,
$$
where $p'$ is the conjugate exponent of $p$. Thus,
\begin{equation}\label{vmo-d}
	\lim_{j\to \infty}\left| \left(S T_{\widetilde{f}}\right)^{\widetilde{ } }(z_j)\right|=0
\end{equation}
On the other hand, by Theorem~\ref{cor-1}, we know that $T_{\widetilde{f}}= T_{f}  + T_{\widetilde{f}-f} $ is also Fredholm on $F^p_\phi$, and so we can apply Lemma ~\ref{dual rich Fock} and Theorem~\ref{dual rich Fredholm} to get some bounded linear operator $S$ on $F^p_\phi$ such that
\begin{equation*}\label{vmo-c}
	S T_{\widetilde{f}}=I +K,
\end{equation*}
where $I$ is the identity operator and $K$ is some compact operator on $F^p_\phi$. By Lemma \ref{compact-berezin}, we have $\lim_{z\to \infty}\widetilde{K}(z)=0$ and hence
$$
	\lim_{j\to \infty} \left| {\left( S T_{\widetilde{f}}\right)}^{ \widetilde{  }}({z_j})\right| 
	\ge 1 - \lim_{j\to \infty}  \left|\widetilde{K}({z_j})\right| =1,
$$
which contradicts \eqref{vmo-d}. This completes the proof of the necessary condition.

Conversely, suppose that $f\in VO$ and $\widetilde{f}$ satisfies \eqref{Fredhol-1}. Then there are positive constants $R$, $c$ and $C$ such that
\begin{equation}\label{vmo-z}
	c\le |\widetilde{f}(z)|\le C
\end{equation}
for $|z|\ge R$. On the other hand, it follows easily from Lemma \ref{LeBMO1} that $\widetilde{f}\in VO\cap L^\infty$. We define a function $g$ on $\C$ by
$$
	g(z)=
	\begin{cases}
	\ 0,\verb#         # |z|<R; \\
	\fr  1 {\ \widetilde{f}\ }, \verb#        # |z|\ge R.
\end{cases}
$$
Then
$$
	g\in  L^\infty \quad \textrm{and} \quad \omega(g)(z) \le \fr 1{c^2} \omega(\widetilde{f})(z)\to 0
$$
as $z\to \infty$. Using Theorem~\ref{hankel} and Remark \ref{bo-continuity}, we see  that $H_g$ is compact from $F^p_\phi$ to $L^p_\phi$. Furthermore,
$$
	\widetilde{f}(z)g(z)=
	\begin{cases}
	0,\verb#        # |z|<R; \\
	1 , \verb#        # |z|\ge R.
\end{cases}
$$
Therefore, $T_{\widetilde{f}g }=I- T_{\chi_R}$ on $F^p_\phi$. Thus,
\begin{align*}
	T_{\widetilde{f}} T_g &= PM_{\widetilde{f}} PM_g\\
	&=PM_{\widetilde{f}} \left[I-(I-P)\right]M_g\\
	&= T_{\widetilde{f}g}-PM_{\widetilde{f}} H_g\\
	&= I - T_{\chi_R} -PM_{\widetilde{f}} H_g= I+K_1,
\end{align*}
where $K_1= - T_{\chi_R}-PM_{\tilde f} H_g$ is compact on $F^p_\phi$ because $PM_{\tilde f}$ is clearly bounded from $L^p_\phi$ to $F^p_\phi$. Similarly, $ T_g T_{\widetilde{f}}= I+ K_2$ for some compact operator $K_2$   on $F^p_\phi$. We conclude that $T_{\widetilde{f}}$ is Fredholm and the proof is complete.
\end{proof}

\begin{corollary}\label{ess spect}
Let $0<p<\infty$ and $f\in VMO^1$. If~\eqref{Fredhol-1} holds, then
\begin{equation}\label{e:ess spect}
	\sigma_{\rm ess}(T_f) = \bigcap_{R>0} \overline{\tilde f(\C^n\setminus B(0,R))}
\end{equation}
and the essential spectrum $\sigma_{\rm ess}(T_f)$ is connected.
\end{corollary}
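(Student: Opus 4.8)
The plan is to read off $\sigma_{\rm ess}(T_f)$ from Theorem~\ref{Fredholm} applied to the translated symbols $f-\lambda$, and then to identify the resulting set with the set of cluster values of the Berezin transform $\widetilde f$ at infinity. First, since constants belong to $VMO^1$ and, by the reproducing property~\eqref{bergman-proj}, $T_c=cI$ for every constant $c$, we have $T_f-\lambda I=T_{f-\lambda}$ with $f-\lambda\in VMO^1$ and $\widetilde{f-\lambda}=\widetilde f-\lambda$. Under the standing hypothesis~\eqref{Fredhol-1}, Theorem~\ref{Fredholm} guarantees that $T_f$ is Fredholm, in particular bounded, so $T_{f-\lambda}$ is bounded for every $\lambda\in\C$. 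Hence $\lambda\in\sigma_{\rm ess}(T_f)$ if and only if $T_{f-\lambda}$ fails to be Fredholm on $F^p_\phi$, and by Theorem~\ref{Fredholm} this happens precisely when~\eqref{Fredhol-1} fails for $\widetilde f-\lambda$.

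The next step is to observe that the $\limsup$ half of~\eqref{Fredhol-1} is automatic for every translate $\widetilde f-\lambda$. Writing $f=f_1+f_2$ with $f_1\in VO$ and $f_2\in VA^1$ as in~\eqref{vmo-a}, a standard application of Lemma~\ref{basic-est} and dominated convergence, as in the proof of Theorem~\ref{cor-1}(i), shows $\widetilde f\in C(\C)$; moreover~\eqref{Fredhol-a}, \eqref{vmo-b}, Lemma~\ref{LeBMO1} and~\eqref{Fredhol-1} together yield $\widetilde f\in VO\cap L^\infty$. In particular $\limsup_{|z|\to\infty}|\widetilde f(z)-\lambda|\le\|\widetilde f\|_{L^\infty}+|\lambda|<\infty$ for every $\lambda$, so~\eqref{Fredhol-1} for $\widetilde f-\lambda$ reduces to $\liminf_{|z|\to\infty}|\widetilde f(z)-\lambda|>0$. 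Combining this with the first step gives
\[
	\sigma_{\rm ess}(T_f)=\Bigl\{\lambda\in\C:\ \liminf_{|z|\to\infty}|\widetilde f(z)-\lambda|=0\Bigr\}=\bigcap_{R>0}\overline{\widetilde f\bigl(\C\setminus D(0,R)\bigr)},
\]
the last equality being the standard description of the cluster set of $\widetilde f$ at infinity: a point $\lambda$ lies in the intersection on the right if and only if there are points $z_j$ with $|z_j|\to\infty$ and $\widetilde f(z_j)\to\lambda$. Finally, by Lemma~\ref{distance-d} the families $\{B(0,R)\}_{R>0}$ of Bergman balls and $\{D(0,R)\}_{R>0}$ of Euclidean disks centred at the origin are mutually cofinal under inclusion, so $\C\setminus D(0,R)$ may be replaced by $\C\setminus B(0,R)$ in the intersection without changing its value; this is precisely~\eqref{e:ess spect}, with $\C^n$ read as $\C$.

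For the connectedness assertion I would use the Euclidean representation just obtained. For each $R>0$ the set $\C\setminus D(0,R)=\{z:|z|\ge R\}$ is connected, so its continuous image $\widetilde f(\C\setminus D(0,R))$ is connected, and hence so is its closure; this closure is nonempty, and, since $\widetilde f\in L^\infty$, it is closed and bounded and therefore compact. As $R$ increases these compact connected sets decrease, and a decreasing intersection of nonempty compact connected subsets of a Hausdorff space is again nonempty, compact and connected. Thus $\sigma_{\rm ess}(T_f)$ is connected (and, as noted, a nonempty compact set).

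All the analytic substance is already contained in Theorem~\ref{Fredholm}, so I do not anticipate a genuine obstacle. The points that require some care are the verification that $\widetilde f\in C(\C)\cap L^\infty$ --- which is what makes the $\limsup$ condition vacuous for every translate and the cluster set compact with connected pieces --- and the elementary cofinality of the Bergman and Euclidean balls about the origin; the topological fact that a decreasing intersection of continua is a continuum is standard and can be invoked without proof.
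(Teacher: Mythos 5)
Your argument is exactly the paper's (one-line) proof, expanded: apply Theorem~\ref{Fredholm} to $T_f-\lambda I=T_{f-\lambda}$, note the $\limsup$ condition is automatic, identify the non-Fredholm set with the cluster set of the continuous function $\widetilde f$ at infinity, and get connectedness from a nested intersection of continua. The details you add (boundedness of $\widetilde f$, cofinality of Bergman and Euclidean balls, the compact-connected intersection lemma) are correct and consistent with the paper's intent, so there is nothing to fix.
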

\begin{proof}
The previous theorem gives the description in~\eqref{e:ess spect} and the connectedness follows from this because $\tilde f$ is continuous.
\end{proof}

\section{Open problems and conclusions}\label{open problems and conclusions}

Our work completes the study of Fredholm Toeplitz operators on the most general weighted Fock spaces $F^p_\phi$ where their boundedness and compactness are well understood but only in dimension one. As our work makes use of a number of results, for example, on the reproducing kernel and the Bergman distance that are only known in dimension one, it seems that the generalization to the higher dimensions is an onerous task. However, we still think that the ideas presented in our current work may well serve as a blueprint for such generalizations and we conjecture that our main result remains true for doubling Fock spaces over $\C^n$. It is worth noting that pointwise estimates for the weighted Bergman kernel in several complex variables was recently obtained in~\cite{D15}, which may serve as (partial) substitutes for the estimates used in our present work.

Perhaps another suitable starting point for further study is the notion of Fock spaces $\mathcal A^2(\Psi)$ in $\C^n$ defined with logarithmic growth functions $\Psi$; for further details about these types of Fock spaces, see~\cite{SY13}. In particular, we note that both the spaces $F^p_\phi$ and $\mathcal A^p(\Psi)$ serve as generalizations of the classical  Fock space that have potential to stimulate further interest in function-theoretic operator theory and interplay between several branches of analysis.

\end{document}